\DeclareFontFamily{U}{russian}{}
\DeclareFontShape{U}{russian}{m}{n}
        { <5><6> wncyr5
        <7><8><9> wncyr7
        <10><10.95><12><14.4><17.28><20.74><24.88> wncyr10 }{}
\DeclareSymbolFont{Russian}{U}{russian}{m}{n}
\DeclareSymbolFontAlphabet{\mathcyr}{Russian}
\let\@math@cyr\mathcyr
\renewcommand{\mathcyr}[1]{\@math@cyr{\cyracc #1}}
\newcommand{\bL}{\mathbf{L}}
\newcommand{\bR}{\mathbf{R}}
\newcommand{\caC}{{\mathcal C}}
\newcommand{\caO}{{\mathcal O}}
\newcommand{\caL}{{\mathcal L}}
\newcommand{\integers}{\mathbf{Z}}
\newcommand{\naturals}{\mathbf{N}}
\newcommand{\bG}{\mathbb{G}}
\newcommand{\bQ}{\mathbb{Q}}
\newcommand{\Hom}{\mathrm{Hom}}
\newcommand{\Ho}{\mathsf{Ho}}
\newcommand{\uHom}{\underline{\mathrm{Hom}}}
\newcommand{\Ab}{\mathrm{\bf Ab}}
\newcommand{\sAb}{\mathrm{\bf sAb}}
\newcommand{\Set}{\mathrm{\bf Set}}
\newcommand{\sSet}{\mathrm{\bf sSet}}
\newcommand{\colim}{\mathrm{colim}}
\newcommand{\holim}{\mathrm{holim}}
\newcommand{\hocolim}{\mathrm{hocolim}}
\newcommand{\op}{\mathrm{op}}
\newcommand{\Mod}{\mathrm{Mod}}
\newcommand{\Spec}{\mathrm{Spec}}
\newcommand{\LQ}{\mathsf{L}\mathbf{Q}}
\newcommand{\Cpx}{\mathsf{Cpx}}
\theoremstyle{theoremstyle}
\newtheorem{theorem}{Theorem}[section]
\newtheorem*{theorem*}{Theorem}
\newtheorem{lemma}[theorem]{Lemma}
\newtheorem{proposition}[theorem]{Proposition}
\newtheorem*{proposition*}{Proposition}
\newtheorem{corollary}[theorem]{Corollary}
\newtheorem*{corollary*}{Corollary}
\newtheorem*{conjecture*}{Conjecture}
\newtheorem{definition}[theorem]{Definition}
\newtheorem{definition*}{Definition}
\newtheorem{remark}[theorem]{Remark}
\newtheorem{remark*}{Remark}
\newcommand{\A}{\mathbb{A}}
\newcommand{\C}{\mathsf{C}}
\newcommand{\D}{\mathsf{D}}
\newcommand{\Q}{\mathbb{Q}}
\newcommand{\PP}{\mathbf{P}}
\newcommand{\MZ}{\mathsf{M}\mathbf{Z}}
\newcommand{\unit}{\mathbf{1}}
\newcommand{\id}{\mathrm{id}}
\renewcommand{\S}{\mathbb{S}}
\newcommand{\DM}{\mathsf{DM}}
\newcommand{\DMT}{\mathsf{DMT}}
\newcommand{\gm}{\mathrm{gm}}
\newcommand{\cart}{\mathrm{cart}}
\newcommand{\Alg}{\mathrm{Alg}}
\newcommand{\Tot}{\mathrm{Tot}}
\newcommand{\Perf}{\mathrm{Perf}}
\newcommand{\Total}{\mathrm{Total}}
\newcommand{\UC}{\mathrm{UC}}
\newcommand{\ModCat}{\mathsf{ModCat}}
\newcommand{\Sect}{\mathsf{Sect}}
\newcommand{\ev}{\mathrm{ev}}
\newcommand{\Rep}{\mathrm{Rep}}
\newcommand{\perf}{\mathrm{perf}}
\newcommand{\str}{\mathrm{str}}
\newcommand{\im}{\mathrm{im}}
\newcommand{\B}{\mathcyr B}
\title{Derived fundamental groups for Tate motives}
\author{Markus Spitzweck}
\date{\today}
\subjclass{14F35, 14F42}
\keywords{motivic fundamental groups, Tate motives, derived group schemes}
\begin{document}

\pagestyle{plain}
\maketitle

\begin{abstract}
We construct derived fundamental group schemes
for Tate motives over connected smooth schemes over fields.
We show that there exists a pro affine derived group scheme
over the rationals such that its category of perfect representations
models the triangulated category of rational mixed Tate motives.
Under a hypothesis which is
weaker than an integral version of the Beilinson-Soul$\acute{\mathrm{e}}$ vanishing
conjecture we show that there is an affine derived group scheme
over the integers such that its perfect representations model Tate
motives with integral coefficients. The hypothesis is for
example fulfilled for number fields.
This generalizes previous non-derived
constructions of fundamental group schemes
for Tate motives with rational coefficients.
\end{abstract}

\tableofcontents

\section{Introduction}

The main purpose of this paper is to furnish an unconditional construction 
of fundamental group schemes for triangulated rational mixed Tate motives of smooth connected
schemes over fields. To our knowledge this construction is the first one with the property
that the fundamental group models Tate motives over such general base schemes.
In order to achieve this we shall make use
of a derived formalism which enables us to avoid previous
assumptions such as the Beilinson-Soul$\acute{\mathrm{e}}$ vanishing
conjecture or the $K(\pi,1)$-conjecture.

It turns out that in general we have to consider pro-objects in the category
of affine derived group schemes in order for the representation category
to model the Tate motives in a correct way. We will explain this more detailed
later in the introduction.

Under an integral vanishing assumption which is weaker than an integral
version of the Beilinson-Soul$\acute{\mathrm{e}}$ vanishing
conjecture we show that there is an affine derived group scheme over
the {\em integers} modelling Tate motives.

This applies in particular to Tate motives over a number field. The
resulting group scheme over the integers can be thought
of as a natural integral structure on the usual (non-derived) rational Tate motivic fundamental
group of such a field.

Let us recall in which environment our considerations take place.
Triangulated and abelian categories of mixed Tate motives
have been constructed in different ways
\cite{bloch.lie}, \cite{bloch-kriz}, \cite{km}, \cite{deligne-goncharov},
\cite{voevodsky.triangulated}, \cite{cisinski-deglise}.
One possibility is to start with a triangulated category of mixed motives $\DM(S)$
over the base scheme $S$ and to consider the full triangulated
subcategory generated by the Tate motives (see e.g. \cite{roendigs-oestvar.modules}).
In general it is still unclear
which construction of $\DM(S)$ is the correct one,
i.e. for arbitrary base schemes such as $\Spec(\integers)$ with general coefficients such as the
integers. 

In \cite{km} Tate motives with integer coefficients over a field $k$ have
been constructed by considering module categories over Adams graded
(i.e. possessing an additional grading)
$E_\infty$-algebras in the category of chain complexes of abelian groups
which come from cycle complexes with partial
multiplications. It is still not settled if the resulting triangulated
categories fully embed into $\DM(k)$ as the Tate objects. 
In \cite{spitzweck.per} we gave another construction of Adams graded $E_\infty$-algebras
together with an embedding of the resulting module category
into $\DM(k)$.

We switch now to rational coefficients. In this case embeddings
into $\DM(k)_\bQ$ of module categories over rational cycle dga's have
already been constructed in \cite{spitzweck-nistech} and \cite{spitzweck-mot},
see \cite[II.5.5.4, Th. 111, II.5.5.5]{levine.survey-mixed-motives} for a summary.

In the case where the Beilinson-Soul$\acute{\mathrm{e}}$ vanishing
conjecture holds for the field $k$ the rational triangulated category
of mixed Tate motives over $k$ admits a $t$-structure, see \cite{levine.tate-vanishing}.
It is possible to describe the heart of this $t$-structure
as the representations of an affine group scheme over $\bQ$,
see \cite{km}. In case the $K(\pi,1)$-conjecture holds for
the field $k$ the triangulated category of Tate motives is
then the derived category of this abelian representation category
(we note that we do not distinguish between big and small categories
of Tate motives in this introduction).

In this article we want to generalize this picture in two different
ways: First we want to describe unconditionally the full triangulated category as
representation category and second we want to get rid of rational
coefficients. 

For the first point it is necessary in general to pass from group
schemes to so-called derived group schemes and pro derived group schemes. We shall explain
these notions below. It turns out that the same procedure also settles
the second issue if our weak vanishing assumption is fulfilled, see further below.

Since we work intrinsically derived it is also not necessary
that a version of the $K(\pi,1)$-conjecture holds. Moreover
our approach builds on the particular case where a vanishing result holds
which is weaker than the Beilinson-Soul$\acute{\mathrm{e}}$ 
vanishing conjecture. We will state this condition further below
in the introduction. In this special case a description of
Tate motives as representations is possible without passing to
pro objects (on the side of the group schemes) and with integral coefficients.

Let us recall how the group scheme
whose representation category is the heart of the above mentioned $t$-structure
is constructed. 
One starts with an augmented (additionally Adams graded) commutative
dga $A$ over $\Q$ modelling Tate motives. The Bar construction $B(\bQ,A,\bQ)$, using the augmentation,
gives a simplicial commutative Adams graded cdga, and the total complex
inherits the structure of an Adams graded dg Hopf algebra. The zeroth
cohomology of this object is the Hopf algebra representing the (non-derived)
group scheme known from e.g. \cite{km}.

The algebra underlying this total object has the natural
description as a derived tensor product $\bQ \otimes^\bL_A \bQ$,
and in fact it is the first stage in the $\check{\mathrm{C}}$ech resolution of the
augmentation $A \to \bQ$. We refer to \cite{wojtkowiak} for
for the geometric intuition of cosimplicial path spaces underlying
this construction.

The step to produce a truly derived group scheme is to consider
the full $\check{\mathrm{C}}$ech resolution as a cosimplicial algebra, i.e.
the object $[n] \mapsto B^n:=\bQ^{\otimes^\bL_A (n+1)}$. 
A representation of this derived group scheme is then a cosimplicial
$B^\bullet$-module $M^\bullet$ which is homotopy cartesian,
i.e. for any map $\varphi \colon [n] \to [m]$ in $\bigtriangleup$
the natural map $M^n \otimes^\bL_{B^n} B^m \to M^m$ is an equivalence.

This picture has an immediate generalization to integral coefficients.
Then instead of working with strictly commutative dga's one has to
work with $E_\infty$-algebras.

Let us first briefly indicate what an affine derived group scheme is in general
over an $E_\infty$-algebra $A$. It is defined to be a cosimplicial $A$-algebra
$B^\bullet$ which satisfies three conditions:  i) the so-called Segal
conditions, i.e. the natural maps
$(B^1)^{\otimes^\bL_A n} \to B^n$ are equivalences, ii) a condition for
being group-like, iii) the map $A \to B^0$ is
an equivalence.

We will construct an Adams graded affine
derived group scheme in the following way:
we start with an augmentated Adams graded $E_\infty$-algebra $A$ modelling
Tate motives over some given base (with integral coefficients). This
has a natural augmentation $A \to \unit$. We view the map
$\Spec(\unit) \to \Spec(A)$ as a covering of $\Spec(A)$ and build
the cosimplicial algebra $[n] \mapsto B^n := \unit^{\otimes^\bL_A (n+1)}$
which can be considered as the $\check{\mathrm{C}}$ech resolution of this covering. 
The cosimplicial $A$-algebra $B^\bullet$ is then an affine derived
group scheme. As above we can talk about representations of this
group scheme. 

A representation $M^\bullet$ of $B^\bullet$ will be called
perfect if every $M^n$ is a perfect $B^n$-module, or
equivalently if $M^0$ is a perfect $B^0$-module. 
We denote the category of perfect represenations of $B^\bullet$
by $\Perf(B^\bullet)$.

We need an additional notion for $A$ to formulate our intermmediate result.
For an Adams graded object $X$ let us denote by $X(k)$ the part
of $X$ sitting in Adams degree $k$.
Since $A$ models Tate motives it will be of the sort that
$A(k) \simeq 0$ for $k > 0$ and the unit map $\integers \to A(0)$
is an equivalence. We will say that $A$ is of bounded Tate type
if each complex $A(k)$, $k<0$, is cohomologically bounded from below.

Our first main theorem states then that if $A$ is of
bounded Tate type then the category of perfect 
representations of $B^\bullet$ is equivalent to the category
of perfect $A$-modules.

This can be viewed as the statement that the map
$\Spec(\unit) \to \Spec(A)$ is really a covering in the sense
that it satisfies descent for perfect modules.

These statements take place in the world of Adams
graded complexes. By proposition (\ref{j-equiv})
representations of $\bG_m$ are Adams graded chain complexes,
so we can consider the semi-direct product of our affine
derived group scheme $B^\bullet$ with $\bG_m$ to obtain
a group scheme whose perfect representations in chain complexes
gives back the triangulated (or $\infty$-) category of mixed Tate 
motives over the given base. 

In the general case, i.e. where our weakened version of the
Beilinson-Soul$\acute{\mathrm{e}}$ vanishing conjecture
is not satisfied, we have to consider a pro-system of
affine derived group schemes over the rationals which arises in the followig way:
We write an Adams graded cdga $A$
modelling Tate motives over the rationals as the filtered (homotopy)
colimit of algebras $A_i$ which are finitely presented. For each of the $A_i$
we built the affine derived group scheme $B_i^\bullet$ as above.
This defines a pro affine derived group scheme $\text{``$\lim_i$''} B_i^\bullet$.
The category of perfect representations $\Perf(\text{``$\lim_i$''} B_i^\bullet)$
of $\text{``$\lim_i$''} B_i^\bullet$ is defined
to be the colimit of the $\Perf(B_i^\bullet)$.
Our second main theorem states that $\Perf(\text{``$\lim_i$''} B_i^\bullet)$ is equivalent
to the category of geometric mixed Tate motives as tensor
triangulated categories.

We remark that this relationship can be compared to the envisaged
theory of derived Tannakian duality of \cite{toen.habil}.
We are able to write a given category as the category
of perfect representations of a (pro) affine derived group scheme.
We note that contrary to the conditions stated in loc. cit.
we do not need the existence of a $t$-structure.
We note that the statements of loc. cit. are still conjectural.

{\bf Acknowledgements.} I thank Paul-Arne {\O}stv{\ae}r and the university of Oslo
for the hospitality which I enjoyed during the preparation of this text.
I thank Peter Arndt for helpful suggestions and discussions.

\section{Statement of the main results}

Let $k$ be a field, $X$ a smooth connected $k$-scheme
(of finite type). Then the triangulated category $\DM(X)$ of motives over $X$
is defined (see \cite[Definition 10.1.1]{cisinski-deglise}).

We denote by $\DMT(X)$ the full triangulated subcategory of $\DM(X)$ generated by the
$\integers(i)$, $i \in \integers$, and closed under sums. Also let $\DMT_\gm(X)$ be the 
full triangulated subcategory of $\DMT(X)$ generated by the
$\integers(i)$.

We denote by $\DM(X)_R$, $\DMT(X)_R$ and $\DMT_\gm(X)_R$ the versions
with $R$-coefficients for a commutative ring $R$.

Our main theorem for general bases and with rational coefficients
reads as follows:

\begin{theorem} \label{main1-thm}
There is a pro affine derived group scheme
$\text{\rm ``$\lim_i$''} B_i^\bullet$ over $\Q$ (for the definition
of affine derived group scheme see definition (\ref{defi-derived-affine-gr-scheme})
and of pro affine derived group scheme definition (\ref{jhgyr}))
such that the category of perfect representations
$\Perf(\text{\rm ``$\lim_i$''} B_i^\bullet)$ of $\text{\rm ``$\lim_i$''} B_i^\bullet$
(see also section (\ref{hgddf}))
is naturally equivalent to $\DMT_\gm(X)_\Q$ as tensor triangulated category.
For a commutative $\Q$-algebra $R$ let $B_{i,R}^\bullet=B_i^\bullet \otimes_\Q^\bL R$. Then we have
an equivalence of tensor triangulated categories $\DMT_\gm(X)_R \simeq \Perf(\text{\rm ``$\lim_i$''} B_{i,R}^\bullet)$. 
\end{theorem}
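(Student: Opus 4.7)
The plan is to reduce Theorem \ref{main1-thm} to the unconditional case of the previous main theorem (descent for $A$ of bounded Tate type) via a filtered colimit argument, and then to use base change for the $R$-coefficient statement.

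First I would recall that over $\Q$ the $\infty$-category $\DMT(X)_\Q$ is modelled by Adams-graded modules over some augmented Adams-graded commutative dga $A$ over $\Q$, with $\DMT_\gm(X)_\Q$ corresponding to perfect modules; such an $A$ is furnished by the cycle-complex constructions of \cite{spitzweck-nistech} and \cite{spitzweck-mot} (and exists over a smooth connected $X/k$ by the results summarised in \cite[II.5.5]{levine.survey-mixed-motives}). The Adams grading is implemented through the semi-direct product with $\G_m$ using proposition~(\ref{j-equiv}), so I will suppress it in the discussion and restore it at the end.

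The central difficulty is that for general $X$ there is no reason for $A$ to be of bounded Tate type, so the first main theorem does not apply directly to $A$. I would circumvent this by writing $A$ as a filtered homotopy colimit $A \simeq \hocolim_i A_i$ of augmented Adams-graded cdga's $A_i$ over $\Q$ which are finitely presented among such objects. Any such finitely presented $A_i$ is built from finitely many cells, hence is concentrated in a finite range of Adams degrees, and each piece $A_i(k)$ is cohomologically bounded below; consequently each $A_i$ is of bounded Tate type, so the first main theorem yields a tensor-triangulated equivalence
\[
\Perf(A_i) \; \simeq \; \Perf(B_i^\bullet), \qquad B_i^\bullet \;=\; \unit^{\otimes^{\bL}_{A_i}\,(\bullet+1)}.
\]
Now I would combine this with the standard fact that $\Perf$ of a filtered colimit of $E_\infty$-algebras is the colimit of the $\Perf$-categories (since the unit generates $\Perf$ under finite colimits and retracts, and compact objects commute with filtered colimits of presentable stable $\infty$-categories). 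Together with the definition of $\Perf(\text{``}\lim_i\text{''}B_i^\bullet)$ as $\colim_i \Perf(B_i^\bullet)$ from section~(\ref{hgddf}), this gives
\[
\DMT_\gm(X)_\Q \;\simeq\; \Perf(A) \;\simeq\; \colim_i \Perf(A_i) \;\simeq\; \colim_i \Perf(B_i^\bullet) \;=\; \Perf(\text{``}\lim_i\text{''}B_i^\bullet),
\]
all as symmetric-monoidal triangulated equivalences; the semi-direct product with $\G_m$ is compatible with each step and accounts for the Adams grading on the motivic side.

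For the $R$-coefficient statement, I would observe that $A_{i,R} := A_i \otimes_\Q^\bL R$ is again finitely presented and of bounded Tate type, that $A_R := \hocolim_i A_{i,R}$ models Tate motives with $R$-coefficients (compatibility of the motivic description with extension of scalars), and that the formation of the Čech nerve commutes with base change: $B_{i,R}^\bullet \simeq B_i^\bullet \otimes_\Q^\bL R$ is the Čech nerve of the augmentation $A_{i,R} \to \unit$. Applying the same three-step chain as above with $A_{i,R}$ in place of $A_i$ yields the second assertion.

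The main technical obstacle I expect is the first equivalence in the chain, namely ensuring that the filtered colimit presentation $A \simeq \hocolim_i A_i$ can be arranged within finitely presented Adams-graded augmented cdga's (so that bounded Tate type really holds for each $A_i$), and that $\Perf$ commutes with this colimit in the tensor-triangulated and $\G_m$-equivariant sense. Once this is secured, the rest of the argument is a formal consequence of the first main theorem together with base change compatibility of the Čech construction.
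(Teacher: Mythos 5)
Your proposal follows essentially the same route as the paper: model $\DMT_\gm(X)_\Q$ by $\Perf(A)$ for an Adams-graded cdga $A$ over $\Q$, write $A$ as a filtered (homotopy) colimit of bounded Tate algebras $A_i$, apply the bounded-Tate-type descent theorem (together with the $\G_m$-transfer) to each $A_i$, and conclude via $\Perf(A)\simeq\text{$2$-}\colim_i\Perf(A_i)$. The paper's first of two suggested presentations of $A$ as such a colimit is precisely the finite-cell approximation you use.

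One small slip in your justification that the $A_i$ are of bounded Tate type: a finite cell Adams-graded cdga over $\Q$ with generators in negative Adams degrees is \emph{not} concentrated in a finite range of Adams degrees (e.g.\ $\Q[x]$ with $x$ in Adams degree $-1$ is nonzero in every nonpositive Adams degree). What is true, and what the paper actually uses, is that because all cells sit in strictly negative Adams degrees, for each fixed Adams degree $k$ only finitely many monomials in the (finitely many) generators can land in degree $k$; so each piece $A_i(k)$ is a finite sum and in particular cohomologically bounded below. Your conclusion is correct, but the ``finite range'' claim should be replaced by this counting argument (or by the paper's second, truncation-based, colimit presentation). The rest of your argument, including the commutation of $\Perf$ with filtered colimits of $E_\infty$-algebras and the base-change argument for a commutative $\Q$-algebra $R$, matches the structure of the paper's proof (the paper in fact leaves the $R$-coefficient part implicit, so your sketch there is a welcome addition).
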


We give the proof at the end of section (\ref{transfer}).

We remark that this theorem has an analogue for Beilinson motives, where more general
base schemes (which need not lie over a field) can be considered, see section (\ref{hgfddf}).

Next we turn to our statements for general coefficients.

\begin{theorem} \label{main-thm}
Suppose for each $i>0$ there is an $N \in \integers$ such that
$$\Hom_{\DM(X)}(\integers(0),\integers(i)[n])=0$$ for $n < N$.
Then there is an affine derived group scheme $B^\bullet$ over $\integers$ 
such that the category of perfect representations $\Perf(B^\bullet)$ of $B^\bullet$
is naturally equivalent to $\DMT_\gm(X)$ as tensor triangulated category.

For a commutative ring $R$ let $B_R^\bullet=B^\bullet \otimes^\bL R$. Then we have
an equivalence of tensor triangulated categories $\DMT_\gm(X)_R \simeq \Perf(B_R^\bullet)$. 
\end{theorem}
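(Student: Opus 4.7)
The plan is to reduce Theorem \ref{main-thm} to the intermediate descent result announced in the introduction — that for an augmented Adams graded $E_\infty$-algebra $A$ of bounded Tate type, the cosimplicial algebra $B^\bullet=\unit^{\otimes^\bL_A(\bullet+1)}$ is an affine derived group scheme satisfying $\Perf(B^\bullet)\simeq\Perf(A)$ — combined with an $E_\infty$-algebra model of integral Tate motives on $X$.

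First, I would invoke constructions in the style of \cite{spitzweck.per} to produce an Adams graded $E_\infty$-algebra $A$ over $\integers$ whose Adams graded category of perfect modules models $\DMT_\gm(X)$. Since $A$ models Tate motives one has $A(0)\simeq\integers$, $A(k)\simeq 0$ for $k>0$, and a canonical augmentation $A\to\unit$ extracting the Adams degree zero part. The cohomology of $A(-i)$ for $i>0$ computes $\Hom_{\DM(X)}(\integers(0),\integers(i)[\ast])$, so the hypothesis of the theorem translates directly into cohomological boundedness below of each $A(-i)$, i.e.\ into the statement that $A$ is of bounded Tate type.

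Second, I would form the cosimplicial $A$-algebra $B^\bullet$ with $B^n:=\unit^{\otimes^\bL_A(n+1)}$. The Segal, group-like, and unit conditions that enter definition (\ref{defi-derived-affine-gr-scheme}) are direct consequences of the cobar formalism applied to an augmentation, so $B^\bullet$ is an affine derived group scheme. The intermediate descent theorem applied to $A$ then yields a symmetric monoidal equivalence $\Perf(A)\simeq\Perf(B^\bullet)$ of stable $\infty$-categories.

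Third, I would reinstate the Adams grading using proposition (\ref{j-equiv}), which identifies Adams graded chain complexes with $\bG_m$-representations. Forming the semi-direct product with $\bG_m$ converts Adams graded perfect $A$-modules into perfect representations of the resulting affine derived group scheme, which is the object denoted $B^\bullet$ in the statement. Composing with the model equivalence from step 1 gives the desired tensor triangulated equivalence $\DMT_\gm(X)\simeq\Perf(B^\bullet)$.

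Fourth, for the base change statement I would observe that every step commutes with $-\otimes^\bL R$: the algebra $A_R:=A\otimes^\bL R$ models $\DMT_\gm(X)_R$, the augmentation base-changes, bounded Tate type is preserved (the Hom-vanishing hypothesis base-changes since it is a lower bound), and $B_R^\bullet$ is identified with the derived \v{C}ech resolution of $\unit_R\to A_R$. Repeating steps 2--3 with $A_R$ in place of $A$ then yields $\DMT_\gm(X)_R\simeq\Perf(B_R^\bullet)$.

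The main obstacle I anticipate is step 1: producing an $E_\infty$-model $A$ over $\integers$ whose Adams graded pieces $A(-i)$ are canonically identified with the appropriate integral motivic cohomology complexes on $X$, so that the vanishing hypothesis of the theorem becomes precisely the bounded-Tate-type condition on $A$, with a canonical augmentation. The compatibility of $\Perf(-)$ with derived base change of $E_\infty$-algebras invoked in step 4 is a further nontrivial input, but should follow from standard tensor triangular machinery once $\DMT_\gm(X)\otimes^\bL R\simeq\DMT_\gm(X)_R$ is known.
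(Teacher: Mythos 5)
Your steps 1--3 match the paper's own proof essentially exactly: produce an Adams graded $E_\infty$-algebra $A$ over $\integers$ from the representation theorem of \cite{spitzweck.per} so that $\Perf(A)\simeq\DMT_\gm(X)$; observe the hypothesis says precisely that $A$ is of bounded Tate type; form the coaugmented \v{C}ech resolution $B^\bullet$ of the augmentation (proposition (\ref{cech-der-equiv}), theorem (\ref{perf-descent-thm})); then pass from Adams graded objects to ungraded ones by the $\bG_m$-transfer, i.e. the passage $B^\bullet\mapsto j(B^\bullet)\mapsto\mathrm{diag}\,j(B^\bullet)=D^\bullet$ of section (\ref{transfer}), using corollary (\ref{transfer-corollary}), proposition (\ref{proposition:diag-equiv}) and theorem (\ref{main-transfer-thm}). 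This is the route taken in the paper.

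The genuine gap is in step 4. You propose to \emph{repeat} steps 2--3 with $A_R=A\otimes^\bL R$ in place of $A$, after noting that $A_R$ is again of bounded Tate type. That verification is indeed possible (it uses that $\integers$ has global dimension $1$, so tensoring with $R$ only introduces $\mathrm{Tor}_1$). But the descent theorem for a Tate algebra living over a general ring $R$ is theorem (\ref{RR-C-D}), and that theorem carries the extra hypothesis that $R$ have finite homological dimension; the finiteness is needed precisely in the K\"unneth and resolution arguments of lemma (\ref{juhtgd}) and lemma (\ref{main-tot}), which over $\integers$ rest on $\mathrm{Tor}^\integers_i=0$ for $i\ge 2$. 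Theorem (\ref{main-thm}) however asserts the $R$-coefficient equivalence for an \emph{arbitrary} commutative ring $R$. The paper avoids this by not re-running the descent over $R$: it keeps all the delicate arguments over $\integers$ and then uses the detection adjunctions $b_R\dashv r_R$, $b_R'\dashv r_R'$ of theorems (\ref{R-C-D}) and (\ref{R-perf-equiv}) (together with the $R$-version of the $\bG_m$-transfer, theorem (\ref{R-main-transfer-thm})) to deduce $\Perf(A_R)\simeq\Perf(B_R^\bullet)\simeq\Perf(D_R^\bullet)$ from the integral statement. You should replace your step 4 by an appeal to those detection theorems rather than by re-running the proof over $R$; otherwise you are implicitly adding the hypothesis of finite homological dimension of $R$, which is the hypothesis of theorem (\ref{R-main-thm}), not of theorem (\ref{main-thm}).
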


We give the proof at the end of section (\ref{transfer}).

The affine derived group scheme $B^\bullet$ can therefore be viewed as the {\em derived
motivic fundamental group} of $\DMT_\gm(X)$ (or rather of the infinity categorical
version of $\DMT_\gm(X)$).

We also have the following generalization of theorem (\ref{main-thm}).

\begin{theorem} \label{R-main-thm}
Let $R$ be a commutative ring of finite homological dimension.
Suppose for each $i>0$ there is an $N \in \integers$ such that
$$\Hom_{\DM(X)_R}(R(0),R(i)[n])=0$$ for $n < N$.
Let $R \to R'$ be a map of commutative rings.
Then there is an affine derived group scheme $B^\bullet$ over $R'$
such that the category of perfect representations $\Perf(B^\bullet)$ of $B^\bullet$
is naturally equivalent to $\DMT_\gm(X)_{R'}$ as tensor triangulated category.
\end{theorem}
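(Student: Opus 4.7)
The plan is to proceed in two stages: first establish the case $R'=R$ by the same strategy as the proof of theorem~(\ref{main-thm}), and then derive the general statement from this by base change of the resulting affine derived group scheme.

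For the first stage I would choose an augmented commutative Adams graded $E_\infty$-algebra $A$ over $R$, with $A(0) \simeq R$ and $A(k) \simeq 0$ for $k>0$, whose perfect module category realises $\DMT_\gm(X)_R$. Such an $A$ arises from the Adams graded cycle $E_\infty$-algebra construction of \cite{spitzweck.per} after derived extension of scalars to $R$; the finite homological dimension hypothesis on $R$ ensures that this scalar extension preserves the equivalence with $\DMT_\gm(X)_R$ (by controlling the Tor spectral sequence that compares $\Hom$ groups in $\DM(X)$ and in $\DM(X)_R$). The vanishing hypothesis
\[
\Hom_{\DM(X)_R}(R(0),R(i)[n]) = 0 \text{ for } n<N
\]
then translates directly into the statement that each complex $A(k)$ with $k<0$ is cohomologically bounded from below, i.e.\ that $A$ is of bounded Tate type. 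Form the cosimplicial $A$-algebra $B^\bullet := [n]\mapsto R^{\otimes^\bL_A(n+1)}$, the \v{C}ech resolution of the augmentation $A\to R$. The first main theorem of the text (descent for bounded Tate type algebras) yields $\Perf(B^\bullet) \simeq \Perf(A) \simeq \DMT_\gm(X)_R$ as tensor triangulated categories, after taking the semidirect product with $\G$ to pass from Adams graded complexes back to chain complexes.

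For the base change step I would set $B_{R'}^\bullet := B^\bullet \otimes^\bL_R R'$. This is the \v{C}ech cosimplicial algebra of $A\otimes^\bL_R R' \to R'$ and is again an affine derived group scheme over $R'$. To identify $\Perf(B_{R'}^\bullet)$ with $\DMT_\gm(X)_{R'}$ I would combine two base change statements: first, that forming perfect representations of an Adams graded affine derived group scheme commutes with derived extension of scalars in the ground ring, giving
\[
\Perf(B_{R'}^\bullet) \;\simeq\; \Perf(B^\bullet) \otimes_{\Perf(R)} \Perf(R');
\]
and second, that the analogous operation on the motivic side yields $\DMT_\gm(X)_R \otimes_{\Perf(R)} \Perf(R') \simeq \DMT_\gm(X)_{R'}$, by the standard functoriality of $\DM$-coefficients under base change.

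The main obstacle lies in this last step. Because $R'$ is allowed arbitrary homological dimension, the algebra $A\otimes^\bL_R R'$ need not itself be of bounded Tate type, so the first main theorem cannot be applied directly over $R'$; the equivalence has to be transported from $R$ to $R'$ rather than re-proved there. Making this transport rigorous requires verifying that the symmetric monoidal functor $\Perf(R')\otimes_{\Perf(R)}(-)$ is compatible with the formation of perfect representations of Adams graded cosimplicial algebras, with the $\G$-semidirect product construction that recovers genuine Tate motives in chain complexes, and with the motivic base change on the $\DM$-side. This bookkeeping, rather than any single deep calculation, is where the proof must be worked out in detail.
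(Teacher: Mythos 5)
Your first stage --- finding $A$ of bounded Tate type over $R$ with $\Perf(A) \simeq \DMT_\gm(X)_R$, forming the \v{C}ech cosimplicial algebra $B^\bullet$, and deducing $\Perf(B^\bullet) \simeq \Perf(A)$ by descent over $R$ --- matches the paper. The divergence, and the gap, is in the base change from $R$ to $R'$. You assert
\[
\Perf(B_{R'}^\bullet) \;\simeq\; \Perf(B^\bullet) \otimes_{\Perf(R)} \Perf(R')
\]
and characterise verifying it as bookkeeping. It is not: $\Perf(B_{R'}^\bullet)$ is carved out of $\D(B_{R'}^\bullet)$ by a limit condition (the homotopy cartesian condition along the cosimplicial direction, enforced via $\Tot$), and tensor products of stable categories commute with colimits but not with limits. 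Worse, this identity is essentially equivalent to the theorem being proved: granting the $R$-level equivalence $\Perf(B^\bullet)\simeq\Perf(A)$ and the standard base change $\Perf(A_{R'})\simeq\Perf(A)\otimes_{\Perf(R)}\Perf(R')$ for perfect modules over $E_\infty$-rings, the displayed formula holds if and only if $\Perf(B_{R'}^\bullet)\simeq\Perf(A_{R'})$, which is the conclusion you want. So treating it as routine compatibility is circular, and without a separate argument it is a genuine missing step.

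The paper takes a different, more direct route (remark (\ref{RR'-remark}) together with theorems (\ref{R-C-D}) and (\ref{R-perf-equiv})) that avoids tensor products of categories entirely. One observes that restriction of scalars $r_{R'}\colon\D(A_{R'})\to\D(A_R)$, and its analogue on $B^\bullet$-modules, is conservative and commutes with both halves of the descent adjunction $\D(A_{R'}) \rightleftarrows \D(B_{R'}^\bullet)_\cart$ --- with the left adjoint because extension of scalars is compatible with the cosimplicial tensoring, and with the right adjoint $\Tot$ because restriction of scalars preserves homotopy limits. The two commutative squares plus conservativity then let one check that the unit and counit over $R'$ are isomorphisms on the relevant full subcategories $\C_{R'}$ and $\D_{R'}$ by verifying this after forgetting to $R$, where theorem (\ref{RR-C-D}) already applies (using the bounded Tate type and finite homological dimension hypotheses on $R$). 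Essential surjectivity onto $\Perf(B_{R'}^\bullet)$ is then handled exactly as in proposition (\ref{true-perf}). If you want to salvage your approach, you would in effect have to reprove this conservativity argument to establish your base change formula; it is cleaner to use it directly.
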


The proof is also given at the end of section (\ref{transfer}).

We give examples of situations where these theorems apply
in section (\ref{section:examples}).

\section{Preliminaries and notation} \label{prelim}

In this text we deal with the theory of $\S$-modules and algebras in the algebraic setting
as developed in \cite{km}. Here $\S$ denotes the monoid $\caL(1)$, where $\caL$ is the image
of the topological linear isometries operad in $\Cpx(\Ab)$ under the normalized chain complex
functor. Note that in \cite{km} the notation $\mathbb{C}$ is used instead of $\S$.
The category of $\S$-modules is endowed witht the tensor product
$$M \boxtimes N = \caL(2) \otimes_{\S \otimes \S} (M \otimes N).$$
It is shown in \cite{km} that this indeed defines a symmetric monoidal structure
on $\Cpx(\Ab)$ with a so-called pseudo unit. This means that there is a unitality
map $\integers \boxtimes M \to M$ for any $M$ satisfying natural properties,
but this map need not be an isomorphism. We refer to \cite{spitzweck.thesis} for more
background on this material, in particular for the model structures these categories
are equipped with.

What will be important for us is that the category of commutative $\S$-algebras is
equipped with a proper model structure, see \cite{mandell.flat}. A commutative
$\S$-algebra is the same thing as an $\caL$-algebra, see \cite{km}, so the theory
of commutative $\S$-algebras is the same as the theory of $E_\infty$-algebras for
the particular $E_\infty$-operad $\caL$.

We will also speak about $\S$-modules and algebras in categories which are related
to $\Cpx(\Ab)$ such as $\Cpx(\Ab)^\integers$ (Adams graded complexes),
$\Cpx(R)$, the category of chain complexes
of $R$-modules for a commutative ring $R$, $\Cpx(R)^\integers$ and
complexes of modules over a cosimplicical commutative algebra.

For a commutative $\S$-algebra $A$ we write $\D(A)$ for the homotopy category
of $A$-modules. It is a closed symmetric monoidal category.

We write $\Perf(A)$ for the full tensor subcategory of $\D(A)$ of
perfect objects.

We usually write $\otimes^\bL$ for the derived tensor product, for example the tensor
product in $\D(A)$ will be written $\otimes^\bL_A$.
If we have two $\S$-algebra morphisms $A \to B$ and $A \to C$ we denote
by $B \otimes_A C$ the pushout.

We denote by $\bigtriangleup$
the simplicial category, i.e. objects
are the non-empty finite ordered sets $[n]=\{0,\ldots,n\}$
and morphisms are the monotone maps.

If $B^\bullet$ is a cosimplicial commutative $\S$-algebra we write $\D(B^\bullet)$ for the derived
category of $B^\bullet$-modules, where a $B^\bullet$-module consists of $B^m$-modules
$M^m$ together with maps $M^n \to M^m$ for each map $[n] \to [m]$ in
$\bigtriangleup$, linear over the corresponding map $B^n \to B^m$.
This is a particular example of a section category. This material is
covered in section (\ref{diag-mod-cat}).

We write $\D(B^\bullet)_\cart$ for the full subcategory of cartesian objects
of $\D(B^\bullet)$ (sometimes we also say homotopy cartesian),
i.e. for those modules $M^\bullet$ such that for any map $[n] \to [m]$
in $\bigtriangleup$ the induced map $M^n \otimes^\bL_{B^n} B^m \to M^m$ is
an isomorphism in $\D(B^m)$.

We let $\Perf(B^\bullet)$ be the full subcategory of $\D(B^\bullet)_\cart$ consisting
of $B^\bullet$-modules $M^\bullet$ such that each $M^n$ is a perfect $B^n$-modules,
or equivalently such that $M^0$ is a perfect $B^0$-module.

Suppose $A \to B^\bullet$ is a coaugmented cosimplicial commutative $\S$-algebra. Then we have
a natural base change map $\_ \otimes^\bL_A B^\bullet \colon \D(A) \to \D(B^\bullet)$.
We denote its right adjoint by $\Tot_A$. 

Let $A  \to B$ be a map of commutative $\S$-algebras. We associate to this map the following
coaugmented cosimplicial algebra: for any $[n] \in \bigtriangleup$ we let $B^n$ be
the $[n]$-fold coproduct of $A \to  B$ in the category of commutative $\S$-algebras
under $B$. We refer to $A \to B^\bullet$ to the coaugmented cosimplicial algebra associated
to $A \to B$.

We will usually write $\unit$ for the tensor unit in $\Cpx(\Ab)^\integers$.
We will write $\integers$ for the complex in $\Cpx(\Ab)$ with $\integers$ in degree $0$.

Let $\caC$ be a category. For an object $M \in \caC^\integers$ and $r \in \integers$
we write $M(r)$ for the corresponding object of $\caC$ sitting in degree $r$.

We denote the two possible functors $\Cpx(\Cpx(\Ab)) \to \Cpx(\Ab)$
assigning to a double complex the total complex where sums resp. products are used
by $\Total^\oplus$ resp. $\Total^\Pi$.

For a simplicial or cosimplicial object $X$ of an additive category we denote
by $\UC X$ the unnormalized chain complex associated to $X$.

In the text we use that if we have a simplicial object $M_\bullet$ in $\Cpx(\Ab)$
then its homotopy colimit can be computed by $\Total^\oplus(\UC M_\bullet)$.
Similarly, for a cosimplicial object $M^\bullet$ in $\Cpx(\Ab)$ the homotopy limit
can be computed by $\Total^\Pi(\UC M^\bullet)$. Thus in particular
$\Tot_A$ is given by such a formula.

\section{Diagrams of model categories and cofinality}
\label{diag-mod-cat}

Let $I$ be a small category and
$\omega \colon I \to \ModCat$ be a pseudo-functor.
Here $\ModCat$ denotes the $2$-category of model categories where
the morphisms are the left Quillen functors and the $2$-morphisms
the natural isomorphisms.

\begin{definition}
The category of sections of the
fibered category over $I^\op$ corresponding to $\omega$
is denoted by $\Sect(\omega)$. If it exists,
we endow $\Sect(\omega)$ with the projective model
structure, i.e. the model structure
such that weak equivalences and fibrations
are objectwise.
\end{definition}

In what follows we suppose that the projective model
structures on the section categories exist,
which is for example the case if $\omega$
takes values in cofibrantly generated model categories.

An object $x \in \Ho \Sect(\omega)$ is
called {\em cartesian} if for every
map $f \colon i \to j$ in $I$ the natural
morphism $\bL \omega(f)(x_i) \to x_j$
in $\Ho \omega(j)$ is an isomorphism.
The full subcategory of $\Ho \Sect(\omega)$
of cartesian objects is denoted by $\Ho \Sect(\omega)_\cart$.

Let $F \colon I' \to I$ be a functor and
set $\omega' := \omega \circ F$. Then we have
a natural pullback functor $F^* \colon
\Sect(\omega) \to \Sect(\omega')$ which is
clearly a right Quillen functor.
Let $F_*$ be its left adjoint.

For any $i \in I$ we denote by $F/i$
the over category relative to the functor $F$.
We let $J_i \colon F/i \to I'$ be the forgetful functor
and $\omega_i$ the functor $\omega' \circ J_i$.
We have the restriction functor
$J_i^* \colon \Sect(\omega') \to \Sect(\omega_i)$,
which is a right Quillen functor.

\begin{lemma} \label{res-left-Quillen}
The restriction functor $J_i^*$ is also a
left Quillen functor.
\end{lemma}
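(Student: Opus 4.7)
The plan is to exhibit a right adjoint $(J_i)_*$ of $J_i^*$ and show that it preserves fibrations and trivial fibrations. The key geometric input is that $J_i\colon F/i\to I'$ is a discrete Grothendieck fibration: given $h\colon i'\to i''$ in $I'$ and $(i'',f)\in F/i$, there is a unique cartesian lift, namely $h\colon (i',f\circ F(h))\to (i'',f)$. In particular the fiber of $J_i$ above $i'\in I'$ is the set $\Hom_I(F(i'),i)$.

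First I would write down $(J_i)_*$ explicitly by the formula
\[
((J_i)_*Y)_{i'} \;=\; \prod_{g\colon F(i')\to i} Y_{(i',g)},
\]
with transitions along $h\colon i'\to j'$ assembled componentwise using the section-structure morphisms of $Y$ attached to the cartesian lifts $h\colon (i',g'\circ F(h))\to (j',g')$. To verify that this really is the right adjoint I would appeal to the pointwise right Kan extension in the fibered setting: the comma category $i'\downarrow J_i$ is a disjoint union, indexed by $g\in\Hom_I(F(i'),i)$, of subcategories each of which has an initial object $((i',g),\id_{i'})$. Since limits over a disjoint union are products and limits over a category with an initial object equal the value there, the pointwise limit collapses to the advertised product.

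With the right adjoint in hand the Quillen property is immediate: products of fibrations (respectively trivial fibrations) are fibrations (respectively trivial fibrations) in any model category, and these classes are detected objectwise in the projective structure on $\Sect(\omega')$, so $(J_i)_*$ preserves them. By adjunction $J_i^*$ therefore preserves cofibrations and trivial cofibrations, which is exactly the content of the lemma.

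The main obstacle is the bookkeeping in the fibered setting: one has to check that the product formula really does furnish a section of the fibered category over $I^{\op}$ corresponding to $\omega'$, with transitions satisfying the coherence required by the pseudo-functor structure of $\omega$. Once the discrete-fibration property of $J_i$ is noted this reduces to a routine unwinding of universal properties, and no further hypothesis on $\omega$ is needed.
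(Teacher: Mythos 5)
Your argument is correct, but it attacks the problem from the opposite side of the adjunction compared to the paper. The paper never constructs the right adjoint of $J_i^*$: it invokes its existence ``by abstract nonsense,'' and then verifies the Quillen condition via lifting properties by computing $J_i^*$ on the \emph{generating} (trivial) cofibrations $\Hom_{I'}(i',\_) \times f$, observing that
$J_i^*(\Hom_{I'}(i',\_) \times f) \cong \coprod_{\alpha \in \Hom_I(F(i'),i)} \Hom_{F/i}((i',\alpha), \_) \times f$,
a coproduct of generating (trivial) cofibrations. You instead build the right adjoint explicitly as an objectwise product $\prod_{g\colon F(i')\to i} Y_{(i',g)}$, justified by the observation that $J_i$ is a discrete Grothendieck fibration with fiber $\Hom_I(F(i'),i)$ over $i'$, so each connected component of $i'\downarrow J_i$ has an initial object lying over $i'$, and the pointwise limit collapses to a product in a single fiber. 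The underlying combinatorial input is the same (the set $\Hom_I(F(i'),i)$ indexes the decomposition in both formulas, coproduct on one side and product on the other), but the paper's route is slightly leaner: since $J_i^*$ is simply restriction, its effect on the corepresentable generators is immediate, whereas your approach requires constructing the right adjoint as a section and checking the transition maps and pseudo-functor coherence---the ``bookkeeping'' you flag at the end. Your version does have the virtue of making the right adjoint visible, which can be reused; the paper's version minimizes what has to be verified.
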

\begin{proof}
By abstract nonsense $J_i^*$ has a right adjoint.
We show that the right adjoint of $J_i^*$ preserves fibrations
and trivial fibrations. By adjunction
it is enough to show that $J_i^*$
preserves (trivial) cofibrations of the form
$\Hom_{I'}(i',\_) \times f$ for $f$ a (trivial)
cofibration in $\omega'(i')$, where
$\Hom_{I'}(i',\_) \times \_$ denotes the left adjoint
to the evaluation functor $\ev_{i'} \colon
\Sect(\omega') \to \omega'(i')$.
But clearly $$J_i^*(\Hom_{I'}(i',\_) \times f)
\cong \coprod_{\alpha \in \Hom_I(F(i'),i)}
\Hom_{F/i}((i',\alpha), \_) \times f \text{,}$$
which shows the claim.
\end{proof}
\begin{corollary} \label{value-left-adjoint}
For an objectwise cofibrant section $x \in \Sect(\omega')$
the value
of $\bL F_*(x)$ at $i \in I$ is given as the
homotopy colimit of the diagram
$$\begin{array}{rccl}
D_i \colon & F/i & \to & \omega(i) \\
& (i',F(i') \overset{f}{\to} i) &
\mapsto & \omega(f)(x_{i'})
\end{array} \text{.}$$
\end{corollary}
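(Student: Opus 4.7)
The plan is to take a projective cofibrant replacement $Qx \to x$ in $\Sect(\omega')$ and apply the pointwise formula for the left Kan extension $F_*$ adapted to sections of pseudo-functors, giving $\bL F_*(x)(i) = F_*(Qx)(i) = \colim_{F/i} D_i(Qx)$, the ordinary colimit in $\omega(i)$ of the diagram $(i', f) \mapsto \omega(f)((Qx)_{i'})$. The task then reduces to identifying this ordinary colimit with $\hocolim D_i(x)$.

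To show the ordinary colimit computes the homotopy colimit, I would argue that $D_i(Qx)$ is projectively cofibrant in $\Fun(F/i, \omega(i))$. My strategy is to factor $D_i$ as the composition
\[ \Sect(\omega') \xrightarrow{J_i^*} \Sect(\omega_i) \xrightarrow{\eta_*} \Sect(\omega \circ c_i) = \Fun(F/i, \omega(i)), \]
where $c_i \colon F/i \to I$ is the constant functor at $i$ and $\eta \colon \omega_i \Rightarrow \omega \circ c_i$ is the pseudo-natural transformation whose component at $(i', f)$ is the left Quillen functor $\omega(f) \colon \omega(F(i')) \to \omega(i)$, with $\eta_*$ applying these pointwise. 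Lemma \ref{res-left-Quillen} shows $J_i^*$ is left Quillen, and the same style of argument, checking the generating projective cofibrations $\Hom_{F/i}((i',f),\_) \times g$ and now using that each $\omega(f)$ is left Quillen, shows $\eta_*$ is left Quillen as well. Hence $D_i(Qx) = \eta_* J_i^*(Qx)$ is projectively cofibrant, and its ordinary colimit agrees with its homotopy colimit.

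Finally, Ken Brown's lemma applied to each left Quillen functor $\omega(f)$ shows that the componentwise weak equivalence $Qx \to x$ of objectwise cofibrant sections is sent to a componentwise weak equivalence $D_i(Qx) \to D_i(x)$, so $\hocolim D_i(Qx) \simeq \hocolim D_i(x)$, completing the proof. I expect the main obstacle to be verifying the left Quillen property of $\eta_*$: while it follows the template of Lemma \ref{res-left-Quillen}, one needs to track carefully how the varying transition functors $\omega(F(\alpha))$ and the components $\omega(f)$ interact when computing $\eta_*$ on a generator $\Hom_{F/i}((i',f),\_) \times g$, which amounts to using the coherence isomorphism $\omega(f'') \circ \omega(F(\alpha)) \cong \omega(f)$ for any arrow $\alpha \colon (i',f) \to (i'',f'')$ in $F/i$.
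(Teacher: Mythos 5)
Your proof follows the same strategy as the paper's: start from the strict Kan extension formula for $F_*$, then argue that the diagram $D_i$ applied to a cofibrant section is projectively cofibrant, so its strict colimit is a homotopy colimit; this is precisely how the paper invokes Lemma \ref{res-left-Quillen}. You spell out two steps the paper leaves implicit --- the factorization $D_i = \eta_* \circ J_i^*$ together with the left Quillen property of $\eta_*$, and the cofibrant-replacement/Ken Brown argument needed to pass from ``objectwise cofibrant'' $x$ to a genuinely cofibrant section --- but the underlying idea is identical.
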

\begin{proof}
The value of $F_*(x)$ is given as the strict
colimit of the above diagram.
If now $x$ is cofibrant then
by lemma (\ref{res-left-Quillen}) the above diagram
is also a cofibrant object in the diagram
category $\omega(i)^{F/i}$ with the projective
model structure, hence the colimit computes
the homotopy colimit.
\end{proof}

\begin{remark}
This is a model category theoretic proof (under the mild assumptions that the model
structures exist) of the section category version
of the general infinity category
statement that the left Kan extension
of a functor $I' \to K$ along a functor $I' \to I$
is computed by such (homotopy) colimits.
\end{remark}

Recall that a category is called {\em contractible}
if the realization of the nerve of the category is contractible.

\begin{lemma} \label{contr-htp-colimit}
Let $I$ be a contractible category
and $D \colon I \to \caC$ be a diagram in a
model category $\caC$ such that all transition maps
are weak equivalences. Then for any $i 
\in I$ the map from $D(i)$ to the homotopy colimit
of $D$ is also a weak equivalence.
\end{lemma}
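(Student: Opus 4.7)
The plan is to show that $D$, viewed as an object of $\Ho(\caC^I)$, is weakly equivalent to the constant diagram $\underline{D(i)}$, and then to compute the homotopy colimit of this constant diagram using the contractibility of $I$.

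First I would reduce to the case of a simplicial combinatorial model category (via a framing construction, or by replacing $\caC$ with a Quillen-equivalent such category), so that the Bousfield--Kan coend formula for homotopy colimits and the tensoring with simplicial sets are both available. The homotopy colimit of $D$ is then computed by the realization of the bar construction $B(*, I, D)_n = \bigsqcup_{i_0 \to \cdots \to i_n} QD(i_0)$, where $QD$ is a projective cofibrant replacement of $D$.

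The key step is to compare $D$ with the constant diagram $\underline{D(i)} \colon I \to \caC$. Since $D$ sends every morphism of $I$ to a weak equivalence, it factors up to homotopy through the $\infty$-groupoid completion of $I$; but the geometric realization of the nerve $N(I)$ is contractible by hypothesis, so this completion is equivalent to a point, and hence $D$ is equivalent in $\Ho(\caC^I)$ to $\underline{D(i)}$ for any chosen $i$. Concretely, I would produce a zig-zag of pointwise weak equivalences via the bar construction, using the assumption on $D$ to show that the natural collapse map of simplicial objects $B(*, I, D) \to \underline{D(i)}$ is a weak equivalence after realization.

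For the constant diagram one has $\hocolim \underline{D(i)} \simeq |N(I)| \otimes D(i)$, which is weakly equivalent to $D(i)$ by contractibility of $|N(I)|$. Composing with the equivalence $\hocolim D \simeq \hocolim \underline{D(i)}$ coming from the previous step yields the required weak equivalence $D(i) \to \hocolim D$. The main obstacle is producing the zig-zag between $D$ and $\underline{D(i)}$ rigorously in model-categorical language; the cleanest route passes through $\infty$-categorical localizations at the class of all morphisms of $I$, but a direct argument via the bar construction (essentially a model-categorical version of Quillen's Theorem~A) also works.
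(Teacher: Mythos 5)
Your approach is genuinely different from the paper's, and the paper's author explicitly acknowledges it in the opening line of the proof: ``if we would know that the diagram were equivalent to a constant diagram then the lemma would directly follow from \cite[Theorem 19.6.7 (1)]{hirschhorn}.'' The paper then deliberately avoids that route. Instead it reduces to the dual statement for simplicial sets by testing against mapping spaces, and applies the Bousfield--Kan spectral sequence for the homotopy limit: the $E_2$-term $\bR^p \lim \pi_q D$ vanishes for $p>0$ because $\pi_q D$ is a \emph{strictly} constant diagram over a contractible category (the transition maps become isomorphisms on homotopy groups). This sidesteps the need to produce an equivalence $D \simeq \underline{D(i)}$ in $\Ho(\caC^I)$ at the level of the original model category, and needs only a statement about derived limits of constant diagrams of abelian groups/groups/sets, which is elementary.

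Your strategy is morally sound, but the step you yourself flag as ``the main obstacle''---upgrading the observation that $D$ inverts all morphisms to an actual isomorphism with a constant diagram in $\Ho(\caC^I)$---is precisely where the work lies, and your proposed concrete mechanism for it is problematic. You suggest showing ``the natural collapse map of simplicial objects $B(*, I, D) \to \underline{D(i)}$ is a weak equivalence after realization.'' But the realization of $B(*, I, D)$ \emph{is} $\hocolim D$, so this collapse map being an equivalence after realization is essentially the conclusion of the lemma; it does not by itself establish that $D$ is equivalent to $\underline{D(i)}$ as $I$-diagrams, and as stated it risks being circular. A correct version of your strategy exists (factor $D$ through the $\infty$-categorical localization $I[\mathrm{all}^{-1}]$, which is a contractible $\infty$-groupoid, hence equivalent to a point, giving $D \simeq \underline{D(i)}$ in $\Fun(I,\caC_\infty)$; then use that this localization presents $\Ho(\caC^I)$ for the projective structure), but it requires machinery the paper avoids, and your sketch does not supply it. If you want to stay model-categorical, the paper's Bousfield--Kan argument is both shorter and more self-contained; if you want to pursue your route, you need to replace the bar-construction step with an honest identification of $D$ with a constant diagram, rather than a computation that already presupposes what you want to prove about homotopy colimits.
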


\begin{proof}
Note that if we would know that the diagram were equivalent to a constant
diagram then the lemma would directly follow from \cite[Theorem 19.6.7 (1)]{hirschhorn}.

By looking at mapping spaces we reduce the problem to the dual statement
for simplicial sets, i.e. we have a diagram $D \colon I \to \sSet$
where the transition maps are weak equivalences and we want to prove that
the map from the homotopy limit to every $D(i)$ is a weak equivalence.
We apply the Bousfield-Kan spectral sequence \cite[Ch. XI, 7.1]{bousfield-kan}:
the $E_2$-term is given by the $\bR^p \lim \pi_q D$, $p \le q$. But for every $q$
the diagram $\pi_q D$ is constant over the contractible category $I$,
thus it follows from \cite[Theorem 19.6.7 (2)]{hirschhorn} that
$\bR^p \lim \pi_q D=0$ for $p>0$ (for $q=1$ one gives the argument using
\cite[Ch. XI, 7.2 (ii)]{bousfield-kan}). Thus the spectral sequence degenerates
and the result follows.
\end{proof}

Let $I$, $F$,
$\omega$ and $\omega'$ be as in the beginning of this section.
Recall from \cite[Definition 19.6.1]{hirschhorn} that the functor $F$ is called
{\em homotopy left cofinal} if for every object $i \in I$
the over category $F/i$ is contractible.

\begin{lemma} \label{cart-equiv}
Suppose that $F$ is homotopy left cofinal.
Then the functor $\bL F_*$ sends cartesian
objects to cartesian objects and the
adjoint functors $\bL F_*$ and $\bR F^*$
restrict to mutually inverse equivalences between
$\Ho \Sect(\omega')_\cart$ and $\Ho \Sect(\omega)_\cart$.
\end{lemma}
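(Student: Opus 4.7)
The plan is to exploit the explicit formula of corollary (\ref{value-left-adjoint}) for $\bL F_*$ together with lemma (\ref{contr-htp-colimit}) applied to the contractible over-categories $F/i$. That $\bR F^*$ preserves cartesian objects is immediate: if $y$ is cartesian, then for any map $h \colon i' \to j'$ in $I'$ the comparison map $\bL\omega'(h)(y_{F(i')}) \to y_{F(j')}$ equals the cartesian comparison for $y$ at $F(h)$ (since $\omega'(h) = \omega(F(h))$ by definition of $\omega'$), hence is an isomorphism. So the work is to establish that $\bL F_*$ preserves cartesian objects and that the unit and counit of the derived adjunction $(\bL F_*, \bR F^*)$ are weak equivalences on the cartesian subcategories.

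The key observation is that when $x \in \Sect(\omega')$ is cartesian and objectwise cofibrant (arranged by cofibrant replacement in the projective model structure), the diagram $D_i \colon F/i \to \omega(i)$ of corollary (\ref{value-left-adjoint}) has all transition maps weak equivalences. Indeed, a morphism $(i',f) \to (j',g)$ in $F/i$ corresponds to a map $h \colon i' \to j'$ in $I'$ with $g \circ F(h) = f$, and the induced transition $\omega(f)(x_{i'}) \to \omega(g)(x_{j'})$ factors via pseudo-functoriality as $\omega(g)(\omega'(h)(x_{i'})) \to \omega(g)(x_{j'})$, which is a weak equivalence because $x$ is cartesian and $\omega(g)$ is left Quillen.

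Combined with the contractibility of $F/i$, lemma (\ref{contr-htp-colimit}) then gives that for every $(i',f) \in F/i$ the structure map $D_i(i',f) \to (\bL F_* x)_i$ is a weak equivalence. To see $\bL F_* x$ is cartesian, given $\phi \colon i \to j$ in $I$ and any $(i',f) \in F/i$, its image $(i', \phi \circ f) \in F/j$ gives $D_j(i', \phi \circ f) = \bL\omega(\phi)(D_i(i',f))$, and the comparison map $\bL\omega(\phi)(\bL F_* x)_i \to (\bL F_* x)_j$ is identified through these structure-map equivalences with the identity on $\omega(\phi \circ f)(x_{i'})$. The unit $x_{i'} \to (\bL F_* x)_{F(i')} = \hocolim_{F/F(i')} D_{F(i')}$ is the structure map at the tautological object $(i', \id_{F(i')})$, under which $D_{F(i')}(i', \id) \simeq x_{i'}$, and hence is a weak equivalence by the same lemma applied to $F/F(i')$; and the counit $(\bL F_* \bR F^* y)_i = \hocolim_{F/i}((i',f) \mapsto \omega(f)(y_{F(i')})) \to y_i$ collapses to $y_i$ similarly, since cartesianness of $y$ makes each $\omega(f)(y_{F(i')}) \to y_i$ a weak equivalence.

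The main obstacle is the coherence identification in the preceding paragraph: verifying that the induced map $\bL\omega(\phi)(\hocolim_{F/i} D_i) \to \hocolim_{F/j} D_j$ indeed matches, through the hocolim structure-map equivalences supplied by lemma (\ref{contr-htp-colimit}), the identity on $\omega(\phi \circ f)(x_{i'})$. This is routine naturality bookkeeping but requires care with the post-composition functor $F/i \to F/j$, $(i',f) \mapsto (i', \phi \circ f)$, and the compatibility of pseudo-functoriality of $\omega$ with the homotopy colimit structure maps; modulo this, the contractibility hypothesis does all the work.
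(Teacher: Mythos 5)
Your proposal is correct and follows essentially the same route as the paper's proof: compute $\bL F_*$ via corollary (\ref{value-left-adjoint}) as a homotopy colimit over $F/i$, observe that cartesianness of $x$ makes all transition maps of $D_i$ weak equivalences, and invoke lemma (\ref{contr-htp-colimit}) together with contractibility of $F/i$ to identify each $D_i(i',f)$ with $(\bL F_* x)_i$; the unit and counit arguments then run the same way. You spell out a bit more of the naturality bookkeeping (and the easy $\bR F^*$ direction), which the paper leaves implicit, but the underlying argument is the same.
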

\begin{proof}
Let $x \in \Sect(\omega')$ and $i \in I$.
Suppose all the $x_{i'}$, $i' \in I'$, are cofibrant.
By corollary (\ref{value-left-adjoint}) the value of $\bL F_*(x)$ at $i$ is computed
as the homotopy colimit over the diagram
$D_i \colon F/i \to \omega(i)$, $(i',f \colon F(i') \to i)
\mapsto \omega(f)(x_{i'})$. Now if $x$ is cartesian
then all transition maps of the diagram $D_i$
are weak equivalences, and the index category $F/i$
is contractible by assumption, hence
by lemma (\ref{contr-htp-colimit}) the
maps $D_i((i',f)) \to \hocolim D_i$ are weak equivalences,
too. This shows the first statement.

The unit $\id \to \bR F^* \circ \bL F_*$ and the
counit $\bL F_* \circ \bR F^* \to \id$
are isomorphisms on cartesian objects by the
same argument as for the first part.
This completes the proof.
\end{proof}

\begin{lemma} \label{htp-type-over-cat}
Let $D$ be a small category and $F \colon \bigtriangleup \to D$
a functor. Then for any $d \in D$ the over
category $F/d$ has the homotopy type of the
simplicial set $[n] \mapsto \Hom_D(F([n]),d)$.
\end{lemma}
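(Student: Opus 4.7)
The plan is to identify $F/d$ with the category of simplices of the simplicial set $S \colon [n] \mapsto \Hom_D(F([n]), d)$, and then to apply the classical fact that every simplicial set is weakly equivalent to the nerve of its own category of simplices.

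First I would unwind the definitions. An object of $F/d$ is a pair $([n], f \colon F([n]) \to d)$, and a morphism $([n], f) \to ([m], g)$ is a map $\varphi \colon [n] \to [m]$ in $\bigtriangleup$ satisfying $g \circ F(\varphi) = f$. On the other hand, since $F$ is a cosimplicial object of $D$, the assignment $S = \Hom_D(F(-), d)$ defines a presheaf on $\bigtriangleup$; its category of simplices $\bigtriangleup/S$ has objects $([n], x \in S([n]))$ with morphisms $([n], x) \to ([m], y)$ given by maps $\varphi \colon [n] \to [m]$ in $\bigtriangleup$ such that $y \circ F(\varphi) = x$. A direct comparison of these two descriptions gives a canonical isomorphism of categories $F/d \cong \bigtriangleup/S$.

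Second, I would invoke the classical equivalence between an arbitrary simplicial set $T$ and the nerve of its category of simplices $\bigtriangleup/T$. This is witnessed by the ``last vertex'' map and follows from Quillen's Theorem A, or alternatively from the co-Yoneda presentation $T \cong \colim_{\bigtriangleup[n] \to T} \bigtriangleup[n]$: the indexing category of this colimit is precisely $\bigtriangleup/T$, and since the diagram is valued in representable simplicial sets it is cofibrant in the projective model structure, so the strict colimit agrees with the homotopy colimit, which in turn is weakly equivalent to the nerve of $\bigtriangleup/T$ (the diagram of realizations being pointwise contractible). Applying this with $T = S$ and combining with the first step yields the desired homotopy equivalence between the nerve of $F/d$ and $S$.

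The main obstacle is essentially bookkeeping: one must keep careful track of the direction conventions in the definitions of $F/d$ and $\bigtriangleup/S$, since morphisms in $F/d$ go in the ``same'' direction as $\varphi$ while the presheaf structure on $S$ uses the opposite direction, and it is easy to get confused when writing the identification explicitly. Once this identification is in place, the conclusion is an instance of a standard fact that is wholly independent of $F$ and $D$.
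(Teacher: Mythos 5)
Your proof is correct and follows the same route as the paper: the paper's proof is a citation to Dwyer--Kan, and what that reference establishes is precisely your two-step argument, namely that $F/d$ is canonically isomorphic to the category of simplices $\bigtriangleup/S$ of $S := \Hom_D(F(-),d)$, and that the nerve of a category of simplices is weakly equivalent to the simplicial set itself.

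One minor imprecision worth fixing in your alternative route: the assertion that the diagram $([n],x)\mapsto\Delta^n$ on $\bigtriangleup/T$ is projectively cofibrant \emph{because it is valued in representables} is not a valid justification. A diagram valued in representables need not be projectively cofibrant --- e.g.\ over the arrow category, the diagram $\Delta^1 \to \Delta^0$ given by a degeneracy is not a projective cofibration. The diagram you are using \emph{is} in fact projectively cofibrant, but this relies on the specific structure of $\bigtriangleup/T$ (one builds it by attaching free cells indexed by non-degenerate simplices, using the Eilenberg--Zilber lemma; see Hirschhorn or Dugger's notes on homotopy colimits). Since your primary route via the last-vertex map and Quillen's Theorem A is self-contained and correct, this does not affect the validity of the proof.
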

\begin{proof}
This is as in the proof of
\cite[Proposition 6.11]{dwyer-kan-function}.
\end{proof}

We will apply lemma (\ref{cart-equiv})
to the functors $\mathrm{diag} \colon \bigtriangleup
\to \bigtriangleup \times \bigtriangleup$
and $i \colon \bigtriangleup \to \bigtriangleup_*$.
We introduce the latter category.

First we denote by $\bigtriangleup_+$
the category of finite ordered sets $[p]$
for $p \in \{-1\} \cup \naturals$, so
that $[-1]$ is the empty ordered set.
We have a natural full embedding
$\bigtriangleup \hookrightarrow
\bigtriangleup_+$.

Next for any $[p] \in \bigtriangleup_+$
we denote by $[p]_*$ the ordered set
$[p] \sqcup \{*\}$, where we declare
the ordering by $p < *$. We view
$[p]_*$ as a {\em pointed} finite ordered set pointed
by $*$.
We let $\bigtriangleup_*$ be the category
of the pointed finite ordered sets
$[p]_*$, $[p] \in \bigtriangleup_+$,
with order preserving pointed maps.

So we have an embedding of categories
$\bigtriangleup \subset \bigtriangleup_*$
which is not full.

\begin{lemma} \label{pointed-contr}
For any $[q]_* \in \bigtriangleup_*$
the simplicial set $[p] \mapsto \Hom_{\bigtriangleup_*}
([p]_*, [q]_*)$
is contractible.
\end{lemma}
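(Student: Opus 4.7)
My plan is to identify the simplicial set $X^q := \Hom_{\bigtriangleup_*}((-)_*, [q]_*)$ with the standard simplex $\Delta^{q+1}$, which is well-known to be contractible. The whole argument is an unpacking of the definition of $\bigtriangleup_*$, so there is no substantive obstacle, only the bookkeeping of the identification.

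First I would observe that any order-preserving pointed map $f \colon [p]_* \to [q]_*$ is determined entirely by its restriction to $[p] \subset [p]_*$, since the value $f(*) = *$ is forced by pointedness. Conversely, because $*$ is the maximum of $[q]_*$, an arbitrary order-preserving map $g \colon [p] \to [q]_*$ of plain ordered sets extends uniquely to an order-preserving pointed map $[p]_* \to [q]_*$ by decreeing $* \mapsto *$; monotonicity of the extension is automatic. Hence
$$\Hom_{\bigtriangleup_*}([p]_*, [q]_*) \;\cong\; \Hom_{\text{ord.\ sets}}([p], [q]_*).$$

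Next I would identify $[q]_*$ as an ordered set with $[q+1]$ via the bijection sending $i \mapsto i$ for $0 \le i \le q$ and $* \mapsto q+1$. Combining with the previous step yields a bijection
$$\Hom_{\bigtriangleup_*}([p]_*, [q]_*) \;\cong\; \Hom_{\bigtriangleup}([p], [q+1]) \;=\; \Delta^{q+1}_p.$$
It remains to check that this bijection is natural in $[p]$: for $\phi \colon [p'] \to [p]$ in $\bigtriangleup$, the induced pointed map $\phi_* \colon [p']_* \to [p]_*$ restricts to $\phi$ on $[p']$ and fixes $*$, so precomposition with $\phi_*$ on the left-hand side corresponds to precomposition with $\phi$ on the right-hand side.

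Putting these steps together gives an isomorphism $X^q \cong \Delta^{q+1}$ of simplicial sets, and the geometric realization of $\Delta^{q+1}$ is the standard topological $(q+1)$-simplex, hence contractible. The only mildly tricky point is recognizing that the pointedness condition combined with $*$ being the maximum converts a pointed hom-set over $\bigtriangleup_*$ into a plain hom-set over $\bigtriangleup$ with the target shifted up by one.
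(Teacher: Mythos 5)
Your argument is correct, and it takes a genuinely different route from the paper's. You identify the simplicial set in closed form with the standard simplex $\Delta^{q+1}$: since $*$ is the maximum of $[q]_*$, a pointed monotone map $[p]_*\to[q]_*$ is freely determined by its restriction to $[p]$, and $[q]_*$ is order-isomorphic to $[q+1]$; naturality in $[p]$ holds because $i(\phi)=\phi_*$ restricts to $\phi$ on $[p']\subset[p']_*$. Contractibility of $\Delta^{q+1}$ is then standard. The paper instead runs an extra-degeneracy argument: regarding the hom-functor as an augmented simplicial set $K$ on $\bigtriangleup_+$ (with $K([-1])=\Hom_{\bigtriangleup_*}([-1]_*,[q]_*)$ a point), it observes that $K$ extends to a presheaf $\widetilde K$ on $\bigtriangleup_*$, and shows that any augmented simplicial set with such an extension admits an explicit simplicial contraction onto its augmentation. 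Concretely, the projections $[p]_*\to[-1]_*$ give a section $s$ of the augmentation $\epsilon$, and the collapse maps $r_\alpha\colon[p]_*\to[p]_*$ (identity on $\alpha^{-1}(0)$, sending $\alpha^{-1}(1)$ to $*$) assemble via $\widetilde K$ into a simplicial homotopy from $s\circ\epsilon$ to $\id$. Your identification is the quickest path for the specific representable at hand and makes the geometry transparent; the paper's argument is a reusable general pattern that applies to any $\bigtriangleup_*$-extendable augmented simplicial set, not just representables, and needs no explicit computation of the simplicial set.
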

\begin{proof}
In fact any augmented simplicial set $K \colon
\bigtriangleup_+^\op \to \Set$ which can be
extended to a functor $\bigtriangleup_* \to \Set$
has the discrete homotopy type of $A:=K([-1])$.
Such an extension $\widetilde{K}$
gives the following data:
We denote by $\mathrm{cs}A$ the constant simplicial
set on $A$. The augmentation of $K$
is a map $\epsilon \colon K \to \mathrm{cs}A$. The
projections $[p]_* \to [-1]_*$ define
a section $s$ of $\epsilon$.
Furthermore we get a
simplicial homotopy $s \circ \epsilon \to \id$
as follows: A map $K \times \Delta[1] \to K$
is a family of maps $h_p^\alpha \colon K_p \to K_p$,
$\alpha \in \Delta[1]_p = \Hom_\bigtriangleup([p],[1])$.
Now for any such $\alpha$ we denote by
$r_\alpha$ the map $[p]_* \to [p]_*$ which
is the identity on $\alpha^{-1}(0)$ and
sends $\alpha^{-1}(1)$ to $*$.
Define $h_p^\alpha$ to be the map
$\widetilde{K}(r_\alpha)$.
Then it is easily checked that the
$h_p^\alpha$ fit together to a homotopy
from $s \circ \epsilon$ to $\id$.
\end{proof}

\begin{corollary} \label{incl-cofinal}
The inclusion functors
$\mathrm{diag} \colon \bigtriangleup \to
\bigtriangleup \times \bigtriangleup$ and
$i \colon \bigtriangleup \to \bigtriangleup_*$
are homotopy left cofinal.
\end{corollary}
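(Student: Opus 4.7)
The plan is a direct two-line combination of the previous two lemmas. By Lemma (\ref{htp-type-over-cat}), for any functor $F \colon \bigtriangleup \to D$ and any $d \in D$, the over-category $F/d$ has the homotopy type of the simplicial set $[p] \mapsto \Hom_D(F([p]),d)$. So in each of the two cases it suffices to exhibit the relevant $\Hom$-simplicial set as contractible, and the matter reduces to a combinatorial check.

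For $i \colon \bigtriangleup \to \bigtriangleup_*$ and an object $[q]_* \in \bigtriangleup_*$, the simplicial set of Lemma (\ref{htp-type-over-cat}) is $[p] \mapsto \Hom_{\bigtriangleup_*}(i([p]), [q]_*) = \Hom_{\bigtriangleup_*}([p]_*, [q]_*)$, which is precisely the object shown to be contractible in Lemma (\ref{pointed-contr}). Hence $i/[q]_*$ is contractible. (One sanity check to make is that the indexing variable $[p]$ really runs over $\bigtriangleup$ in both lemmas; this is the case, since $\bigtriangleup$ sits inside $\bigtriangleup_+$ as the full subcategory on the nonempty ordered sets, and the simplicial set of Lemma (\ref{pointed-contr}) is the restriction along this inclusion.)

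For $\mathrm{diag} \colon \bigtriangleup \to \bigtriangleup \times \bigtriangleup$ and an object $([m],[n])$, the corresponding simplicial set is
\[
[p] \mapsto \Hom_{\bigtriangleup \times \bigtriangleup}(([p],[p]),([m],[n])) = \Hom_\bigtriangleup([p],[m]) \times \Hom_\bigtriangleup([p],[n]),
\]
which is $\Delta[m] \times \Delta[n]$. Both factors are contractible simplicial sets, so their product is contractible, showing that $\mathrm{diag}/([m],[n])$ is contractible.

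There is no real obstacle here; the only mild subtlety is unwinding the definition of the embedding $i$ (sending $[p]$ to $[p]_*$ and extending morphisms by $* \mapsto *$) so as to match the simplicial set of Lemma (\ref{pointed-contr}) on the nose. Both statements then follow immediately.
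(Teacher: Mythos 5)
Your proof is correct and is essentially identical to the paper's: both apply Lemma (\ref{htp-type-over-cat}) to identify the homotopy type of the over-categories as $\Delta[m]\times\Delta[n]$ (for $\mathrm{diag}$) and $[p]\mapsto\Hom_{\bigtriangleup_*}([p]_*,[q]_*)$ (for $i$), then invoke contractibility, using Lemma (\ref{pointed-contr}) for the latter. The added sanity check about the indexing convention is fine but not needed beyond what the paper states.
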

\begin{proof}
Let $([n],[m]) \in \bigtriangleup \times \bigtriangleup$.
By lemma (\ref{htp-type-over-cat}) the over category
$\mathrm{diag}/([n],[m])$ has the homotopy type
of $\Delta[n] \times \Delta[m]$, which is contractible.
By the same lemma the over category
$i/[q]_*$ has the homotopy type of the simplicial
set $$[p] \mapsto \Hom_{\bigtriangleup_*}([p]_*,[q]_*) \text{,}$$
which is contractible by lemma (\ref{pointed-contr}).
\end{proof}

\begin{proposition} \label{cofinal-equiv}
Let $F \colon \bigtriangleup_* \to \ModCat$ be a diagram of model
categories (i.e. a pseudo-functor), let $F' \colon \bigtriangleup \to \ModCat$
be the restricted diagram. Then the canonical functor
$\Ho F([-1]_*) \to \Ho \Sect(F')_\cart$ is an equivalence.
\end{proposition}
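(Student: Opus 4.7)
The plan is to apply Lemma \ref{cart-equiv} twice. First observe that $[-1]_* = \{*\}$ is the initial object of $\bigtriangleup_*$: it is a pointed set consisting only of the basepoint, so there is a unique pointed order-preserving map $[-1]_* \to [q]_*$ for every $[q]_* \in \bigtriangleup_*$. Let $j \colon \{[-1]_*\} \hookrightarrow \bigtriangleup_*$ be the inclusion of this initial object. For any $[p]_* \in \bigtriangleup_*$ the over-category $j/[p]_*$ has exactly one object (the unique map $[-1]_* \to [p]_*$) and is therefore contractible, so $j$ is homotopy left cofinal.

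Applying Lemma \ref{cart-equiv} to $j$, the adjunction $\bL j_* \dashv \bR j^*$ yields mutually inverse equivalences between $\Ho \Sect(F|_{\{[-1]_*\}})_\cart$ and $\Ho \Sect(F)_\cart$. Since the index category of $F|_{\{[-1]_*\}}$ has no non-identity morphisms, every section is trivially cartesian and $\Ho \Sect(F|_{\{[-1]_*\}})_\cart = \Ho F([-1]_*)$. Second, by Corollary \ref{incl-cofinal} the inclusion $i \colon \bigtriangleup \to \bigtriangleup_*$ is homotopy left cofinal, so another application of Lemma \ref{cart-equiv} produces an equivalence $\bR i^* \colon \Ho \Sect(F)_\cart \xrightarrow{\sim} \Ho \Sect(F')_\cart$. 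Composing the two equivalences gives an equivalence $\Ho F([-1]_*) \xrightarrow{\sim} \Ho \Sect(F')_\cart$.

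It then remains only to identify this composition with the canonical functor in the statement. Using Corollary \ref{value-left-adjoint}, for an objectwise cofibrant $x \in F([-1]_*)$ the value of $\bL j_*(x)$ at $[p]_* \in \bigtriangleup_*$ is the homotopy colimit of the diagram $D_{[p]_*} \colon j/[p]_* \to F([p]_*)$; but $j/[p]_*$ is terminal, so this homotopy colimit is simply $\bL F(\alpha_{[p]_*})(x)$, where $\alpha_{[p]_*} \colon [-1]_* \to [p]_*$ is the unique map. Restricting along $i$ yields the section $[p] \mapsto \bL F(\alpha_{i([p])})(x)$, which is precisely the canonical functor.

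The argument is essentially formal once the two cofinality inputs are in place; the only non-routine step is recognizing that $[-1]_*$ is initial so that the first cofinality is trivially verified, and bookkeeping the identification of the canonical functor with $\bR i^* \circ \bL j_*$ via Corollary \ref{value-left-adjoint}. I do not anticipate a real obstacle beyond this identification.
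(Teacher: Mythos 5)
Your proof is correct and follows essentially the same path as the paper: both use Lemma \ref{cart-equiv} twice, once for the cofinal inclusion $\{[-1]_*\} \hookrightarrow \bigtriangleup_*$ (you supply the short verification that $[-1]_*$ is initial, hence the over-categories are singletons, which the paper asserts without comment) and once for $\bigtriangleup \hookrightarrow \bigtriangleup_*$ via Corollary \ref{incl-cofinal}. The paper packages the conclusion as a two-out-of-three argument in a $2$-commutative triangle, while you compose $\bR i^* \circ \bL j_*$ and then explicitly identify this composite with the canonical functor using Corollary \ref{value-left-adjoint}; these are the same argument viewed from two sides, and your extra identification step is sound.
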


\begin{proof}
Consider the following $2$-commutative diagram:
$$\xymatrix{\Ho \Sect(F')_\cart \ar[r] & \Ho \Sect(F)_\cart \\
\Ho F([-1]_*) \ar[u] \ar[ru]}$$
The inclusion $\{[-1]_*\} \to \bigtriangleup_*$ is homotopy left cofinal,
thus by lemma (\ref{cart-equiv}) the diagonal arrow is an equivalence.
By corollary (\ref{incl-cofinal}) the inclusion $\bigtriangleup \to \bigtriangleup_*$
is homotopy left cofinal thus by lemma (\ref{cart-equiv}) the horizontal arrow is an equivalence.
It follows that the vertical map is also an equivalence which was to be shown.
\end{proof}

\begin{corollary} \label{corollary:diag-equiv}
Let $F \colon \bigtriangleup \times \bigtriangleup \to \ModCat$ be a diagram of model
categories, let $F' \colon \bigtriangleup \to \ModCat$ be the composition of $F$ with
the diagonal $\bigtriangleup \to \bigtriangleup \times \bigtriangleup$. Then the canonical
functor $\Ho \Sect(F)_\cart \to \Ho \Sect(F')_\cart$ is an equivalence.
\end{corollary}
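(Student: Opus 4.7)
The plan is to deduce this corollary as a direct instance of Lemma \ref{cart-equiv} applied to the functor $\mathrm{diag} \colon \bigtriangleup \to \bigtriangleup \times \bigtriangleup$, using the cofinality already established in Corollary \ref{incl-cofinal}.

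First I would identify the canonical functor in the statement. Setting $\omega := F$ and $\omega' := F \circ \mathrm{diag} = F'$ in the notation of the section on diagrams of model categories, the restriction along $\mathrm{diag}$ is a right Quillen functor $\mathrm{diag}^* \colon \Sect(F) \to \Sect(F')$ by construction, and the ``canonical functor'' $\Ho \Sect(F)_\cart \to \Ho \Sect(F')_\cart$ is nothing but the restriction of its right-derived functor $\bR\, \mathrm{diag}^*$ to cartesian sections (and $\bR\, \mathrm{diag}^* = \mathrm{diag}^*$ on objectwise fibrant sections, so no further bookkeeping is needed).

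Next I would invoke the cofinality input. By Corollary \ref{incl-cofinal}, the diagonal functor $\mathrm{diag} \colon \bigtriangleup \to \bigtriangleup \times \bigtriangleup$ is homotopy left cofinal. Hence Lemma \ref{cart-equiv} applies verbatim with $F = \mathrm{diag}$, $\omega = F$, $\omega' = F'$, and yields that the adjoint pair $\bL\, \mathrm{diag}_* \dashv \bR\, \mathrm{diag}^*$ restricts to mutually inverse equivalences
\[
\Ho \Sect(F)_\cart \;\simeq\; \Ho \Sect(F')_\cart.
\]
This is precisely the assertion of the corollary.

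There is essentially no obstacle: all the analytical work was done in Lemma \ref{contr-htp-colimit} (Bousfield--Kan spectral sequence) and in Corollary \ref{incl-cofinal} (contractibility of the slice categories $\mathrm{diag}/([n],[m]) \simeq \Delta[n] \times \Delta[m]$). The only point that warrants a sentence is to note that the hypotheses on the existence of projective model structures on the section categories $\Sect(F)$ and $\Sect(F')$ are tacitly being assumed, as throughout the section — for instance, they hold whenever each $F([n],[m])$ is cofibrantly generated, which is the setting in which the subsequent applications of this corollary take place.
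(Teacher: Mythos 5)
Your proposal is correct and matches the paper's proof exactly: the paper also cites Corollary \ref{incl-cofinal} for the cofinality of the diagonal and then invokes Lemma \ref{cart-equiv} to obtain the equivalence on cartesian sections. Your additional remarks identifying the canonical functor with $\bR\,\mathrm{diag}^*$ and noting the standing hypothesis on existence of projective model structures are accurate but implicit in the paper's one-line proof.
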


\begin{proof}
This follows from lemma (\ref{cart-equiv}) and corollary
(\ref{incl-cofinal}).
\end{proof}

\section{Affine derived group schemes}
\label{hgddf}

The definitions in this section go back to \cite{toen.habil}.

Let $R$ be a commutative $\S$-algebra.

We let $\alpha_i \colon [1] \to [n]$, $i=0,\ldots,n-1$, be the
map $0 \mapsto i$, $1 \mapsto i+1$. We let $s \colon [0] \to [1]$ be
the map $0 \mapsto 0$, $t \colon [0] \to [1]$, $0 \mapsto 1$.
Finally, let $c \colon [1] \to [2]$ be the map $0 \mapsto 0$,
$1 \mapsto 2$.

\begin{definition} \label{defi-derived-affine-gr-scheme}
An {\em affine derived groupoid} over $R$ is a cosimplicial
commutative $R$-$\S$-algebra $A^\bullet$ such that
\begin{enumerate}
\item the canonical map
$$A^1 \otimes^\bL_{t,A^0,s} A^1 \otimes^\bL \cdots \otimes^\bL_{t,A^0,s} A^1
\to A^n$$
($n$ tensor factors on the left hand side)
induced by $\alpha_0, \ldots, \alpha_{n-1}$ is an equivalence,
\item the map
$$A^1 \otimes^\bL_{s,A^0,s} A^1 \to A^2$$
induced by $c, \alpha_0$
is an equivalence.
\end{enumerate}

An {\em affine derived group scheme} over $R$ is a derived affine
groupoid over $R$ such that the map $R \to A^0$ is an equivalence.
\end{definition}

Note that the left hand sides in (1) and (2) for an affine derived group can be
written as an absolute tensor product of $A^1$'s.

\begin{proposition} \label{cech-der}
Let $A \to B$ be a cofibration of commutative $\S$-algebras. Let $A  \to B^\bullet$
be the corresponding coaugmented cosimplicial commutative $\S$-algebra. Then $B^\bullet$
is an affine derived groupoid over $A$ or equivalently over the initial algebra.
\end{proposition}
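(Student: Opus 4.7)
The plan is to reduce both conditions in definition (\ref{defi-derived-affine-gr-scheme}) to the universal property of pushouts in commutative $\S$-algebras. Since $A \to B$ is a cofibration, $B$ is cofibrant as an $A$-algebra, so the strict pushout computes the derived one and $B^n = B \otimes_A B \otimes_A \cdots \otimes_A B$ ($n+1$ factors) is a model for the derived coproduct $B^{\otimes^\bL_A (n+1)}$. Under this identification the coface induced by $\varphi \colon [n] \to [m]$ in $\bigtriangleup$ is the map sending the $i$-th factor of $B^n$ to the $\varphi(i)$-th factor of $B^m$ and inserting $1$'s elsewhere; in particular $s \colon B^0 \to B^1$ is the inclusion into the first tensor factor and $t$ is the inclusion into the second. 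Moreover, since $A \to B$ is a cofibration, so is $B \to B \otimes_A B$ on either side, so the iterated relative tensor products in the statement are homotopy invariant and may be computed strictly as well.

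For condition (1), the iterated tensor product
$$B^1 \otimes^\bL_{t,B^0,s} B^1 \otimes^\bL_{t,B^0,s} \cdots \otimes^\bL_{t,B^0,s} B^1$$
glues $n$ copies of $B \otimes_A B$ along intermediate $B$'s, identifying the second factor of one copy with the first factor of the next. Each such pushout just fuses two factors into one, and after $n-1$ steps one obtains $B^{\otimes^\bL_A (n+1)}$. The map to $B^n$ induced by $\alpha_0, \ldots, \alpha_{n-1}$ matches the $i$-th resulting factor with position $i$ in $B^n$, which is exactly the canonical identification of an iterated coproduct with the $(n+1)$-fold coproduct, hence an equivalence.

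Condition (2) is the same type of computation. The source $B^1 \otimes^\bL_{s,B^0,s} B^1$ glues two copies of $B \otimes_A B$ along a common first factor, yielding a three-fold coproduct $B \otimes_A B \otimes_A B$ in which the shared $B$ occupies position $0$ and the two other factors occupy positions $1$ and $2$. Since both $c$ and $\alpha_0$ send $0$ to $0$, the induced comparison map to $B^2$ is again the natural identification of coproducts, and is an equivalence.

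Finally, the two conditions refer only to the cosimplicial object $B^\bullet$ itself; neither involves $A$, and in particular no equivalence $A \to B^0$ is demanded. Hence $B^\bullet$ is a derived affine groupoid over $A$ if and only if it is one over the initial commutative $\S$-algebra. The only real thing to be careful about is the bookkeeping of which tensor slot corresponds to which element of $[n]$ under the $s, t, c, \alpha_i$; the cofibrancy hypothesis on $A \to B$ removes any subtlety about passing between strict and derived tensor products, so there is no serious obstacle beyond this.
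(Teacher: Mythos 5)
Your proof is correct and follows essentially the same route as the paper's: both identify $B^n$ with the $(n+1)$-fold coproduct $B\otimes_A\cdots\otimes_A B$ and then observe that the iterated relative tensor products of $B^1 = B\otimes_A B$ over $B^0 = B$ along the faces $s,t$ (resp.\ $s,s$ with $c,\alpha_0$) collapse to this coproduct. The only difference is that you spell out the cofibrancy justification for identifying strict with derived pushouts, which the paper leaves implicit.
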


\begin{proof}
The first property follows from the equivalence
$$(B \otimes_A B)\otimes^\bL_{t,B,s} (B \otimes_A B)\otimes^\bL_{t,B,s} \cdots \otimes^\bL_{t,B,s} (B \otimes_A B)
\overset{\simeq}{\longrightarrow} B \otimes_A B \otimes_A \cdots \otimes_A B =
B^n,$$
where in the first entry $n$ tensor factors are used, and in the second entry $n+1$.
The second property follows from the fact that the map
$$(B \otimes_A B) \otimes^\bL_{s,B,s} (B \otimes_A B) \to B \otimes_A B \otimes_A B = B^2$$
induced by the maps $c, \alpha_0$ is an equivalence.
\end{proof}

\begin{proposition} \label{cech-der-equiv}
Let $R \to A$ be a map of commutative $\S$-algebras and $A \to B$ a cofibration of commutative $\S$-algebras.
Let $A  \to B^\bullet$ be the corresponding coaugmented cosimplicial commutative $\S$-algebra.
Suppose the map $R \to B$ is an equivalence. Then $B^\bullet$
is an affine derived group over $R$.
\end{proposition}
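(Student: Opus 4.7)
The plan is to reduce to Proposition \ref{cech-der} and then cash in the hypothesis that $R \to B$ is an equivalence. Proposition \ref{cech-der} already shows that the cosimplicial algebra $B^\bullet$ associated to the cofibration $A \to B$ satisfies both Segal-type conditions (1) and (2) of Definition \ref{defi-derived-affine-gr-scheme}; there it is stated as $B^\bullet$ being an affine derived groupoid over $A$ (equivalently over the initial algebra). The key observation is that these two conditions are intrinsic properties of the cosimplicial commutative $\S$-algebra $B^\bullet$: they assert only that certain derived tensor products of $B^1$'s over $B^0$ map by equivalences to higher $B^n$'s, which is independent of whatever base algebra one views $B^\bullet$ as living over.

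Next I would equip $B^\bullet$ with its cosimplicial commutative $R$-$\S$-algebra structure by composing the given map $R \to A$ with the natural $A$-algebra structure on each $B^n$; since the cosimplicial structure maps are $A$-linear they are in particular $R$-linear, and one has a coaugmentation $R \to B^\bullet$ factoring through $A \to B^\bullet$. The Segal conditions, being intrinsic, are unaffected by this change of base, so $B^\bullet$ is already an affine derived groupoid over $R$.

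Finally, the only extra condition needed to promote this groupoid to an affine derived group scheme over $R$ is that $R \to B^0$ be an equivalence. By construction $B^0$ is the empty coproduct in commutative $\S$-algebras under $B$, i.e.\ just $B$ itself, so this reduces precisely to the hypothesis that $R \to B$ is a weak equivalence. This is the entire argument; there is no genuine obstacle, as the nontrivial content (the Segal conditions) has already been handled by Proposition \ref{cech-der}, and the only new input is the assumed equivalence $R \simeq B$.
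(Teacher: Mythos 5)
Your argument is correct and matches the paper's own (very terse) proof: both invoke Proposition~(\ref{cech-der}) to obtain the groupoid conditions and then cash in the hypothesis that $R \to B = B^0$ is an equivalence. Your extra care in noting that the Segal and group-like conditions are intrinsic to the cosimplicial object and indifferent to the base is a sensible clarification, but it is not a different argument.
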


\begin{proof}
By proposition (\ref{cech-der}) $B^\bullet$ is an affine derived groupoid.
Since $R \to B=B^0$ is an equivalence the result follows.
\end{proof}

\begin{remark}
Let $R \to A$ be a cofibration of commutative $\S$-algebras. Factor the map
$A':= A \otimes_R A \to A$ into a cofibration $A' \to B$ followed by a weak equivalence
$B \to A$. Let $A' \to B^\bullet$ be the coaugmented cosimplicial algebra associated
to $A' \to B$. Then $B^\bullet$ is an affine derived group scheme over $A$, where
we consider $B^\bullet$ as lying over $A$ via any of the two canonical morphisms
$A \to A'$. This can be considered as giving the loop stack $\Spec(B^1) = \Spec(S^1 \otimes^\bL_R A)$
the structure of a derived group scheme over $\Spec(A)$. If now $A \to C$ is a cofibration
such that $R \to C$ is an equivalence then the pushforward of the loop stack
with respect to the map $A \to C$ gives the construction of the affine derived group scheme
given by proposition (\ref{cech-der-equiv}) applied to the datum $R \to A \to C$.
\end{remark}

Let $A^\bullet$ be an affine derived group scheme over some commutative $\S$-algebra
$R$. By a {\em representation} of $A^\bullet$ we understand a homotopy cartesian
$A^\bullet$-module $M^\bullet$. The tensor triangulated category of representations
of $A^\bullet$ is $\D(A^\bullet)_\cart$. A representation is called {\em perfect} if
it is an object of $\Perf(A^\bullet)$.

\begin{definition} \label{jhgyr}
A pro affine derived group scheme over $R$ is a functor $B_\_^\bullet \colon i \mapsto B_i^\bullet$ from
a filtered category $I$ to the category of cosimplicial commutative $R$-$\S$-algebras
such that for each $i \in I$ the cosimplicial algebra $B_i^\bullet$
is an affine derived group scheme over $R$.
\end{definition}
We will write $\text{``$\lim_i$''} B_i^\bullet$ for a pro affine derived group scheme.

The category of representations of a pro affine derived group scheme
$\text{``$\lim_i$''} B_i^\bullet$ is defined to be the $2$-colimit of the representation categories
of the individual affine derived group schemes $B_i^\bullet$, i.e. it is
$\text{$2$-$\colim$}_i \D(B_i^\bullet)_\cart$. This is a tensor triangulated category.
The category $\Perf(\text{``$\lim_i$''} B_i^\bullet)$ is defined to be
$\text{$2$-$\colim$}_i \Perf(B_i^\bullet)$. It is again a tensor triangulated category.

\section{Tate algebras and descent}

Let $A$ be a commutative $\S$-algebra in $\Cpx(\Ab)^\integers$.
We say that $A$ is of {\em Tate-type} if
\begin{enumerate}
\item $A(k) \simeq 0$ for $k>0$,
\item the map $\integers \to A(0)$ is a quasi-isomorphism. 
\end{enumerate}

We further say that $A$ is of {\em bounded} Tate-type
if $A(k)$ is cohomologically bounded from below for each $k<0$.
We say that $A$ is of {\em strict} Tate-type if $A(k)=0$
for $k>0$ and the map $\integers \to A(0)$ is an isomorphism.

It is clear that for any algebra $A$ of Tate-type there is a canonical
algebra $A'$ of strict Tate-type together with a quasi-isomorphism
$A' \to A$.

Let $A$ be an algebra of strict Tate-type. Then there is a canonical
augmentation $A \to \unit$ being the identity in Adams degree $0$.

We associate to this situation an affine derived group scheme in
the following way: factor the map $A \to \unit$ into a cofibration
$A \to B$ followed by a weak equivalence $B \to \unit$. Let $A \to B^\bullet$
be the coaugmented cosimplicial algebra associated to $A  \to B$.
Then by proposition (\ref{cech-der-equiv}) $B^\bullet$ is a derived
affine group scheme over $\unit$.

Notice that for any algebra $A$ of Tate-type there is a canonical
derived pushforward
$U \colon \D(A) \to \D(\unit)\simeq \D(\Ab)^\integers$
by first restricting to the quasi isomorphic
algebra of strict Tate-type $\D(A) \to \D(A')$ and then taking
the derived pushforward along the augmentation.

Let $A$ be an algebra of  Tate-type.
We denote by $\D(A)_{\mathrm{Aba}} \subset \D(A)$
the full tensor triangulated subcategory
consisting of modules $M$
such that $M(r) \simeq 0$ for $r$ big enough.
We call these modules {\em Adams bounded from above}.

\begin{lemma} \label{tate-conservative}
Let $A$ be an algebra of Tate-type.
Then the push forward
$$U \colon \D(A)_{\mathrm{Aba}} \to
\D(\unit) \simeq \D(\Ab)^\integers$$
is conservative, i.e. sends non-zero objects
to non-zero objects.
\end{lemma}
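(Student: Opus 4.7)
The plan is to first reduce to the case of $A$ of strict Tate-type by passing to the quasi-isomorphic replacement $A' \to A$ noted above; restriction induces an equivalence $\D(A) \simeq \D(A')$ that preserves the Adams grading, so $U$ becomes the derived extension of scalars $\unit \otimes_A^\bL(-)$ along the honest augmentation $A \to \unit$. Suppose $M \in \D(A)_{\mathrm{Aba}}$ is non-zero. Then $\{r \in \integers : M(r) \not\simeq 0\}$ is non-empty and bounded above, hence has a largest element $r_0$; I would aim to show $U(M)(r_0) \simeq M(r_0)$, which immediately forces $U(M) \not\simeq 0$ in $\D(\Ab)^\integers$.

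For the key computation I would use the bar simplicial model $B_p = \unit \otimes A^{\otimes p} \otimes M$ whose realization computes $\unit \otimes_A^\bL M$. In Adams degree $r_0$ a contributing summand $A(a_1) \otimes \cdots \otimes A(a_p) \otimes M(s)$ must satisfy $a_1 + \cdots + a_p + s = r_0$ with each $a_i \leq 0$ (strict Tate-type of $A$) and $s \leq r_0$ (choice of $r_0$), hence $a_i = 0$ for all $i$ and $s = r_0$. So each $B_p(r_0)$ is canonically $M(r_0)$, every face and degeneracy acts as the identity in this Adams degree (they use only the strict unit of $A(0) = \integers$ and the augmentation, which is the identity there), and the realization collapses to $M(r_0)$. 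An equivalent repackaging would be the cofiber sequence $A_- \otimes_A^\bL M \to M \to \unit \otimes_A^\bL M$ with $A_- = \ker(A \to \unit)$ concentrated in Adams degrees $< 0$, so that the bar bookkeeping shows $A_- \otimes_A^\bL M$ is Adams bounded above by $r_0 - 1$.

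Combining gives $U(M)(r_0) \simeq M(r_0) \not\simeq 0$, completing the proof by contradiction. The main technical point will be verifying that the bar construction behaves as expected in the $\S$-algebra / $E_\infty$ framework of \cite{km}, where tensor products are twisted by the operad $\caL$ and the monoidal unit is only pseudo-unital. However, the Adams degree constraints collapse every relevant tensor factor onto the strict unit direction $A(0) = \integers$, so the operadic and pseudo-unit subtleties disappear and the entire argument reduces to a purely combinatorial accounting in the Adams grading.
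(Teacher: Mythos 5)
Your proof follows the same conceptual line as the paper's: pass to strict Tate-type, look at the maximal Adams degree $r_0$ with $M(r_0)\not\simeq 0$, and show $U(M)(r_0)\simeq M(r_0)$ by observing that the augmentation acts as the identity in top Adams degree. The Adams-degree bookkeeping you carry out in the bar complex $B_p=\unit\otimes A^{\otimes p}\otimes M$ is exactly the right observation.

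However, the last paragraph of your proposal correctly identifies but does not actually close the technical gap. In the $\S$-module framework of \cite{km}, the monoidal product is $\boxtimes$ and $\integers$ is only a \emph{pseudo}-unit. Consequently $B_p(r_0)$ is not literally $M(r_0)$ but rather $\integers^{\boxtimes p}\boxtimes M(r_0)$, and the pseudo-unitality map $\integers\boxtimes(-)\to(-)$ is a quasi-isomorphism only under cofibrancy hypotheses and is never an isomorphism. Thus the face and degeneracy maps of $B_\bullet(r_0)$ are not ``the identity'' in any strict sense, and the claim that the realization collapses onto $M(r_0)$ does not reduce to a purely combinatorial statement in the Adams grading; it requires an argument that the two-sided bar construction of the pseudo-unit is still equivalent to the module. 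The paper avoids this issue entirely by first replacing the $E_\infty$-algebra $A$ with an \emph{equivalent associative algebra} of strict Tate type, where the tensor product is the ordinary one and there is no pseudo-unit phenomenon, and then working with a cofibrant replacement of $M$ inside the subcategory of modules vanishing in Adams degree $>r_0$ so that the strict tensor $\unit\otimes_A M$ already computes $U(M)$. Your proof would become rigorous if you likewise replaced $A$ by an associative DGA before invoking the bar complex; as written, the final hand-wave stands in for a nontrivial step.
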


\begin{proof}
For the argument we replace the $\S$-algebra
$A$ by an equivalent associative algebra,
also denoted by $A$, since then it is easier to
compute the derived push forward. This can
be done in the way that $A(r) =0$ for $r>0$
and $A(0) = \integers$.
The algebra $A$ has then the canonical augmentation
$e \colon A \to \integers$.
We denote now by $\Mod(A)$ the category of left
$A$-modules.

Now let $M \in \Mod(A)$ be a module which
is Adams bounded from
above and non-equivalent to zero.
Let $r_0$ be the biggest integer such that
$M(r_0)$ is non-equivalent to $0$. Assume
$M$ is cofibrant for the projective model structure
on $\Mod(A)$, so we can compute $U(M)$
by $\unit \otimes_A M=:M'$. Furthermore we can assume
$M(r) =0$ for $r > r_0$ (use the model structure
on $\{M \in \Mod(A) \; | \; M(r)=0, r > r_0\}$ and observe
that a cofibrant object for this model structure is also cofibrant
for the projective model structure on $\Mod(A)$).
Then clearly $M'(r) = 0$ for $r > r_0$ and
$M'(r_0) = M(r_0)$, hence
$M'$ is non-equivalent to $0$, too.
\end{proof}

\begin{lemma} \label{descent-section}
Let $f \colon A \to B$ be a cofibration of commutative $\S$-algebras
and suppose $f$ has a cosection $s$.
Let $A \to B^\bullet$ be the corresponding
coaugmented cosimplicial algebra.
Then the whole category of modules
over $A$ satisfies descent with respect to the
map $f$, i.e. the functor
$$\D(A) \to \D(B^\bullet)_\cart$$
is an equivalence.
\end{lemma}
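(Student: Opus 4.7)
The plan is to recognize the cosection $s$ as equipping the coaugmented cosimplicial algebra $A \to B^\bullet$ with a split structure that promotes it to a pseudo-functor indexed by $\bigtriangleup_*$, and then to invoke proposition (\ref{cofinal-equiv}).

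First I would build a pseudo-functor
\[
\tilde B \colon \bigtriangleup_* \to \Alg, \quad [-1]_* \mapsto A, \quad [p]_* \mapsto B^p \text{ for } p \geq 0,
\]
which restricts on $\bigtriangleup_+ \subset \bigtriangleup_*$ to the given coaugmented cosimplicial algebra $A \to B^\bullet$. The extra morphisms of $\bigtriangleup_*$ (those sending some non-basepoint element to $*$) are all generated by the collapse maps $[p]_* \to [r]_*$ that are the identity on $\{0, \ldots, r-1\}$ and send $\{r, \ldots, p\}$ to $*$; any morphism $g \colon [p]_* \to [q]_*$ in $\bigtriangleup_*$ factors canonically as such a collapse followed by a map in $\bigtriangleup_+$. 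On algebras, the collapse $B^p \to B^r$ is obtained by applying the $A$-algebra cosection $s$ to the last $p-r$ tensor factors of $B^p = B \otimes_A \cdots \otimes_A B$, with the extreme case $B^p \to A$ given by $b_0 \otimes \cdots \otimes b_p \mapsto s(b_0) \cdots s(b_p)$. Well-definedness on the $A$-balanced tensor products and functoriality follow formally from $s \circ f = \id_A$, the commutativity of the algebras involved, and the fact that $s$ is a morphism of $\S$-algebras.

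Next I would post-compose $\tilde B$ with the assignment sending a commutative $\S$-algebra $R$ to its projective model category $\Mod(R)$. This yields a pseudo-functor $F \colon \bigtriangleup_* \to \ModCat$ with $F([-1]_*) = \Mod(A)$ and $F([p]_*) = \Mod(B^p)$ whose transition morphisms are base change left Quillen functors. Since $f$ is a cofibration, every algebra map in the $\bigtriangleup$-part of $\tilde B$ is a pushout of $f$ and hence a cofibration, so the transition functors are indeed left Quillen.

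Finally I would apply proposition (\ref{cofinal-equiv}) to $F$: the canonical comparison
\[
\Ho F([-1]_*) \longrightarrow \Ho \Sect(F|_\bigtriangleup)_\cart
\]
is an equivalence, which identifies $\D(A)$ with $\D(B^\bullet)_\cart$ as claimed. The main technical hurdle lies in the first step, namely verifying that the collapse maps really do assemble into a pseudo-functor on $\bigtriangleup_*$; but this is classical content packaged by the category $\bigtriangleup_*$ and amounts to the familiar fact that a split coaugmented cosimplicial object is precisely a functor out of $\bigtriangleup_*$.
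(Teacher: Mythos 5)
Your argument is essentially the same as the paper's: both extend the coaugmented cosimplicial algebra to a functor out of $\bigtriangleup_*$ using the cosection $s$ (the paper does this directly for a general pointed map $\varphi$, you via a factorization through collapse maps) and then conclude by proposition (\ref{cofinal-equiv}). One small slip: the collapse map that is the identity on $\{0,\dots,r-1\}$ and sends $\{r,\dots,p\}$ to $*$ has codomain $[r-1]_*$, not $[r]_*$, but this does not affect the argument.
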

\begin{proof}
Define a map $(I \setminus \{*\}) \otimes_A B
\to (J \setminus \{*\}) \otimes_A B$
for a map between pointed finite sets $\varphi \colon I \to J$
by sending the summands in $(I \setminus \{*\}) \otimes_A B$
corresponding to preimages of $*$ under $\varphi$ to
the target by first projecting down to $A$ via $s$
and then using the structure map.
On the remaining summands the map is defined
by application of $\varphi$.
By this procedure one gets a functor
$\Phi \colon \bigtriangleup_* \to \Alg(\caC)$,
$[p]_* \to [p] \otimes_A B$,
extending the coaugmented algebra $A \to B^\bullet$.

By proposition (\ref{cofinal-equiv}) it follows that the functor
$\D(A) \to \D(B^\bullet)_\cart$ is an equivalence,
which was to be shown.
\end{proof}

\begin{lemma} \label{descent-unit}
Let $f \colon A \to B$ be a cofibration of commutative $\S$-algebras
and $A \to B^\bullet$ be the corresponding
coaugmented cosimplicial algebra
(i.e. $B^n=B \otimes_A \cdots \otimes_A B$ ($n+1$ times)).
Let $B \to B'^\bullet$ be the pushforward
of $A \to B^\bullet$ along $f$.
Let $\C \subset \D(A)$ be a full triangulated subcategory such that
the following properties are fulfilled:
\begin{enumerate}
\item The functor $\D(B^\bullet) \to \D(B'^\bullet)$
preserves total objects for modules in $\D(B^\bullet)$
which are in the image of the composition
$\C \to \D(A) \to \D(B^\bullet)$, in the
sense that for
$M \in \C$, the natural map
$$\Tot_A(M \otimes_A^\bL B^\bullet) \otimes_A^\bL B
\to \Tot_B(M \otimes_A^\bL B^\bullet \otimes_A^\bL B)$$
is an isomorphism.
\item The functor $\Tot_A \circ (\_ \otimes^\bL_A B^\bullet)$
maps $\C$ to itself.
\item The composition $\C \to \D(A) \to \D(B)$
is conservative.
\end{enumerate}
Then $f$ satisfies descent for modules in $\C$ in the sense
that the unit for the adjunction between $\D(A)$
and $\D(B^\bullet)$ is an isomorphism on objects
from $\C$.
\end{lemma}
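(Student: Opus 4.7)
The plan is to show that for every $M \in \C$ the cofiber $C := \cofib(\eta_M)$ of the unit $\eta_M \colon M \to \Tot_A(M \otimes^\bL_A B^\bullet)$ vanishes in $\D(A)$. Hypothesis (2) puts the target in $\C$, and since $\C$ is triangulated $C \in \C$ as well; by hypothesis (3) it therefore suffices to establish that $\eta_M \otimes^\bL_A B$ is an equivalence in $\D(B)$.

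The next step is to identify $B \to B'^\bullet$, the pushforward of $A \to B^\bullet$ along $f$, with the $\check{\mathrm{C}}$ech cosimplicial algebra of the canonical map $g \colon B \to B \otimes_A B$. Indeed, $B'^n = B \otimes_A B^n \cong (B \otimes_A B)^{\otimes_B (n+1)}$ with matching cosimplicial structure, and $g$ is a cofibration since it is the pushout of the cofibration $f$ along itself. A direct tensor calculation then gives $N \otimes^\bL_B B'^\bullet \simeq M \otimes^\bL_A B^\bullet \otimes^\bL_A B$ for $N := M \otimes^\bL_A B$. Combining this with hypothesis (1), which identifies $\Tot_A(M \otimes^\bL_A B^\bullet) \otimes^\bL_A B$ with $\Tot_B(M \otimes^\bL_A B^\bullet \otimes^\bL_A B)$, realizes $\eta_M \otimes^\bL_A B$ as the unit of the adjunction $\_ \otimes^\bL_B B'^\bullet \dashv \Tot_B$ evaluated at $N$.

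Finally, $g$ admits a cosection, namely the multiplication $B \otimes_A B \to B$, so Lemma (\ref{descent-section}) applies directly and yields that the functor $\D(B) \to \D(B'^\bullet)_\cart$ is an equivalence; in particular the unit of this adjunction is an equivalence on every object of $\D(B)$, including $N$. Hence $\eta_M \otimes^\bL_A B$ is an equivalence, $C \otimes^\bL_A B \simeq 0$, and invoking (3) gives $C \simeq 0$. The only step requiring real care is the identification in the middle paragraph of $B \to B'^\bullet$ with the $\check{\mathrm{C}}$ech resolution of $g$ together with the compatibility of the two adjunction units under base change along $f$; both reduce to routine bookkeeping with the symmetric monoidal structure and the coaugmentation, but they are what make the three hypotheses line up with Lemma (\ref{descent-section}).
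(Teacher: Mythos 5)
Your argument is correct and follows the same route as the paper: reduce via hypotheses (2) and (3) to checking the unit after $\bL f_* = \_\otimes^\bL_A B$, use (1) to rewrite the resulting map as the descent unit for the coaugmented algebra $B\to B'^\bullet$, and conclude by Lemma (\ref{descent-section}) since $g\colon B\to B\otimes_A B$ is a cofibration admitting the multiplication as cosection. You have simply spelled out the bookkeeping (the cofiber-in-$\C$ step and the identification $N\otimes^\bL_B B'^\bullet\simeq M\otimes^\bL_A B^\bullet\otimes^\bL_A B$) that the paper leaves implicit.
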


\begin{proof}
By properties (2) and (3) we can test the unit to be
an isomorphism on an object $M \in \C$
after applying $\bL f_*$.
By property (1) this amounts to saying
that $\bL f_* M$ satisfies descent
with respect to the coaugemented cosimplicial
algebra $B \to B'^\bullet$.
This is clearly fulfilled
by Lemma \ref{descent-section}.
\end{proof}

\begin{lemma} \label{descent-counit}
Let $f \colon A \to B$, $A \to B^\bullet$
and $B \to B'^\bullet$ be as in Lemma \ref{descent-unit}.
Let $\D \subset \D(B^\bullet)_\cart$ be a full triangulated
subcategory such that the following properties are fulfilled:
\begin{enumerate}
\item The functor $\D(B^\bullet) \to \D(B'^\bullet)$
preserves total objects for modules in $\D$
in the sense that for
$M \in \D$, the natural map
$$\Tot_A(M) \otimes_A^\bL B
\to \Tot_B(M \otimes_A^\bL B)$$
is an isomorphism.
\item The functor $(\_ \otimes^\bL_A B^\bullet) \circ \Tot_A$
maps $\D$ to itself.
\end{enumerate}
Then $f$ satisfies descent for modules in $\D$ in the sense
that the counit for the adjunction between $\D(A)$
and $\D(B^\bullet)$ is an isomorphism on objects
from $\D$.
\end{lemma}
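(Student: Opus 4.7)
My plan is to mimic the proof of Lemma \ref{descent-unit}, but to replace the base change $\bL f_*$ used there by evaluation at $[0]$. Fix $M \in \D$ and consider the counit
$$\Tot_A(M) \otimes_A^\bL B^\bullet \to M$$
in $\D(B^\bullet)$. Both its source and target are cartesian: the target is cartesian because $M \in \D \subset \D(B^\bullet)_\cart$; the source $\Tot_A(M) \otimes_A^\bL B^\bullet$ is the base change of an object of $\D(A)$ and is thus automatically cartesian (condition (2) moreover shows that it lies in $\D$). Since a morphism between cartesian objects is an isomorphism if and only if it is an isomorphism after evaluation at $[0]$, it suffices to prove that at level $[0]$ the map $\Tot_A(M) \otimes_A^\bL B \to M^0$ is an isomorphism in $\D(B)$.

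Next I would invoke condition (1) to replace $\Tot_A(M) \otimes_A^\bL B$ by $\Tot_B(M \otimes_A^\bL B)$; thus the problem reduces to showing that the natural map
$$\Tot_B(M \otimes_A^\bL B) \to M^0$$
(coming from the projection onto the $[0]$-component followed by the $B$-action on $M^0$) is an isomorphism. Here $M \otimes_A^\bL B$ is an object of $\D(B'^\bullet)$, and the coaugmentation $B \to B'^\bullet$ admits a cosection given cosimplicially by the multiplication $B \otimes_A B \to B$. Lemma \ref{descent-section} therefore yields an equivalence $\D(B) \simeq \D(B'^\bullet)_\cart$ whose inverse is $\Tot_B$.

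Using the cartesianness of $M$ one readily identifies $M \otimes_A^\bL B$ with $M^0 \otimes_B^\bL B'^\bullet$ in $\D(B'^\bullet)_\cart$; under the descent-section equivalence the object $M^0$ thus corresponds to $M \otimes_A^\bL B$, giving $\Tot_B(M \otimes_A^\bL B) \simeq M^0$. I expect the main subtlety to lie in identifying this abstract isomorphism with the concrete map $\Tot_B(M \otimes_A^\bL B) \to M^0$ produced by the counit above. This naturality check can be carried out from the description of the descent-section equivalence in terms of the extension to a $\bigtriangleup_*$-diagram constructed in its proof.
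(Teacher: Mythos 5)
Your proposal is correct in outline but takes a genuinely different reduction from the paper's. The paper tests the counit $\Tot_A(M) \otimes^\bL_A B^\bullet \to M$ after applying the base change $\_ \otimes^\bL_A B = \_ \otimes^\bL_{B^\bullet} B'^\bullet \colon \D(B^\bullet)_\cart \to \D(B'^\bullet)_\cart$, which is conservative because $B \to B' = B \otimes_A B$ has a retraction. After this base change, property (1) together with the standard triangle identity for composable adjunctions identifies the resulting morphism in $\D(B'^\bullet)$ with the counit of the adjunction $\D(B) \rightleftarrows \D(B'^\bullet)$ applied to $M \otimes^\bL_A B$, so Lemma \ref{descent-section} applies directly. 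You instead test after evaluation at $[0]$ (also conservative on cartesian objects), dropping down to a map in $\D(B)$. This is perfectly legitimate, and your identification $M \otimes^\bL_A B \simeq M^0 \otimes^\bL_B B'^\bullet$ is correct; but, as you flag yourself, you are then left with the extra step of matching the resulting map $\Tot_B(M \otimes^\bL_A B) \to M^0$ with the inverse of the unit iso provided by Lemma \ref{descent-section}. That check amounts to the naturality square relating the two counits at cosimplicial level $0$ together with compatibility of the level-$0$ base change iso; it is not difficult, but the paper's route sidesteps it entirely by remaining in the cosimplicial world, where the relevant identification is the formal adjunction triangle for $\sigma L \simeq L' L_0$ and $R \tau \simeq R_0 R'$. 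One small imprecision: your description of the map $\Tot_B(M \otimes^\bL_A B) \to M^0$ as ``projection onto the $[0]$-component followed by the $B$-action'' is not quite the map that condition (1) transports from the level-$0$ counit; this is exactly the identification that needs the naturality check, so it should not be asserted as the starting description.
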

\begin{proof}
Note first that the functor $\D(B^\bullet)_\cart \to \D(B'^\bullet)_\cart$
is conservative since the map $B \to B'=B \otimes_A B$ has a retraction.
Thus by property (3) we can test if the counit is
an isomorphism on an object $M \in \D$
after applying $\_ \otimes^\bL_A B = \_ \otimes^\bL_{B^\bullet}
B'^\bullet$.
By property (1) this amounts to saying
that $M\otimes^\bL_A B$ satisfies descent
with respect to the coaugmented cosimplicial
algebra $B \to B'^\bullet$,
which is again fulfilled
by Lemma \ref{descent-section}.
\end{proof}

\begin{lemma} \label{real-alg-commute}
Let $A_\bullet$ be a simplicial commutative $\S$-algebra in $\Cpx(\Ab)^\integers$
and $|A|_{\mathrm{comm}}$ be its realization
in commutative $\S$-algebras which is equivalent to
the homotopy colimit of $A_\bullet$.
Let $A^\sharp_\bullet$ be the underlying simplicial
module and $|A |_{\mathrm{comm}}^\sharp$
the underlying module of $|A|_{\mathrm{comm}}$.
Then the natural map
$$|A^\sharp_\bullet | \to |A |_{\mathrm{comm}}^\sharp$$
is an isomorphism in $\D(\Ab)^\integers$.
\end{lemma}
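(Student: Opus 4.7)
The plan is to reduce the statement to the case of free commutative $\S$-algebras, where the identification is immediate, and to use that the forgetful functor from commutative $\S$-algebras to $\S$-modules is monadic with a monad preserving geometric realizations. First I would Reedy-cofibrantly replace $A_\bullet$ as a simplicial commutative $\S$-algebra. After this replacement both sides of the map can be computed by an unnormalized chain complex of the underlying simplicial diagram (as recalled at the end of section \ref{prelim}), so the comparison becomes the question whether the underlying module functor commutes with the realization $|{-}|$.

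Second, I would invoke the free-forgetful adjunction $T \dashv U$ between $\Cpx(\Ab)^\integers$ and commutative $\S$-algebras, where $T(M) = \bigoplus_{n\ge 0}(M^{\boxtimes n})_{h\Sigma_n}$. By a standard monadic argument, it suffices to check that $T$ preserves geometric realizations of objectwise cofibrant simplicial objects in $\Cpx(\Ab)^\integers$. One then resolves $A_\bullet$ by the two-sided bar construction $B_\bullet(T,T,A_\bullet)$ of free $T$-algebras. On free algebras the statement is transparent since $T$ preserves realizations; the extra-degeneracy argument identifies the realization of this resolution with $|A_\bullet|_{\mathrm{comm}}$ on the algebra side and with $|A_\bullet^\sharp|$ on the underlying-module side.

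To verify that $T$ preserves realizations I would break it into pieces: direct sums and strict $\Sigma_n$-quotients commute with all colimits, and the iterated pseudo-tensor power $M \mapsto M^{\boxtimes n}$ preserves realizations of objectwise cofibrant simplicial objects because $\boxtimes$ commutes with colimits in each variable and a diagonal identification turns the resulting bisimplicial realization into a simplicial one. The main obstacle is the cofibrancy and flatness bookkeeping intrinsic to the Kriz--May setting: one must know that on cofibrant inputs the strict symmetric power $(M^{\boxtimes n})_{\Sigma_n}$ models the homotopy orbits, and that tensor powers of cofibrant $\S$-modules stay homotopically well behaved. These are precisely the flatness properties established in \cite{mandell.flat} and \cite{km}; once they are cited, the formal monadic reduction above gives the lemma.
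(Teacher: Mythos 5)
The paper's proof is a one-line citation of \cite[VII, Prop.~3.3]{ekmm}, which is precisely the statement that the forgetful functor from commutative $R$-algebras to $R$-modules preserves geometric realizations of simplicial objects, transported here from the topological setting to the algebraic $\S$-module framework of \cite{km}. Your proposal does not cite this but reconstructs the argument that lies behind it: the monadic free--forgetful adjunction $T\dashv U$, the two-sided bar resolution $B_\bullet(T,T,A_\bullet)$ by free algebras, and the reduction to showing that $T$ preserves realizations of objectwise cofibrant simplicial objects. That is indeed the standard route to the EKMM result, so the underlying mathematics agrees; the difference is that you re-derive what the paper simply references, which makes your proof correct in outline but considerably less economical.

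One formula should be corrected. In the Kriz--May $\boxtimes$ framework the free commutative $\S$-algebra monad is $T(M)=\bigoplus_{n\ge 0}(M^{\boxtimes n})_{\Sigma_n}$ with \emph{strict} $\Sigma_n$-orbits, not homotopy orbits $h\Sigma_n$; the whole point of the $\caL$-operad machinery is that strict symmetric powers of cofibrant $\S$-modules already have the correct homotopy type. You state this correctly in your third paragraph (and this is exactly what the flatness results of \cite{mandell.flat} provide), but the displayed formula for $T$ is inconsistent with it. Also, the extra-degeneracy step deserves one more sentence of care: $B_\bullet(T,T,A_\bullet)$ is bisimplicial, and one must realize in both directions (or pass to the diagonal) to identify the result with $|A_\bullet|_{\mathrm{comm}}$ on the algebra side and $|A_\bullet^\sharp|$ on the module side; this is where the preservation-of-realizations claim for $T$ is actually invoked. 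With these adjustments the outline is sound.
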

\begin{proof}
This follows from \cite[VII., Prop. 3.3]{ekmm}.
\end{proof}

\begin{lemma} \label{htp-b1}
Let $A$ be an algebra of strict Tate-type and
$e \colon A \to \unit$ the corresponding augmentation.
Let $A \to B \to \unit$ be a factorization of $e$
into a cofibration followed by a weak equivalence.
Let $B^\bullet$ be the cosimplicial algebra
associated to $A \to B$. Then the homotopy type
of the algebra $B^1$ is the same as the realization of the
simplicial algebra $(A \otimes^\bL \Delta^1)
\otimes^\bL_{A \otimes^\bL \partial \Delta^1}
(\unit \otimes^\bL \partial \Delta^1)$, where the map $A \otimes^\bL \partial \Delta^1
\to \unit \otimes^\bL \partial \Delta^1$ is $e \otimes^\bL e$.
\end{lemma}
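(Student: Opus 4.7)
The plan is to show that both sides of the claimed equivalence model the derived tensor product $\unit \otimes^\bL_A \unit$. For the left-hand side this is almost immediate: since $A \to B$ is a cofibration of commutative $\S$-algebras replacing the augmentation $A \to \unit$, the pushout $B^1 = B \otimes_A B$ is a cofibrant model for the homotopy pushout of $\unit \leftarrow A \to \unit$, that is, for $\unit \otimes^\bL_A \unit$.

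For the right-hand side, I would analyze the simplicial commutative $\S$-algebra
$$X_\bullet := (A \otimes^\bL \Delta^1) \otimes^\bL_{A \otimes^\bL \partial \Delta^1} (\unit \otimes^\bL \partial \Delta^1)$$
levelwise. Using that $\Delta^1_n$ has $n+2$ elements and $\partial \Delta^1_n$ has $2$ (the two constant maps), and that the inclusion $\partial \Delta^1 \hookrightarrow \Delta^1$ at level $n$ picks out the $0$-th and $(n+1)$-st tensor factors, the derived pushout at level $n$ simplifies:
$$X_n \simeq A^{\otimes^\bL(n+2)} \otimes^\bL_{A \otimes^\bL A} \unit \simeq (A \otimes^\bL_A \unit) \otimes^\bL A^{\otimes^\bL n} \otimes^\bL (A \otimes^\bL_A \unit) \simeq A^{\otimes^\bL n},$$
the outer factors being collapsed via $e$. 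The face and degeneracy maps, inherited from those of $\Delta^1$, match those of the two-sided bar construction $B_\bullet(\unit,A,\unit)$: the faces $d_0$ and $d_n$ apply $e$ to the outermost factors, while interior $d_i$ multiply adjacent factors. Hence $X_\bullet$ is identified with $B_\bullet(\unit,A,\unit)$.

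It is classical that the realization of $B_\bullet(\unit,A,\unit)$ in commutative $\S$-algebras models $\unit \otimes^\bL_A \unit$; by Lemma \ref{real-alg-commute} this can be verified on underlying modules, where it reduces to the standard bar resolution of $\unit$ as an $A$-module. Combined with the first paragraph, both sides agree with $\unit \otimes^\bL_A \unit$, proving the lemma. The main technical obstacle is to ensure that the levelwise equivalences $X_n \simeq A^{\otimes^\bL n}$ assemble coherently with the simplicial structure, so that the simplicial equivalence $X_\bullet \simeq B_\bullet(\unit,A,\unit)$ descends to a realization-level equivalence; here passing to underlying modules via Lemma \ref{real-alg-commute} is the cleanest route, since it avoids the complications of strict versus derived pushouts at the algebra level and lets the comparison be carried out with classical module-theoretic bar constructions.
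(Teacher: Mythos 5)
Your proof is correct in substance but takes a genuinely different, more computational route than the paper. You unwind $X_\bullet := (A \otimes^\bL \Delta^1) \otimes^\bL_{A \otimes^\bL \partial\Delta^1} (\unit \otimes^\bL \partial\Delta^1)$ levelwise, recognize it as the two-sided bar construction $B_\bullet(\unit,A,\unit)$, and invoke the classical fact that its realization models $\unit\otimes^\bL_A\unit$. The paper instead argues abstractly: it observes that $A\otimes^\bL\Delta^1$ is a simplicial $A\otimes^\bL A$-algebra whose realization is $A$ (contractibility of $\Delta^1$), and that realization, being a homotopy colimit, commutes with pushforward along any algebra map $A\otimes^\bL A\to C$; hence $|X_\bullet|\simeq A\otimes^\bL_{A\otimes^\bL A}(\unit\otimes^\bL\unit)\simeq\unit\otimes^\bL_A\unit$. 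The paper's argument buys you exactly the coherence you flagged as the "main technical obstacle": because you never pass through levelwise identifications, you never need to check that the equivalences $X_n\simeq A^{\otimes^\bL n}$ are simplicially compatible. Your resolution via Lemma~(\ref{real-alg-commute}) is a reasonable instinct but is not really the point of that lemma (which only exchanges realization with the forgetful functor, and is used in the paper for the \emph{next} corollary); the cleanest fix is either to observe that $B_\bullet(\unit,A,\unit)$ is by definition equal (not just levelwise equivalent) to $(A\otimes\Delta^1)\otimes_{A\otimes\partial\Delta^1}(\unit\otimes\partial\Delta^1)$ once the cotensor notation is unwound, or to adopt the paper's abstract realization-commutes-with-base-change argument, which avoids the comparison altogether.
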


\begin{proof}
We first note that the realization of the simplicial algebra
$A \otimes^\bL \Delta^1$ is $A$. The simplicial algebra
$A \otimes^\bL \Delta^1$ is a simplicial object in
$A \otimes^\bL \partial \Delta^1 \simeq A \otimes^\bL A$-algebras.
Realization of such algebras commutes with pushforward
along any map $A \otimes^\bL A \to C$ of algebras.
Thus we see that the realization of $(A \otimes^\bL \Delta^1)
\otimes^\bL_{A \otimes^\bL \partial \Delta^1}
(\unit \otimes^\bL \partial \Delta^1)$ is equivalent
to $A \otimes^\bL_{A \otimes^\bL A} (\unit \otimes^\bL \unit)$ which
in turn is equivalent to $\unit \otimes^\bL_A \unit$.
\end{proof}

\begin{corollary} \label{htp-module-b1}
Let $A$ be an algebra of strict Tate-type and
$e \colon A \to \unit$ the corresponding augmentation.
Let $A \to B \to \unit$ be a factorization of $e$
into a cofibration followed by a weak equivalence.
Let $B^\bullet$ be the cosimplicial algebra
associated to $A \to B$. Then the homotopy type
of the underlying module of
$B^1$ is the same as the realization of the simplicial module
$[n] \mapsto A^{\otimes^\bL \Delta^1(n)} \otimes^\bL_{A \otimes^\bL A}
(\unit \otimes^\bL \unit)$.
\end{corollary}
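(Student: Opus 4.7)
The plan is to combine Lemma \ref{htp-b1} with Lemma \ref{real-alg-commute}. By Lemma \ref{htp-b1}, $B^1$ is equivalent as a commutative $\S$-algebra to the realization of the simplicial algebra
\[
S_\bullet \colon [n] \mapsto (A \otimes^\bL \Delta^1)_n \otimes^\bL_{(A \otimes^\bL \partial\Delta^1)_n} (\unit \otimes^\bL \partial\Delta^1)_n.
\]
First I would unpack this in each simplicial degree $n$. Since $\partial\Delta^1_n$ consists of the two constant maps $[n] \to [1]$ for every $n \ge 0$, one has $(A \otimes^\bL \partial\Delta^1)_n \simeq A \otimes^\bL A$ and $(\unit \otimes^\bL \partial\Delta^1)_n \simeq \unit \otimes^\bL \unit$, while $(A \otimes^\bL \Delta^1)_n = A^{\otimes^\bL \Delta^1(n)}$ by definition of the tensor of an algebra with a simplicial set.

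The pushout defining $S_n$ is taken in commutative $\S$-algebras, but its underlying module coincides with the corresponding relative tensor product of underlying modules (the standard fact that $\otimes^\bL_{A \otimes^\bL A}$ is computed identically in commutative algebras and in modules). Hence the underlying simplicial module $S^\sharp_\bullet$ is precisely
\[
[n] \mapsto A^{\otimes^\bL \Delta^1(n)} \otimes^\bL_{A \otimes^\bL A} (\unit \otimes^\bL \unit).
\]
Applying Lemma \ref{real-alg-commute} to $S_\bullet$ then identifies the underlying module of $|S_\bullet|_{\mathrm{comm}} \simeq B^1$ with the realization $|S^\sharp_\bullet|$, which is the desired description.

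There is no substantive obstacle here: the corollary is a direct combination of Lemmas \ref{htp-b1} and \ref{real-alg-commute}. The only point that warrants explicit attention is the degreewise identification of the underlying module of the algebra pushout with the relative tensor product of modules, and this is standard for commutative $\S$-algebras.
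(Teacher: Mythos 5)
Your proof is correct and follows exactly the paper's route: the paper's proof is the one-liner "This follows from Lemma \ref{htp-b1} with Lemma \ref{real-alg-commute}," and you have simply supplied the (correct) unpacking of the degreewise terms and the standard identification of the underlying module of a pushout of commutative $\S$-algebras with the relative tensor product of modules.
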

\begin{proof}
This follows from Lemma \ref{htp-b1} with Lemma \ref{real-alg-commute}.
\end{proof}

\begin{lemma} \label{htp-b1-descr}
Let $A$ be an algebra of strict Tate-type and
$e \colon A \to \unit$ the corresponding augmentation.
Let $A'$ be a replacement of $A$ as a strictly associative
algebra of strict Tate-type such that for
any $k$ the complex $A'(k)$ is cofibrant
(which always exists by using e.g. a model
structure on augmented strictly associative
algebras).
Set $\overline{A}:= A'/\unit$.
Then the realization of the simplicial module
$[n] \mapsto A^{\otimes^\bL \Delta^1(n)} \otimes^\bL_{A \otimes^\bL A}
(\unit \otimes^\bL \unit)$ is computed as the
total complex of a bicomplex of the form
$$\unit \leftarrow \overline{A} \leftarrow
\overline{A}^{\otimes 2} \leftarrow
\overline{A}^{\otimes 3} \leftarrow \cdots \text{,}$$
where in the last line the bare tensor product
in $\Cpx(\Ab)^\integers$ is used.
\end{lemma}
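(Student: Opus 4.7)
The plan is to identify the given simplicial module with (a version of) the two-sided bar construction $B_\bullet(\unit,A',\unit)$ and then pass to the normalized complex. Because $A'(k)$ is cofibrant in $\Cpx(\Ab)$ for every $k$ and $A'$ is strictly associative, the derived tensor powers $A^{\otimes^\bL \Delta^1(n)}$ can be computed as strict tensor powers $A'^{\otimes \Delta^1([n])}$ (Loday construction). The simplicial set $\Delta^1$ has $(\Delta^1)_n = \Hom_\bigtriangleup([n],[1])$ of cardinality $n+2$, with the two constant maps $0$ and $1$ forming $\partial\Delta^1$, and $A'^{\otimes \partial\Delta^1([n])} = A' \otimes A'$ (the simplicial algebra $A \otimes^\bL \partial\Delta^1$ being constant on $A' \otimes A'$). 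Similarly $\unit \otimes^\bL \partial\Delta^1$ is constant on $\unit\otimes\unit$.

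First I would show that in each simplicial degree $n$,
$$A'^{\otimes \Delta^1([n])} \otimes_{A' \otimes A'}(\unit \otimes \unit)\;\cong\;\unit \otimes A'^{\otimes n}\otimes \unit \;\cong\; A'^{\otimes n},$$
by singling out the two ``boundary'' factors indexed by the constant maps $[n]\to\{0\}$ and $[n]\to\{1\}$ and tensoring them down to $\unit$ via the augmentation $e$. A direct check of the simplicial structure (faces come from face maps of $\Delta^1$ which either collapse adjacent factors by multiplication in $A'$ or push an extreme factor across the augmentation, degeneracies insert the unit $1\in A'$) shows that the resulting simplicial module is exactly the two-sided bar construction $B_\bullet(\unit,A',\unit)$ of the right/left $A'$-module $\unit$ obtained from $e$. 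This is the crucial step and the main obstacle: one has to match the face and degeneracy maps coming from the Loday-type construction with those of the usual bar construction; this uses the strict associativity of $A'$ to interpret multiplication-by-adjacent-factors as a true strict operation (not just up to homotopy), which is why the hypothesis that $A'$ be strictly associative (and that each $A'(k)$ be cofibrant) is made.

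Having identified the simplicial module with $B_\bullet(\unit,A',\unit)$, its realization is computed (as recalled in section \ref{prelim}) by the unnormalized total complex $\Total^\oplus(\UC B_\bullet(\unit,A',\unit))$. By the standard normalization quasi-isomorphism for simplicial objects in an additive category, this is quasi-isomorphic to the normalized complex in which each occurrence of $A'$ in a tensor position gets replaced by $\overline{A} = A'/\unit$, precisely because the degeneracies of the bar construction insert $1$. The resulting bicomplex (in the Adams graded direction one simply reads off the Adams degree) takes the form
$$\unit \leftarrow \overline{A} \leftarrow \overline{A}^{\otimes 2} \leftarrow \overline{A}^{\otimes 3} \leftarrow \cdots$$
with differentials being the alternating sums of the bar face maps, which after normalization are just the multiplications $\overline{A}^{\otimes j}\otimes \overline{A}^{\otimes (n-j)} \to \overline{A}^{\otimes(n-1)}$ induced from the multiplication on $A'$. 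This is exactly the claimed bicomplex.
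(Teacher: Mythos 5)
Your proof is correct and follows essentially the same two-step route as the paper: identify the given simplicial module with the two-sided bar complex $B_\bullet(\unit,A',\unit)$ of the strictly associative replacement (the paper phrases this as comparing the bar construction for associative $\S$-algebras with that for strictly associative algebras, citing EKMM IV.7.2), and then pass to the normalized chain complex, where degeneracies insert $1 \in A'$ so the quotient in simplicial degree $n$ is $\overline{A}^{\otimes n}$. You merely reverse the order of the two steps and are slightly more explicit about the degreewise matching of faces and degeneracies. One small point to keep in mind: the passage from the $\S$-module derived tensor products $\otimes^\bL$ (i.e. $\boxtimes^\bL$) to ordinary strict tensor products of the $A'(k)$ relies not only on cofibrancy of the $A'(k)$ but also on the comparison between $\boxtimes$ and $\otimes$ in the $\S$-module formalism (of the kind underlying Lemma \ref{real-alg-commute}); the paper folds this into its cited ``comparison of bar complexes,'' whereas you absorb it into the phrase ``(Loday construction).'' Substantively, though, you have supplied the same argument.
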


\begin{proof}
In fact applying degreewise the normalized
associated chain complex to
the two-sided simplicial Bar-complex
$B_*(\unit,A',\unit)$ which is defined
analogously to \cite[IV., Definition 7.2]{ekmm}
yields exactly a bicomplex of the type indicated.
Comparing the two-sided Bar-complex
for associative $\S$-algebras and
strictly associative algebras yields an equivalence
between the realization of $B_*(\unit,A',\unit)$
and that of the simplicial module
$[n] \mapsto A^{\otimes^\bL \Delta^1(n)} \otimes^\bL_{A \otimes^\bL A}
(\unit \otimes^\bL \unit)$.
\end{proof}

\begin{corollary} \label{uniform-bound}
Let $A$ be an algebra of bounded strict Tate-type and
$e \colon A \to \unit$ the corresponding augmentation.
Let $A \to B \to \unit$ be a factorization of $e$
into a cofibration followed by a weak equivalence.
Let $B^\bullet$ be the cosimplicial algebra
associated to $A \to B$. Then all algebras $B^n$
are of bounded Tate-type. Moreover for any $k$
the complexes $B^n(k)$ are uniformly in $n$
cohomologically bounded from below.

The same is true for the cobase change $B'^\bullet$
of $B^\bullet$ along $A \to B$.
\end{corollary}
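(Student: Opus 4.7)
The plan is to reduce the corollary to a careful Adams-degree analysis: first pin down $B^1$ via the bicomplex of Lemma \ref{htp-b1-descr}, and then bootstrap to arbitrary $B^n$ via the Segal condition from Proposition \ref{cech-der}.

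Since $A$ is of strict Tate-type, the augmentation ideal $\overline{A}=A'/\unit$ is concentrated in Adams degrees $\leq-1$, and by the bounded hypothesis each $\overline{A}(k)$, $k\leq-1$, is cohomologically bounded from below. In the bicomplex $\unit\leftarrow\overline{A}\leftarrow\overline{A}^{\otimes 2}\leftarrow\cdots$ of Lemma \ref{htp-b1-descr}, the term $\overline{A}^{\otimes n}(k)$ decomposes as a direct sum indexed by tuples $(k_1,\dots,k_n)$ with $k_i\leq-1$ and $\sum k_i=k$; this forces the $n$-th column to vanish for $n>-k$ and, when nonempty, to be a finite direct sum of bounded-below complexes. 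Passing to the total complex via Corollary \ref{htp-module-b1} yields $B^1(k)\simeq 0$ for $k>0$, the quasi-isomorphism $\integers\to B^1(0)$ for $k=0$, and $B^1(k)$ cohomologically bounded from below for $k<0$, so $B^1$ is itself of bounded Tate-type.

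For arbitrary $n$, I would apply the Segal equivalence $B^n\simeq(B^1)^{\otimes^\bL_{B^0}n}$, which via $B^0\simeq\unit$ becomes an $n$-fold tensor power over $\unit$. Taking Adams degree $k$ produces a sum over tuples $(k_1,\dots,k_n)$ with $\sum k_i=k$ of tensor products $B^1(k_1)\otimes^\bL\cdots\otimes^\bL B^1(k_n)$; the vanishing of $B^1(k_i)$ for $k_i>0$ restricts to $k_i\leq 0$, and the relation $B^1(0)\simeq\integers$ means positions with $k_i=0$ contribute only a binomial multiplicity, while the number of strictly negative coordinates is at most $-k$. Consequently, for fixed $k$ one sees only a finite, $n$-independent collection of tensor products $B^1(k_{i_1})\otimes^\bL\cdots\otimes^\bL B^1(k_{i_m})$ (each bounded below by the previous step), with multiplicities $\binom{n}{m}$ that scale but do not lower the cohomological bound. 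Hence each $B^n$ is of bounded Tate-type and the lower bound on $B^n(k)$ depends only on $k$, yielding the uniformity in $n$.

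Finally, the cobase change satisfies $B'^n\simeq B^n\otimes^\bL_A B\simeq\unit^{\otimes^\bL_A(n+2)}\simeq B^{n+1}$, so the same combinatorial analysis applies verbatim. The main obstacle I anticipate is the bookkeeping in the second step, namely verifying that increasing $n$ merely repeats a fixed finite collection of bounded-below summands rather than introducing ever-lower cohomological degrees; this is precisely the point at which the strictness of the Tate-type (placing $\overline{A}$ in Adams degrees $\leq-1$) is indispensable, since it is what guarantees the finiteness of the relevant index set in each fixed Adams degree.
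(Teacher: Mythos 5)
Your proof follows the same route as the paper: identify $B^1$ via the bicomplex of Lemma~\ref{htp-b1-descr} and Corollary~\ref{htp-module-b1}, observe that for fixed Adams degree $k<0$ only finitely many (at most $-k$) tensor factors of $\overline{A}$ in a bounded range of Adams degrees can contribute, bootstrap to $B^n$ via the Segal condition with the same Adams-degree bookkeeping, and dispatch $B'^\bullet$ by $B'^n\simeq B^{n+1}$. The one point you elide is that the factors $\overline{A}(k_i)$ are only \emph{cohomologically} bounded below, not bounded below as complexes, so to conclude that their (underived) tensor products remain cohomologically bounded below you need the K\"unneth spectral sequence together with the vanishing $\mathrm{Tor}^{\integers}_i(M,N)=0$ for $i>1$, which is exactly where the paper explicitly invokes the homological dimension of $\integers$; this is a one-line fix but worth stating, since it is also the step that breaks for coefficient rings of infinite homological dimension.
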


\begin{proof}
By Corollary \ref{htp-module-b1} and
Lemma \ref{htp-b1-descr} the underlying homotopy
type of $B^1$ is given as the total complex $C$
of a bicomplex of the form
$$\unit \leftarrow \overline{A} \leftarrow
\overline{A}^{\otimes 2} \leftarrow
\overline{A}^{\otimes 3} \leftarrow \cdots \text{,}$$
where $\overline{A}$ sits in strictly negative Adams
degree and is in each Adams degree cohomologically
bounded from below.
It follows that $C$ has $\integers$ in Adams degree $0$
and is $0$ in positive Adams degrees. Moreover
the contribution to $C$ in Adams degree $k<0$
comes from at most $-k$ tensor factors of some
$\overline{A}(l)$ with $l \ge k$.
Using that $\mathrm{Tor}^\integers_i(M,N)=0$ for $i>1$, $M,N$ abelian groups,
it follows that $C$, and hence also $B^1$, is in each
Adams degree cohomologically bounded from below.
This shows that $B^1$ is of Tate-type.
By the Segal condition $B^n$ is equivalent
to $(B^1)^{\otimes^\bL n}$. Again it follows that
$B^n$ is contractible in positive Adams degrees,
equivalent to $\integers$ in Adams degree $0$
and in each (negative) Adams degree cohomologically
bounded from below. Hence $B^n$ is of Tate-type.
For $k>0$ the contribution to $B^n(k)$
in $(B^1)^{\otimes^\bL n}$ only comes from
at most $-k$ tensor factors  of $B^1$
in Adams degrees $l$ with
$0 > l \ge k$, hence the $B^n(k)$ are uniformly in $n$
cohomologically bounded from below.

The statement for the $B'^\bullet$ follows from
the fact that $B'^n \simeq B^{n+1}$.
\end{proof}

\begin{lemma} \label{hgfsgh}
Let $M^\bullet$ be a cosimplicial object in $\Cpx(\Ab)$ and suppose there
is an $n_0 \in \integers$ such that for each $k \ge 0$ we have
$H^n(M^k)=0$ for $n < n_0$. Let $k_0 \ge 0$.
Let $C$ be the total complex corresponding to
the double complex which is associated
(via the unnormalized chain complex construction)
to the truncated cosimplicial object
$(M^k)_{k \le k_0}$. Then the natural map $\Tot(M^\bullet) \to C$
induces isomorphisms $H^n(\Tot(M^\bullet)) \to H^n(C)$ for
$n \le k_0 + n_0$.
\end{lemma}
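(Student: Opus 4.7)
The approach is to realize $C$ as a quotient of $\Tot(M^\bullet)$ and bound the cohomology of the resulting fibre using the uniform lower bound on rows. Inside $\UC M^\bullet$ the columns indexed by $k > k_0$ form a sub-bicomplex (the cosimplicial differential only raises $k$), and the quotient is by construction the bicomplex whose total complex is $C$. Taking $\Total^\Pi$ preserves this short exactness in each total degree, yielding a short exact sequence of cochain complexes
$$
0 \longrightarrow F \longrightarrow \Tot(M^\bullet) \longrightarrow C \longrightarrow 0,
$$
where $F := \Total^\Pi((\UC M^\bullet)_{k>k_0})$. By the associated long exact cohomology sequence, the lemma will follow from suitable vanishing of $H^\ast(F)$ in low degrees.

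To bound this cohomology I would first replace each $M^k$ by its good truncation $\tau_{\ge n_0} M^k$. Good truncation is functorial in chain maps, so it lifts to the cosimplicial level, and by the hypothesis $H^n(M^k)=0$ for $n < n_0$ the canonical comparison $\tau_{\ge n_0} M^k \to M^k$ is a degreewise quasi-isomorphism. The truncated bicomplex is supported in the quarter-plane $\{k \ge 0,\ p \ge n_0\}$, so in each total cohomological degree only finitely many bigraded pieces contribute; hence $\Total^\Pi$ agrees with $\Total^\oplus$ on both bicomplexes, and the column filtration spectral sequence converges strongly on each side. The $E_1$-comparison is an isomorphism, so the reduction does not alter $\Tot$ or $F$ up to quasi-isomorphism. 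After this reduction, $F^n$ is the finite direct sum of the bigraded pieces $M^k_p$ with $k > k_0$, $p \ge n_0$ and $k+p = n$, and this index set is empty as soon as $n \le k_0 + n_0$. Consequently $F^n = 0$ in this range, hence $H^n(F) = 0$, and plugging this into the long exact sequence yields the claimed isomorphism.

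The only mildly delicate step is the reduction to $\tau_{\ge n_0} M^\bullet$: $\Total^\Pi$ of a degreewise quasi-isomorphism of unbounded bicomplexes need not be a quasi-isomorphism in general, and here it is precisely the uniform lower bound $n_0$ provided by the hypothesis that places the bicomplex in a quarter-plane and guarantees strong convergence of the column spectral sequence. Once this is in hand, the rest of the argument is a routine bookkeeping of bidegrees.
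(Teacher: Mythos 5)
Your reformulation via the short exact sequence $0 \to F \to \Tot(M^\bullet) \to C \to 0$ of cochain complexes (using that the columns $k > k_0$ form a sub-bicomplex because the cosimplicial differential raises $k$, and that $\Total^\Pi$ is exact degreewise) is a legitimate and somewhat more elementary alternative to the paper's argument, which instead compares the column-filtration spectral sequences of $\Tot(N^\bullet)$ and of $C$ directly. Both routes rest on the same good-truncation reduction; your SES version makes the degree bookkeeping concrete where the paper's ``the spectral sequences coincide for total degree $\le k_0 + n_0$'' is rather terse.

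The one genuine gap is in your justification of that reduction. You claim that the uniform lower bound $n_0$ ``places the bicomplex in a quarter-plane and guarantees strong convergence of the column spectral sequence,'' but the hypothesis $H^n(M^k)=0$ for $n<n_0$ bounds only the \emph{cohomology} of the $M^k$, not the complexes themselves. The $M^k$ may be unbounded below, so $\UC M^\bullet$ need not sit in a quarter-plane, and the column spectral sequence for $\Tot(M^\bullet)$ need not converge at all; only the truncated bicomplex $\UC(\tau_{\ge n_0}M^\bullet)$ is quarter-plane. The $E_1$-comparison you invoke therefore does not establish that the levelwise quasi-isomorphism $M^\bullet \to \tau_{\ge n_0}M^\bullet$ becomes a quasi-isomorphism after $\Tot$. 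What one actually needs, and what the paper implicitly uses (see the lemma in its appendix), is the general fact that $\Tot = \Total^\Pi\circ\UC$ sends levelwise quasi-isomorphisms of cosimplicial cochain complexes to quasi-isomorphisms, with no boundedness hypothesis -- this holds because $\Tot$ computes the homotopy limit of the cosimplicial diagram. Replacing your spectral-sequence justification of the truncation step by this fact, the rest of your SES argument goes through.

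One further bookkeeping point, shared by the paper: from $F^n=0$ for $n\le k_0+n_0$, the long exact sequence gives the isomorphism $H^n(\Tot(M^\bullet))\to H^n(C)$ only for $n\le k_0+n_0-1$, since surjectivity in degree $n$ also needs $H^{n+1}(F)=0$, and $H^{k_0+n_0+1}(F)$ is in general nonzero (its computation involves $H^{n_0}(M^{k_0+1})$, which the hypothesis does not control). The boundary degree $n=k_0+n_0$ gives only injectivity. This off-by-one is already latent in the statement as given and is harmless in the application (the proof of lemma (\ref{main-tot}) only uses that the bound grows without limit as $k_0\to\infty$), but your concluding ``plugging this into the long exact sequence yields the claimed isomorphism'' passes over it.
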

\begin{proof}
By applying the good truncation functor there is a cosimplicial complex
$N^\bullet$ such that the complexes $N^k$ are uniformly in $k$
bounded from below (for the cohomological indexing) and a quasi isomorphism
$M^\bullet \to N^\bullet$. Thus the map $\Tot(M^\bullet) \to \Tot(N^\bullet)$
is a quasi isomorphism. Consider the filtration on $\Tot(N^\bullet)$ induced
by the subcomplexes coming from a cosimplicial degree bounded from below
by a fixed degree. Then the associated spectral sequence strongly converges
to the cohomology of $\Tot(N^\bullet)$ by the boundedness condition.

The strongly convergent spectral sequence for $C$ coincides
with this spectral sequence for fixed total degree $\le k_0 + n_0$
by the assumption, thus the result.
\end{proof}

We note that the notion of a (bounded) algebra of Tate type
also makes sense in the category of Adams graded graded abelian groups.
For example the cohomology of a (bounded) algebra of Tate type will be such a (bounded)
algebra of Tate type.

\begin{lemma} \label{juhtgd}
a) Let $A$ be a commutative algebra in Adams graded graded abelian groups which is
of Tate type. Let $M$ be an $A$-module which is Adams bounded from above
(i.e. there is an $N$ such that $M(k)=0$ for $k > N$). Let $s \in \naturals_{> 0}$.
Suppose there is $B \in \naturals$ such that $A(k)^i=0$ for $k \ge -s$ and
$i < -B$ and suppose there is $B' \in \integers$ such that $M(k)^i=0$
for $N \ge k \ge N-s$ and $i< B'$. Then there is an $A$-free resolution
$$\cdots \to P_i \to P_{i-1} \to \cdots \to P_1 \to P_0 \to M$$
with the following properties:
\begin{enumerate}
\item if $i \ge 2s$ we have $P_i(N)= \cdots =P_i(N-s)=0$,
\item for $2s > i \ge 0$ we have $P_i(k)^l=0$ if $N \ge k \ge N-s$ and $l < B' -2sB$,
\item for $2s > i \ge 0$ the generators of $P_i$ which are in Adams degrees $N-s$ up to $N$
lie all in degree $\ge B'- (2s-1)B$.
\end{enumerate}
b) Let the notation be as in a). Let $M_2$ be Adams bounded from above by $N_2$ and suppose
there is $B'' \in \integers$ such that $M_2(k)^i=0$
for $N \ge k \ge N-s$ and $i< B''$. Then $(M_2 \otimes_A P_i)(k)^l=0$ 
if $N+N_2 \ge k \ge N+N_2-s$ and $l < B'+B'' -(2s-1)B$.

\end{lemma}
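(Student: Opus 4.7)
The plan is to build the resolution in (a) by induction on $i$, setting $K_0 := M$, $K_{i+1} := \ker(P_i \to K_i)$, and taking $P_i := A \otimes_\integers F_i$ where $F_i$ is a free bigraded abelian group that surjects onto $\bar K_i := K_i / (A_{<0} K_i)$ in each Adams degree (with $A_{<0} := \bigoplus_{k<0} A(k)$). In the critical Adams range $[N-s, N]$ I would pick the generators of $F_i$ with the smallest possible cohomological degree; outside that range, any surjection onto $\bar K_i$ suffices. All generators can be kept in Adams degrees $\le N$ since $M$ is Adams-bounded above by $N$. The central invariant to be maintained inductively is that in the critical range, $K_i(k)$ is concentrated in cohomological degrees $\ge B' - iB$.

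To verify this invariant, I note that because $A(j) = 0$ for $j > 0$, only generators $g$ of $P_i$ with $k_g \in [k, N] \subseteq [N-s, N]$ contribute to $P_i(k)$ for $k$ in the critical range, and the $A(k - k_g)$ factors then lie in cohomological degree $\ge -B$ by the hypothesis on $A$ (since $|k - k_g| \le s$). Hence if the generators lie in cohomological degree $\ge B' - iB$, then $P_i(k)^l = 0$ for $l < B' - (i+1)B$, and the same bound passes to $K_{i+1} \subseteq P_i$. This immediately yields condition (2) via $B' - (i+1)B \ge B' - 2sB$ for $i < 2s$, and condition (3) via $B' - iB \ge B' - (2s-1)B$ for $i \le 2s - 1$.

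The main obstacle is condition (1), which requires arranging $K_{2s}$, and hence $P_i$ for $i \ge 2s$, to vanish in $[N-s, N]$. I would address this by descending induction on Adams degree. In Adams degree $N$, only generators with $k_g = N$ contribute to any $P_i(N)$, so $F_\bullet(N)$ is literally a free abelian resolution of $M(N)$; since abelian groups have projective dimension at most one, one can arrange $K_i(N) = 0$ for $i \ge 2$. Once $K_i(N) = 0$ from some step on, the kernel $K_i(N-1)$ behaves like an extension built from free-abelian syzygies and resolves in a bounded number of further steps. Iterating down through the critical range and carefully interleaving the resolutions at the various Adams levels should yield $K_{2s}$ vanishing on all of $[N-s, N]$; this bookkeeping is the technical heart of the argument.

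For part (b), the computation is direct. Using $P_i = A \otimes_\integers F_i$ one has
\[
(M_2 \otimes_A P_i)(k)^l = \bigoplus_g M_2(k - k_g)^{l - l_g},
\]
the sum running over generators $g \in F_i$ of bidegree $(k_g, l_g)$. For $k \in [N+N_2-s, N+N_2]$, generators with $k_g > N$ do not occur by construction, and those with $k_g < N-s$ give $M_2$-Adams-index $> N_2$ (outside the support of $M_2$), so the sum restricts to $k_g \in [N-s, N]$. For such $g$, the $M_2$-factor has Adams index in $[N_2-s, N_2]$ and hence cohomological degree $\ge B''$, while by (a) condition (3), $l_g \ge B' - (2s-1)B$. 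Summing, $l \ge B' + B'' - (2s-1)B$, as required.
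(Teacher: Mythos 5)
Your overall strategy is the same as the paper's: build a bigraded free $A$-resolution step by step, track Adams and cohomological degrees of the generators, and exploit the key fact that because $A(k)=0$ for $k>0$ and $A(0)=\integers$, the top surviving Adams degree of each free module $P_i$ receives contributions only from $A(0)\otimes(\text{generators})$, hence is a free abelian group; a subgroup of it is free abelian and dies after one more step. Conditions (2), (3), and part (b) are verified the same way in both arguments.

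The place to flag is condition (1), which you leave as ``bookkeeping.'' The precise inductive content is: once $F_{i-1}$ has no generators in Adams degrees $\ge N-j+1$, we have $P_{i-1}(N-j)=A(0)\otimes F_{i-1}(N-j)$, a free abelian group, so $K_i(N-j)\subseteq P_{i-1}(N-j)$ is free abelian and is killed by one further step. The potential torsion in $A(k)$ for $k<0$ is exactly what makes ``behaves like an extension built from free-abelian syzygies'' insufficient as stated; you must first arrange that all higher Adams generators have been eliminated before the freeness argument applies at Adams degree $N-j$. The paper's proof makes this explicit by an alternating ($2$-periodic) construction: at even steps it takes $P_{2j}=F(K_{2j-1})$, the free module on \emph{all} elements of the kernel (this forces the top Adams degree of $P_{2j}$ to be free abelian with a surjection onto $K_{2j}$ there), and at odd steps it uses the economical cover $Q\oplus Q'$ with $Q$ hitting only that top Adams degree isomorphically, killing it in the next kernel. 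Your ``minimal generators of $\bar K_i := K_i/(A_{<0}K_i)$'' choice accomplishes the same thing (once $K_i(j)=0$ for $j\ge N-m+1$, automatically $\bar K_i(N-m)=K_i(N-m)$ and $F_i(N-m)$ can be taken to vanish when that group does), so your route is a valid variant — but the inner induction on Adams level must be written out rather than gestured at. Incidentally, carried out carefully, either construction gives $P_i=0$ on $[N-s,N]$ for $i\ge 2s+2$ rather than $i\ge 2s$; this off-by-a-constant is harmless for the lemma's later use in the proof of Lemma \ref{main-tot}.
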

\begin{proof}
a) For an $A$-module $M'$ we let $F(M')$ be the free $A$-module on the
elements of $A$, i.e. for each $i,j \in \integers$
and $x \in M'(i)^j$ there is a copy $\Sigma^{i,j}A$ in $F(M')$.
Inductively we construct $P_i$ as follows: We set $P_0:=F(M)$.
Note that the graded groups $P_0(N)^i, \ldots, P_0(N-s)^i$ are
zero for $i< B' -B$.
 
Let $K_0 =\ker(F(M) \to M)$. Then $K_0(N)$ is a graded abelian group consisting
of free abelian groups (recall $A$ is an Adams algebra and in particular $A(0)$
is the graded abelian group $\integers$ sitting in degree $0$).
We let $Q=\bigoplus_{i \in \integers} \Sigma^{i,N} A \otimes K_0(N)^i$.
There is a canonical map $Q \to K_0$ inducing an isomorphism in Adams degree $N$.
Let $K_0'$ be the same module as $K_0$ except that the $N$-th
Adams degree is set to $0$. Let $Q'= F(K_0')$. Then there is a canonical map
$Q' \to K_0$. The induced map $Q \oplus Q' \to K_0$ is a surjection and we let
$P=Q \oplus Q'$. Thus $P_1 \to P_0 \to M$ is exact.

We construct $P_2$ as $F(\ker(P_1 \to P_0))$. We construct $P_3$ as we constructed
$P_1$ using the fact that $\ker(P_2 \to P_1)(N-1)$ is a graded free abelian group.
In this way we get a resolution $P_\bullet \to M$ such that the $P_i$ are
constructed in a $2$-periodic way.
The vanishing and boundedness results claimed are easy consequences of the construction
of this resolution.

b) follows from item a)(3).
\end{proof}

\begin{lemma} \label{main-tot}
Let $A \to C$ be a map of bounded Tate algebras in $\Cpx(\Ab)^\integers$
and let $M^\bullet$ be a cosimplicial $A$-module.
Let $L^\bullet := M^\bullet \otimes^\bL_A C$.
Suppose that there exists $N \in \integers$ such that
$M^i(k) \simeq 0$ for all $i \ge 0$ and $k > N$
and
that for any $k \le N$ there exists $n_0 \in \integers$
such that $H^n(M^l(k))=0$ for all $n < n_0$ and all $l \ge 0$.
Then $L^\bullet$ satisfies the same boundedness conditions as $M^\bullet$
(with possibly other constants) and
the natural map $\Tot_A(M^\bullet) \otimes_A^\bL C
\to \Tot_C(L^\bullet)$ is
an equivalence.
\end{lemma}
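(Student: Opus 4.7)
\emph{Proof proposal.} The argument splits naturally into two parts: verifying that $L^\bullet$ inherits the stated boundedness conditions, and then establishing the total-complex equivalence. Both rely on reducing to finite data in each Adams degree where the earlier lemmas give sharp uniform control.

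For the first part, I would apply Lemma \ref{juhtgd} with $M := M^l$ and $M_2 := C$, for each $l \ge 0$. The bounded Tate hypothesis on $A$ supplies a uniform $B$ valid in the Adams range $[-s,\infty)$; the uniform (in $l$) cohomological lower bound on $M^l(k)$, minimized over the finitely many $k \in [N-s,N]$, supplies a uniform $B'$; bounded Tate-ness of $C$ supplies $B''$. Part (a) of the lemma then yields free $A$-resolutions $P_\bullet^l \to M^l$ whose $P_i^l$ vanish in Adams degrees $[N-s,N]$ for $i \ge 2s$, and part (b) gives the uniform (in $l$) cohomological lower bound on the remaining pieces $C \otimes_A P_i^l$. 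Hence, in the Adams range $[N-s,N]$, the derived tensor $L^l \simeq C \otimes_A^\bL M^l$ is computed by a finite complex of terms with uniform cohomological lower bounds, which gives the desired uniform lower bound. Letting $s$ vary covers every $k \le N$; the Adams upper bound $L^l(k) \simeq 0$ for $k > N$ is immediate from the Tate upper bounds on $M^l$ and $C$.

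For the second part, I fix $k$ and $n$ and show the map is an $H^n$-isomorphism in Adams degree $k$. Let $M_{k_0}^\bullet := \tau_{\le k_0} M^\bullet$ and $L_{k_0}^\bullet := \tau_{\le k_0} L^\bullet = M_{k_0}^\bullet \otimes_A^\bL C$, and consider the commutative square obtained by tensoring the base-change and truncation maps. By Lemma \ref{hgfsgh}, for $k_0$ sufficiently large the right vertical map $\Tot_C(L^\bullet)(k) \to \Tot_C(L_{k_0}^\bullet)(k)$ is an $H^n$-iso (using the uniform bound from part one). The bottom horizontal map is an outright equivalence: $\Tot_A(M_{k_0}^\bullet)$ is an iterated finite (co)fiber of the $M^i$ with $i \le k_0$, and $\_ \otimes_A^\bL C$ is an exact functor that commutes with finite (co)fibers. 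By $2$-of-$3$ it remains to show that the left vertical map $\Tot_A(M^\bullet) \otimes_A^\bL C \to \Tot_A(M_{k_0}^\bullet) \otimes_A^\bL C$ is an $H^n$-iso in Adams degree $k$.

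This last step is the principal technical obstacle. Its cone $X$ is an $A$-module which is Adams bounded above by $N$ and, by Lemma \ref{hgfsgh} applied in each Adams degree $k'$, cohomologically concentrated in degrees $> k_0 + n_0(k')$. Re-running the resolution argument of part one with $X$ in place of $M^l$ shows that $(X \otimes_A^\bL C)(k)$ is cohomologically bounded below by a quantity that grows linearly in $\min_{k' \in [k,N]} (k_0 + n_0(k'))$; in particular, only finitely many input Adams degrees $k'$ contribute because $C$ is of Tate type, so the bound is controlled. Choosing $k_0$ large enough to push this bound above $n$ forces $(X \otimes_A^\bL C)(k)$ to vanish in cohomological degrees $\le n$ and closes the argument.
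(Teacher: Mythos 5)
Your proposal is essentially correct and follows the same architecture as the paper's proof: truncate in the cosimplicial direction, use Lemma~\ref{hgfsgh} to show the fiber (or cone) of the truncation map is cohomologically concentrated in degrees $> k_0 + n_0$ in each Adams window, and then use the free resolution of Lemma~\ref{juhtgd} together with the K\"unneth spectral sequence to propagate this concentration through $\_\otimes_A^\bL C$, letting $k_0 \to \infty$ at the end. The one substantive point of difference is the Tor-symmetric choice of which module to resolve: the paper applies Lemma~\ref{juhtgd}(a) to build a single resolution $P_\bullet$ of $C_{**}$ (independent of $k_0$) and then uses part (b) with $M_2 = F_{**}$, the homology of the fiber, so that only the constant $B''$ carries the $k_0$-dependence; you instead resolve the cone $X_{**}$ (which changes with $k_0$) and tensor with $C_{**}$. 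Both work — $\mathrm{Tor}$ is symmetric and either route makes the bound grow linearly in $k_0$ — but the paper's choice is slightly cleaner since the resolution is built once and for all.

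Two small imprecisions worth fixing. First, Lemma~\ref{juhtgd} lives in Adams graded graded abelian groups: it resolves $M_{**}$ (the homology), not the complex $M$, and the bridge to the derived tensor $M \otimes_A^\bL C$ is the strongly convergent K\"unneth spectral sequence $\mathrm{Tor}^{A_{**}}_p(M_{**},C_{**}) \Rightarrow (M\otimes_A^\bL C)_{**}$; you should name this spectral sequence explicitly in both parts rather than saying the derived tensor ``is computed by a finite complex of terms.'' The finiteness you invoke is really Lemma~\ref{juhtgd}(a)(1) forcing $\mathrm{Tor}_p$ to vanish for $p \ge 2s$ in the relevant Adams window, which collapses the spectral sequence to a finite range. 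Second, when you write ``its cone $X$'' for the left vertical map and then apply Lemma~\ref{hgfsgh} to $X$, you have conflated the cone of the tensored map with the cone of the untensored map $\Tot_A(M^\bullet) \to \Tot_A(M^{\le k_0})$; Lemma~\ref{hgfsgh} applies to the latter, and the cone of the tensored map is then $X \otimes_A^\bL C$ by exactness of $\_\otimes_A^\bL C$. This is clearly what you meant, but as written the two objects are not the same and the reader has to untangle it. Neither issue affects the validity of the argument.
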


\begin{proof}
We first note that it makes sense to talk about the total object of a truncated
cosimplicial $A$-module, e.g. we can consider $\Tot(M^{\le k})$. The same applies
to $L^\bullet$. Clearly we have that the map
$$\Tot_A(M^{\le k_0}) \otimes_A^\bL C \to \Tot_C(L^{\le k_0})$$
is an equivalence (because there are only finitely many contributions for each total degree).
We let $F$ denote the homotopy fiber of the map
$\Tot_A(M^\bullet) \to \Tot_A(M^{\le k_0})$.
Fix $s \in \naturals_{> 0}$. By assumption there exists
$n_0 \in \integers$ such that $H^n(M^l(k))=0$ for $N \ge k \ge N-s$, $l \ge 0$ and $n<n_0$.
By lemma (\ref{hgfsgh}) it follows that the map $\Tot_A(M^\bullet) \to \Tot_A(M^{\le k_0})$
induces an isomorphism in Adams degrees $N-s$ up to $N$ and cohomological degree
$\le k_0 + n_0$. Thus in Adams degree $N-s$ up to $N$ the cohomology of the fiber $F$ vanishes
in degrees $\le k_0 + n_0$. Set $B'':=k_0+n_0+1$.

We now invoke the strongly convergent K\"unneth spectral sequence for
$F \otimes_A^\bL C$:
$$E^2_{p,q,r}= \mathrm{Tor}^{A_{**}}_p(F_{**},C_{**})_{(q,r)}
\Longrightarrow (F\otimes_A^\bL C)_{(p+q,r)}.$$
Here the $**$-notation denotes homology.
To compute the Tor-term we use the free resolution $P_\bullet \to C_{**}$ provided
by lemma (\ref{juhtgd}). It is then given by the homology of the complex
$F_{**} \otimes_{A_{**}} P_\bullet$. The group $(F_{**} \otimes_{A_{**}} P_i)(k)^j$ contributes
to $$H^{j-i}((F\otimes_A^\bL C)(k)).$$ We will be interested in these contributions
only in Adams degrees from $N-s$ up to $N$. Let
$B \in \naturals$ such that $H^i(A(k))=0$ for $0 \ge k \ge -s$ and $i < -B$ and
let $B' \in \integers$ such that $H^i(C(k))=0$ for $0 \ge k \ge -s$ and $i < B'$.
Then lemma (\ref{juhtgd})(b) tells us that $(F_{**} \otimes_{A_{**}} P_i)(k)^j=0$,
$N \ge k \ge N-s$, if either $i \ge 2s$ or $j < B' + B'' -(2s-1)B$. These
considerations give us that $H^{i}((F\otimes_A^\bL C)(k))=0$ for
$N \ge k \ge N-s$ and $i < B' + B'' -(2s-1)B -2s$.

Thus the map $$\Tot_A(M^\bullet) \otimes_A^\bL C \to \Tot_A(M^{\le k_0}) \otimes_A^\bL C$$
induces an isomorphism in Adams degree $N-s$ up to $N$ in cohomological degree
\begin{equation}
\label{hngfd}
< B' + k_0 + n_0 -(2s-1)B -2s.
\end{equation}
For fixed $s$ we can vary $k_0$ to increase
this upper bound.

We now check the boundedness conditions for $L^\bullet$. Clearly it is also
Adams bounded from above by $N$. The cohomological bounds for $L^\bullet$
follow by an analogous spectral sequence argument as above.

We now consider the commutative square
$$\xymatrix{\Tot_A(M^\bullet) \otimes_A^\bL C \ar[r] \ar[d] & \Tot_C(L^\bullet) \ar[d] \\
\Tot_A(M^{\le k_0}) \otimes_A^\bL C \ar[r] & \Tot_C(L^{\le k_0}).}$$

By what we have shown it follows that for fixed $s$ all maps induce
isomorphisms on cohomology in Adams degree $N-s$ up to $N$ provided
the cohomological degree is small. Increasing $k_0$ the explicit upper bound
(\ref{hngfd}) shows that the upper
horizontal map is a quasi isomorphism in these Adams degrees. But $s$ was picked arbitrarily
(the bounds $n_0$, $B$ and $B'$ depend upon $s$), thus the result.  
\end{proof}

\begin{lemma} \label{coh-connected}
Let $M^\bullet$ be a cosimplicial object in $\Cpx(\Ab)$. Suppose there is an $n_0 \in \integers$
such that $H^n(M^i)=0$ for every $n \le n_0$ and $i \ge 0$. Then $\Tot(M^\bullet)$ is
cohomologically bounded from below.
\end{lemma}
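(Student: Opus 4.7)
The plan is to reduce the statement immediately to the previous Lemma (\ref{hgfsgh}), which was set up to compute the low-degree cohomology of $\Tot(M^\bullet)$ in exactly the situation of uniform cohomological connectivity.

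First I would normalize the hypothesis: writing $H^n(M^i)=0$ for $n \le n_0$ is the same as writing $H^n(M^i)=0$ for $n < n_0+1$, which is precisely the form required by Lemma (\ref{hgfsgh}) with its connectivity parameter taken to be $n_0+1$. So the hypothesis of (\ref{hgfsgh}) is available.

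Next I would apply Lemma (\ref{hgfsgh}) with $k_0=0$. The truncated cosimplicial object $(M^k)_{k \le 0}$ reduces to the single object $M^0$, so the associated double complex has only one nonzero column and its total complex $C$ is canonically isomorphic to $M^0$. The conclusion of (\ref{hgfsgh}) is that the natural map $\Tot(M^\bullet) \to C \cong M^0$ induces isomorphisms on cohomology in degrees $\le k_0 + (n_0+1) = n_0+1$. Combined with the assumption $H^n(M^0)=0$ for $n \le n_0$, this gives $H^n(\Tot(M^\bullet)) = 0$ for all $n \le n_0$, which is exactly the desired lower boundedness.

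No real obstacle arises: the substantive work (good truncation of each $M^k$ to get a uniform cosimplicial bound, followed by a strongly convergent spectral sequence argument) is already packaged inside Lemma (\ref{hgfsgh}). The only thing one must be careful about is the off-by-one between the strict inequality in (\ref{hgfsgh}) and the non-strict inequality in the present lemma, which the reformulation above handles.
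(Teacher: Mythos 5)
Your argument is correct, and it is a legitimate (in fact cleaner) alternative to the paper's own proof. The paper proves Lemma~(\ref{coh-connected}) from scratch by levelwise good truncation at $n_0$: it replaces $M^\bullet$ by a quasi-isomorphic $N^\bullet$ with each $N^i$ concentrated in degrees $\ge n_0$, observes that $\Tot(N^\bullet)$ is then visibly bounded below, and uses that $\Tot$ sends levelwise quasi-isomorphisms to quasi-isomorphisms. Your route instead recycles Lemma~(\ref{hgfsgh}), whose proof already encapsulates exactly that truncation plus spectral-sequence work. Concretely, setting the connectivity parameter of~(\ref{hgfsgh}) to $n_0+1$ (to absorb the strict/non-strict inequality mismatch, as you note) and taking $k_0=0$ gives $C=M^0$, so $H^n(\Tot(M^\bullet))\cong H^n(M^0)=0$ for $n\le n_0$, which is the desired lower bound. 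What your version buys is the reuse of an already-established reduction, avoiding a second verbatim truncation argument; what the paper's version buys is independence from~(\ref{hgfsgh}), which in principle makes~(\ref{coh-connected}) citable on its own without tracking the precise degree range in~(\ref{hgfsgh}). Both are sound and rest on the same underlying mechanism.
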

\begin{proof}
Apply the good truncation to $M^\bullet$ at the place $n_0$: set $N^{i,n}=0$
for $n < n_0$ $N^{i,n_0}=M^{i,n_0}/\ker(d)$ ($d$ the differential $M^{i,n_0} \to M^{i,n_0+1}$)
and $N^{i,n}=M^{i,n}$ for $n>n_0$. Then there is a quasi isomorphism $M^\bullet \to N^\bullet$.
Clearly $\Tot(N^\bullet)$ is cohomologically bounded from below. But
$\Tot(M^\bullet) \to \Tot(N^\bullet)$ is a quasi isomorphism.
\end{proof}

In the following we suppose $A$ is of bounded strict Tate type. We use the notations
from the previous statements, i.e. we let
$e \colon A \to \unit$ be the corresponding augmentation
and $A \to B \to \unit$ a factorization of $e$
into a cofibration followed by a weak equivalence.

We now introduce subcategories of $\D(A)$ and $\D(B^\bullet)$
which will restrict to an equivalence.
Let $\C \subset \D(A)_{\mathrm{Aba}}$
be the full triangulated subcategory of modules which
are cohomologically bounded from below in each
Adams degree. Let $\D \subset \D(B^\bullet)_\cart$
be the full triangulated subcategory of modules $M^\bullet$,
such that $M^0$ is Adams bounded from above and
cohomologically bounded from below in each Adams degree.

\begin{lemma} \label{D-uniformly}
Let $M^\bullet \in \D$. Then in each Adams degree $M^n$ is uniformly in $n$ cohomologically bounded
from below.
\end{lemma}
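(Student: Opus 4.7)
The starting point is the cartesian property: for each $n \ge 0$ the structure map induced by $[0] \to [n]$, $0 \mapsto 0$, gives an equivalence $M^n \simeq M^0 \otimes^\bL_B B^n$, where $B = B^0$. Thus the entire question reduces to analysing these derived tensor products. By hypothesis $M^0$ is Adams bounded above by some $N \in \integers$ and cohomologically bounded below in each Adams degree; by corollary (\ref{uniform-bound}) the algebras $B^n$ are of Tate type with uniform-in-$n$ lower cohomological bounds in each Adams degree. The plan is to propagate this uniformity across the derived tensor product.

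Fix $s \in \naturals_{>0}$; it suffices to show that $H^*(M^n(k))$ is uniformly bounded below in $n$ for $N \ge k \ge N-s$, since $s$ is arbitrary and $M^n(k) \simeq 0$ for $k > N$ automatically. The tool is the strongly convergent K\"unneth spectral sequence
$$E^2_{p,q,r} = \mathrm{Tor}^{B_{**}}_p(M^0_{**}, B^n_{**})_{(q,r)} \Longrightarrow (M^0 \otimes^\bL_B B^n)_{(p+q,r)},$$
exactly as used in the proof of lemma (\ref{main-tot}). To control the $E^2$ terms I apply lemma (\ref{juhtgd})(a) with ``$A$'' taken to be the Adams graded graded ring $B_{**}$ and ``$M$'' taken to be $B^n_{**}$, which is Adams bounded above by $0$. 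Corollary (\ref{uniform-bound}) supplies the numerical input ``$B'$'' of lemma (\ref{juhtgd}) independently of $n$, while the input ``$B$'' depends only on the fixed algebra $B_{**}$. One thereby obtains $B_{**}$-free resolutions $P_\bullet^{(n)} \to B^n_{**}$ whose generators in Adams degrees $-s$ up to $0$ have uniform-in-$n$ cohomological lower bounds.

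Applying lemma (\ref{juhtgd})(b) with ``$M_2$'' equal to $M^0_{**}$ then produces a uniform-in-$n$ cohomological lower bound on $(M^0_{**} \otimes_{B_{**}} P_i^{(n)})(k)$ for $N \ge k \ge N-s$ and $i < 2s$. By property (1) of lemma (\ref{juhtgd})(a) the terms $P_i^{(n)}$ vanish in these Adams degrees for $i \ge 2s$, so only finitely many columns contribute to the Tor groups, and the whole $E^2$-page is uniformly cohomologically bounded below in $n$ in the prescribed Adams range. Strong convergence of the K\"unneth spectral sequence transfers this bound to $H^*(M^n(k))$, completing the argument.

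The main technical obstacle is precisely the uniformity in $n$ of the bounds coming out of lemma (\ref{juhtgd}); everything hinges on corollary (\ref{uniform-bound}), which ensures that the cohomological lower bounds for $B^n_{**}$ in the finite Adams range $[-s,0]$ are independent of $n$. Once that uniformity is in hand, the spectral sequence argument goes through without further subtlety.
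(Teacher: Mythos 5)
Your argument is correct but substantially more elaborate than necessary, because it does not exploit the crucial simplification that $B^0 = B \simeq \unit$, so that $B_{**}$ is just $\integers$ concentrated in bidegree $(0,0)$. Once this is observed, $M^n \simeq M^0 \otimes^\bL_{B^0} B^n$ is an \emph{absolute} derived tensor product over $\integers$, whose Tor-dimension is $1$, and the claim follows by a direct finite count in Adams degrees, which is what the paper does: if $M^0$ is trivial in Adams degrees $>k_0$, then for fixed $k\ge 0$ the contributions to $M^n(k_0-k)$ come only from $M^0(k_0),\ldots,M^0(k_0-k)$ (finitely many pieces, each cohomologically bounded below) tensored against $B^n(0),\ldots,B^n(-k)$ (finitely many pieces, each cohomologically bounded below \emph{uniformly in $n$} by corollary~(\ref{uniform-bound})); since $\mathrm{Tor}^\integers_i=0$ for $i\ge 2$, tensoring can only lower the cohomological bound by a constant, and the uniform bound follows. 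Your deployment of the K\"unneth spectral sequence and lemma~(\ref{juhtgd}) is not wrong---with $B_{**}\cong\integers$ the $E^2$-page is concentrated in $p\le 1$, and the constant ``$B$'' of lemma~(\ref{juhtgd}) is $0$---but it imports machinery designed for the genuinely nontrivial ground algebra $A_{**}$ appearing in lemma~(\ref{main-tot}) into a situation where the ground ring is $\integers$ and a three-line counting argument suffices. Both routes hinge, as you correctly identify, on corollary~(\ref{uniform-bound}); the paper's route simply avoids the resolution and spectral sequence overhead by recognizing that descent to the augmentation has already reduced the base to the unit.
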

\begin{proof}
Since $M^\bullet$ is cartesian we have $M^n \simeq M^0 \otimes^\bL B^n$.
By corollary (\ref{uniform-bound}) the property in question is fulfilled for
$B^\bullet$. Suppose $M$ is trivial in Adams degree $> k_0$.
Fix $k \ge 0$. Then the contribution to Adams degree $k_0 -k$ in $M^n$
is only from $M^0(k_0),\ldots,M^0(k_0-k)$ and $(B^n)(0),\ldots,(B^n)(-k)$.
The claim follows.
\end{proof}

\begin{lemma} \label{C-D}
The canonical functor $\D(A) \to \D(B^\bullet)$ sends $\C$ to $\D$.
\end{lemma}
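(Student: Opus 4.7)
Let $M \in \C$. Then the image of $M$ under the base-change functor $\D(A) \to \D(B^\bullet)$ is the cosimplicial module $n \mapsto M \otimes_A^\bL B^n$, which is automatically cartesian. Since $B \simeq \unit$ as $A$-modules, the zeroth level is $M \otimes_A^\bL B \simeq M \otimes_A^\bL \unit$, and it suffices to show that this object is Adams bounded from above and cohomologically bounded from below in each Adams degree. I plan to obtain both conclusions via the K\"unneth spectral sequence
$$E^2_{p,q,r} = \mathrm{Tor}_p^{H^*(A)}(H^*(M),\unit)_{(q,r)} \Longrightarrow H^{p+q}((M \otimes_A^\bL \unit)(r))$$
computed from a suitable free resolution $P_\bullet \to \unit$ over $H^*(A)$.

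Let $N$ be the Adams upper bound of $M$. For Adams boundedness, use that $H^*(A)$ is supported in Adams degrees $\le 0$ with $H^*(A)(0) = \integers$, so $P_\bullet$ may be chosen with all generators in Adams degrees $\le 0$. Then $(H^*(M) \otimes_{H^*(A)} P_i)(r)$ vanishes once $r > N$, hence so does $(M \otimes_A^\bL \unit)(r)$. For the cohomological lower bound at a fixed Adams degree $r \le N$, set $s := N - r$ and apply lemma (\ref{juhtgd}). Part (a), applied with ambient algebra $H^*(A)$ and resolving module $\unit$ (Adams upper bound $0$, cohomological bound $B' = 0$, and $B$ a uniform cohomological lower bound for $H^*(A)$ in Adams degrees $-s,\ldots,0$ furnished by the bounded Tate type hypothesis on $A$), produces a resolution $P_\bullet$ satisfying $P_i(0) = \cdots = P_i(-s) = 0$ whenever $i \ge 2s$. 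Part (b), applied with $M_2 := H^*(M)$ and $B''$ a lower bound for $H^*(M)$ in the finitely many Adams degrees $N-s,\ldots,N$, then yields $(H^*(M) \otimes_{H^*(A)} P_i)(r)^l = 0$ for $l < B'' - (2s-1)B$. Only the finitely many columns $p < 2s$ contribute to Adams degree $r$, each cohomologically bounded below uniformly in $p$, so the spectral sequence converges strongly and gives the desired lower bound on $H^*((M \otimes_A^\bL \unit)(r))$.

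The main obstacle is the bookkeeping needed to extract from lemma (\ref{juhtgd}) tor bounds that are uniform in the tor degree $p$ for each fixed Adams degree $r$; once this is arranged the spectral sequence argument runs essentially in parallel to the one in the proof of lemma (\ref{main-tot}), and the claim falls out.
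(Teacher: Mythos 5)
Your argument is correct, but you take a genuinely different route from the paper. The paper's proof is more hands-on: it writes $M^0 = e(M) = A\otimes^{\bL}_{A\otimes^{\bL}A}(M\otimes\unit)$ as the realization of the simplicial module $[n]\mapsto M\otimes^{\bL}A^{\otimes^{\bL}n}\otimes^{\bL}\unit$ and, exactly as in lemma (\ref{htp-b1-descr}), identifies this realization with the total complex of a Bar-type bicomplex
$$M \leftarrow M\otimes\overline{A} \leftarrow M\otimes\overline{A}^{\otimes 2} \leftarrow \cdots,$$
where $\overline{A}$ is concentrated in strictly negative Adams degrees. A direct count then shows that a fixed Adams degree $k_0-k$ only receives contributions from $M\otimes\overline{A}^{\otimes l}$ with $l\le k$, and within those only from $\overline{A}$ in Adams degrees $\ge -k$ and from $M(k_0),\dots,M(k_0-k)$; since each of these finitely many pieces is cohomologically bounded below (here the bounded Tate hypothesis and the fact that $\Tor^{\integers}_i=0$ for $i\ge 2$ enter), the boundedness of $C$ in each Adams degree is immediate. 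You instead pass to cohomology and run the strongly convergent K\"unneth spectral sequence $\Tor^{H^*(A)}(H^*(M),\unit)\Rightarrow H^*(M\otimes^{\bL}_A\unit)$, building the needed free resolution over $H^*(A)$ via lemma (\ref{juhtgd}) to control the Adams and cohomological degrees uniformly in the Tor degree. This is exactly the machinery the paper reserves for the harder lemma (\ref{main-tot}); applied here it works, but it is heavier than necessary. The Bar-complex argument buys concreteness and avoids any convergence discussion; your argument buys uniformity with the more difficult $\Tot$-commutation step, since it rehearses the same resolution/spectral-sequence bookkeeping.
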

\begin{proof}
We have to check that the functor $e \colon \D(A) \to \D(\unit)$ given by push forward
along the augmentation sends modules from $\C$ to modules which are Adams bounded from above
and in each Adams degree cohomologically bounded from below. Let $M \in \C$.
We can write $e(M)=A \otimes^\bL_{A \otimes^\bL A} (M \otimes \unit)$. This in turn is the realization
of a simplicial module $[n] \mapsto M \otimes^\bL A^{\otimes^\bL n}
\otimes^\bL \unit$. As in lemma (\ref{htp-b1-descr})
this realization can be computed as the total complex $C$ of a bicomplex
$$M \leftarrow M \otimes \overline{A} \leftarrow
M \otimes \overline{A}^{\otimes 2} \leftarrow
M \otimes \overline{A}^{\otimes 3} \leftarrow \cdots$$
Suppose $M$ is trivial in Adams degree $> k_0$.
Fix $k \ge 0$. The module $M \otimes \overline{A}^{\otimes l}$ only contributes
to Adams degree $k_0 - k$ in $C$ if $l \le k$ and then only the entries of $\overline{A}$
in Adams degree $\ge -k$ and the $M(k_0), \ldots , M(k_0-k)$.
This shows that $C$ is cohomologically bounded from below
in each Adams degree.
\end{proof}

\begin{lemma} \label{D-C}
The canonical functor $\D(B^\bullet) \to \D(A)$ sends $\D$ to $\C$.
\end{lemma}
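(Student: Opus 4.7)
The plan is to check the two defining conditions of $\C$ — Adams boundedness from above and cohomological boundedness from below in each Adams degree — separately, applied to $\Tot_A(M^\bullet)$ for $M^\bullet \in \D$. The key point is that Adams grading commutes with $\Tot_A$ since $\Tot_A$ is computed by the formula $\Total^\Pi(\UC M^\bullet)$ and the Adams grading is orthogonal to both the cosimplicial and cohomological directions. Hence $\Tot_A(M^\bullet)(k) \simeq \Tot((M^n(k))_n)$.

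First I would verify Adams boundedness from above. Pick $N$ with $M^0(k)\simeq 0$ for $k>N$. Since $M^\bullet$ is cartesian, $M^n \simeq M^0 \otimes^\bL_{B^0} B^n$, and corollary (\ref{uniform-bound}) gives that $B^n$ is of Tate type, in particular $B^n(l)\simeq 0$ for $l>0$. The Künneth-type spectral sequence (or simply examining the Adams grading on the tensor product) then shows $M^n(k)\simeq 0$ for $k>N$ and all $n \ge 0$. Consequently $\Tot_A(M^\bullet)(k)\simeq 0$ for $k>N$.

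Next I would verify the cohomological lower bound in each Adams degree $k$. By lemma (\ref{D-uniformly}), for each fixed $k$ there exists $n_0 \in \integers$ such that $H^n(M^i(k))=0$ for all $n<n_0$ and all $i\ge 0$. Lemma (\ref{coh-connected}) applied to the cosimplicial object $[i]\mapsto M^i(k)$ in $\Cpx(\Ab)$ then gives that $\Tot((M^i(k))_i) \simeq \Tot_A(M^\bullet)(k)$ is cohomologically bounded from below. Combining the two observations shows $\Tot_A(M^\bullet)\in\C$.

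The only subtlety is reducing the cohomological bound for the total object to a uniform bound on the cosimplicial entries — and that reduction is precisely the content of lemma (\ref{coh-connected}), while the uniform bound itself is exactly lemma (\ref{D-uniformly}). So the work has already been done in these two lemmas, and this corollary is essentially a formal assembly; I do not expect any real obstacle.
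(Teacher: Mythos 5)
Your proof is correct and follows essentially the same route as the paper: the cartesian condition $M^n \simeq M^0 \otimes^\bL B^n$ together with the Tate-type property of $B^n$ gives Adams boundedness of each $M^n$ (hence of $\Tot_A(M^\bullet)$), and the cohomological lower bound follows by combining lemma (\ref{D-uniformly}) with lemma (\ref{coh-connected}). The extra remark that the Adams grading commutes with $\Tot_A$ is a useful clarification the paper leaves implicit, but it is not a different argument.
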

\begin{proof}
Let $M^\bullet \in \D$. Suppose $M^0$ is trivial in Adams degree $> k_0$.
Then each $M^n$ is trivial in Adams degree $> k_0$, since $M^n \simeq M^0 \otimes^\bL B^n$
and $B^n$ is trivial in positive Adams degree. Thus $\Tot_A(M^\bullet)$ is trivial
in Adams degree $> k_0$ which shows that the image of $M^\bullet$ lies
in $\D(A)_{\mathrm{Aba}}$.

By lemma (\ref{D-uniformly}) $M^n$ is in each Adams degree cohomologically bounded from
below, uniformly in $n$. Lemma (\ref{coh-connected}) now shows that $\Tot_A(M^\bullet)$ is in each Adams
degree cohomologically bounded from below.
\end{proof}

\begin{lemma} \label{satisfies-unit}
The subcategory $\C$ satisfies the conditions of lemma (\ref{descent-unit}).
\end{lemma}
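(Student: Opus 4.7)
The plan is to verify in turn the three hypotheses (1), (2), (3) of lemma (\ref{descent-unit}), drawing almost entirely on the lemmas already established in this section; essentially no new analysis is required.

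For condition (1), I would apply lemma (\ref{main-tot}) to the map of bounded Tate algebras $A \to B$ (note $B \simeq \unit$, so $B$ is of bounded Tate type) and to the cosimplicial $A$-module $M^\bullet := M \otimes_A^\bL B^\bullet$ for $M \in \C$. This requires checking two boundedness hypotheses. Uniform Adams-boundedness from above follows because $M$ is Adams bounded from above (by definition of $\C$) and each $B^n$ is Adams concentrated in degrees $\le 0$ (as $B^\bullet$ is of bounded Tate type by corollary (\ref{uniform-bound})). For the uniform cohomological lower bound in each Adams degree, lemma (\ref{C-D}) shows $M \otimes_A^\bL B^\bullet \in \D$, and lemma (\ref{D-uniformly}) then gives exactly the uniform-in-$n$ cohomological lower bound in each Adams degree. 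Feeding these into lemma (\ref{main-tot}) yields the equivalence required in (1).

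Condition (2) is immediate from the composition of lemma (\ref{C-D}) and lemma (\ref{D-C}): the former sends $\C$ into $\D$ via $\_ \otimes_A^\bL B^\bullet$, and the latter sends $\D$ back into $\C$ via $\Tot_A$. For condition (3), the functor $\C \to \D(A) \to \D(B)$ is (up to the weak equivalence $B \to \unit$) the derived push-forward $U$ along the augmentation $e \colon A \to \unit$, i.e.\ the functor appearing in lemma (\ref{tate-conservative}). Since $\C \subset \D(A)_{\mathrm{Aba}}$ by the definition of $\C$, lemma (\ref{tate-conservative}) directly delivers the conservativity we need.

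The proof is really an exercise in bookkeeping rather than in new technique; the only step that requires any care is matching the hypotheses of lemma (\ref{main-tot}) with what lemmas (\ref{C-D}) and (\ref{D-uniformly}) provide. This is exactly why $\C$ was defined to consist of modules that are Adams bounded from above \emph{and} cohomologically bounded from below in each Adams degree, so the compatibility is by design.
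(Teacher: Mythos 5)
Your proof is correct and follows exactly the same route as the paper: condition (3) via lemma (\ref{tate-conservative}) using $\C\subset\D(A)_{\mathrm{Aba}}$, condition (2) via the composition of lemmas (\ref{C-D}) and (\ref{D-C}), and condition (1) via lemma (\ref{main-tot}) with hypotheses supplied by lemmas (\ref{C-D}) and (\ref{D-uniformly}). You simply fill in a bit more detail on how the boundedness hypotheses of lemma (\ref{main-tot}) are checked, which the paper leaves implicit.
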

\begin{proof}
By lemma (\ref{tate-conservative}) property (3) of lemma (\ref{descent-unit}) is
fulfilled. Property (2) follows from lemmas (\ref{C-D}) and (\ref{D-C}).
For property (1) we use lemma (\ref{main-tot}). The assumptions
of lemma (\ref{main-tot}) are fulfilled by lemma (\ref{C-D}) and lemma (\ref{D-uniformly}).
This shows the claim.
\end{proof}

\begin{lemma} \label{satisfies-counit}
The subcategory $\D$ satisfies the conditions of lemma (\ref{descent-counit}).
\end{lemma}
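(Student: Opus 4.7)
The plan is to verify the two conditions of lemma (\ref{descent-counit}) in turn, by appealing to the structural results already established for $\D$ together with lemma (\ref{main-tot}).

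For property (2), I would observe that for $M^\bullet \in \D$, lemma (\ref{D-C}) gives $\Tot_A(M^\bullet) \in \C$, and then lemma (\ref{C-D}) gives $\Tot_A(M^\bullet) \otimes^\bL_A B^\bullet \in \D$. So the composite $(\_ \otimes^\bL_A B^\bullet) \circ \Tot_A$ factors through $\C$ and lands in $\D$; property (2) is immediate.

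For property (1), I would invoke lemma (\ref{main-tot}) applied to the algebra map $A \to B$ and the cosimplicial module $M^\bullet \in \D$. Since $B \simeq \unit$ and $A$ is of bounded strict Tate type by the standing assumption, both source and target are bounded Tate algebras as required. To verify the two hypotheses on $M^\bullet$: since $M^\bullet$ is cartesian, $M^n \simeq M^0 \otimes^\bL_A B^n$, so the Adams upper bound on $M^0$ (say $M^0(k)\simeq 0$ for $k>k_0$) combined with the vanishing of $B^n$ in positive Adams degrees from corollary (\ref{uniform-bound}) yields a uniform Adams upper bound $N:=k_0$ on all the $M^n$; the uniform cohomological lower bound in each Adams degree $k\le N$ is then exactly lemma (\ref{D-uniformly}). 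Lemma (\ref{main-tot}) then delivers the required equivalence $\Tot_A(M^\bullet) \otimes^\bL_A B \simeq \Tot_B(M^\bullet \otimes^\bL_A B)$.

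The main obstacle is really just the verification of the boundedness hypotheses of lemma (\ref{main-tot}); this is not serious, however, since corollary (\ref{uniform-bound}) and lemma (\ref{D-uniformly}) were set up precisely to feed into them. Thus both properties follow essentially formally once the prior structural results on $\C$ and $\D$ are in hand.
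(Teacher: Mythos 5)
Your proof is correct and follows essentially the same route as the paper: property (2) is the composite of lemmas (\ref{D-C}) and (\ref{C-D}), and property (1) is lemma (\ref{main-tot}) with hypotheses supplied by corollary (\ref{uniform-bound}) and lemma (\ref{D-uniformly}); the paper's own proof is a terser version of exactly this. One small slip worth noting: for a cartesian $B^\bullet$-module the relevant identification is $M^n \simeq M^0 \otimes^\bL_{B^0} B^n$ (equivalently $M^0 \otimes^\bL_\unit B^n$, since $B^0 \simeq \unit$), not $M^0 \otimes^\bL_A B^n$ as you wrote — the latter would rather give $M^{n+1}$ — but this does not affect the boundedness conclusions you draw from it.
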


\begin{proof}
Property (2) follows from lemmas (\ref{D-C}) and (\ref{C-D}).
For property (1) we want to again apply lemma (\ref{main-tot}). The assumptions
are fulfilled by lemma (\ref{D-uniformly}).
This shows the claim.
\end{proof}


\begin{theorem} \label{C-D-equiv} Suppose $A$ is of bounded Tate type.
Then the adjunction $$\D(A) \rightleftarrows \D(B)_\cart$$ restricts to an
equivalence $\C \overset{\sim}{\longrightarrow} \D$.
\end{theorem}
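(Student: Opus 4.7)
The proof essentially consists of assembling the four preceding lemmas, so the plan is to package them cleanly. The ambient adjunction is the Quillen adjunction
\[
\_ \otimes^\bL_A B^\bullet \;\dashv\; \Tot_A \colon \D(A) \rightleftarrows \D(B^\bullet)
\]
associated to the coaugmentation $A \to B^\bullet$ (introduced in section \ref{prelim}), with the full subcategory $\D \subset \D(B^\bullet)_\cart$ stable under the counit because $\D(B^\bullet)_\cart$ is. So the first step is to check that this adjunction restricts to an adjunction between $\C$ and $\D$: Lemma \ref{C-D} shows the left adjoint sends $\C$ into $\D$, and Lemma \ref{D-C} shows $\Tot_A$ sends $\D$ into $\C$.

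The second step is to show the unit and counit of this restricted adjunction are invertible. For the unit: by Lemma \ref{satisfies-unit}, the subcategory $\C \subset \D(A)$ satisfies the three hypotheses of Lemma \ref{descent-unit}, which gives directly that the unit $M \to \Tot_A(M \otimes^\bL_A B^\bullet)$ is an isomorphism for every $M \in \C$. For the counit: by Lemma \ref{satisfies-counit}, the subcategory $\D \subset \D(B^\bullet)_\cart$ satisfies the two hypotheses of Lemma \ref{descent-counit}, which gives that the counit $\Tot_A(M^\bullet) \otimes^\bL_A B^\bullet \to M^\bullet$ is an isomorphism on every $M^\bullet \in \D$. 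A restricted adjunction with invertible unit and counit is an equivalence, which is the desired conclusion.

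The main obstacle is therefore not in the theorem itself but in the preceding technical lemmas that make the assembly legal. The genuine work sits in Lemma \ref{main-tot} (the compatibility of $\Tot_A$ with base change along $A \to B$), whose proof requires the uniform cohomological bounds from Corollary \ref{uniform-bound} and Lemma \ref{D-uniformly}, together with the K\"unneth spectral sequence and the explicit free resolution of Lemma \ref{juhtgd}. Once those bounds are in place, the $\Tot$-commutation holds in a range of Adams degrees that widens as one truncates the cosimplicial degree further, which is exactly what is needed to invoke the descent criteria of Lemmas \ref{descent-unit} and \ref{descent-counit}. The proof of Theorem \ref{C-D-equiv} itself is then a two-line citation of Lemmas \ref{C-D}, \ref{D-C}, \ref{satisfies-unit}, and \ref{satisfies-counit}.
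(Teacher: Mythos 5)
Your proof is correct and follows essentially the same route as the paper: restrict the adjunction via Lemmas \ref{C-D} and \ref{D-C}, then invoke Lemmas \ref{satisfies-unit} and \ref{satisfies-counit} to discharge the hypotheses of the descent criteria \ref{descent-unit} and \ref{descent-counit}, giving invertible unit and counit. Your surrounding commentary on where the real work sits (Lemma \ref{main-tot} and the uniform bounds) is accurate but not part of the formal argument.
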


\begin{proof}
By lemmas (\ref{C-D}) and (\ref{D-C}) the adjunction restricts to an
adjunction between $\C$ and $\D$. By lemmas (\ref{satisfies-unit}) and
(\ref{satisfies-counit}) we can apply lemmas (\ref{descent-unit}) and
(\ref{descent-counit}) to conclude that the unit and counit of the
induced adjunction are isomorphisms. This implies the claim.
\end{proof}

We want to see that the equivalence between $\C$ and $\D$ further restricts
to an equivalence between $\Perf(A)$ and $\Perf(B^\bullet)$. Define
$\Perf'(B^\bullet)$ to be the full triangulated subcategory of $\D(B^\bullet)_\cart$
generated by the trivial one-dimensional representation $B^\bullet$. Then
clearly $\Perf'(B^\bullet) \subset \Perf(B^\bullet)$, and the equivalence
between $\C$ and $\D$ restricts to an equivalence $\Perf(A) \overset{\sim}{\to}
\Perf'(B^\bullet)$. Note that in order to get this equivalence we would only
have needed the first half of the above arguments about proving that
the unit of the adjunction in question is an isomorphism.

The next result does not require $B^\bullet$ to come from an algebra of bounded Tate type.
Note that the categories $\Perf(B^\bullet)$ and $\Perf'(B^\bullet)$ make sense for
arbitrary affine derived group schemes $B^\bullet$.

\begin{proposition} \label{true-perf}
Suppose $B^\bullet$ is an affine derived group scheme such that $B^1$ is an
algebra of Tate type. Then
we have $\Perf'(B^\bullet)=\Perf(B^\bullet)$.
\end{proposition}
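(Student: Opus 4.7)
The inclusion $\Perf'(B^\bullet) \subseteq \Perf(B^\bullet)$ is tautological, so the substance lies in the reverse inclusion. My strategy is to exploit that $B^0 \simeq \unit$ (by the very definition of an affine derived group scheme) to replace $\D(B^\bullet)_\cart$ by $\D(\unit)$, where the structure of perfect modules is transparent.

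First I would set up an equivalence $\Psi \colon \D(\unit) \overset{\simeq}{\to} \D(B^\bullet)_\cart$. The derived base-change $(-) \otimes^\bL_{B^0} B^\bullet$ lands in cartesian modules, with evaluation at level $0$ as its right adjoint. The unit $N \to (N \otimes^\bL_{B^0} B^\bullet)^0 = N$ is tautologically an isomorphism, and the counit $M^0 \otimes^\bL_{B^0} B^\bullet \to M^\bullet$ is an isomorphism precisely by cartesianness. Since $B^0 \simeq \unit$, $\Psi$ sends $\unit(r)[s]$ to $B^\bullet(r)[s]$.

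By definition, $M^\bullet \in \Perf(B^\bullet)$ iff $M^0$ is a perfect $B^0 \simeq \unit$-module, so $\Psi$ restricts to an equivalence $\Perf(\unit) \simeq \Perf(B^\bullet)$. Now $\Perf(\unit) = \Perf(\D(\Ab)^\integers)$ is, by standard arguments, the thick triangulated subcategory of $\D(\unit)$ generated by $\unit$ together with its Adams shifts $\unit(r)$, $r \in \integers$, since every perfect Adams-graded complex of abelian groups is a finite cell object on the generators $\unit(r)[s]$. Transporting across $\Psi$, $\Perf(B^\bullet)$ is the thick triangulated subcategory of $\D(B^\bullet)_\cart$ generated by the Adams shifts of $B^\bullet$, which is exactly $\Perf'(B^\bullet)$ — taking the Adams shifts to be implicit in ``generated by $B^\bullet$'', consistent with the identification $\Perf(A) \simeq \Perf'(B^\bullet)$ invoked earlier in the section.

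The role of the Tate-type hypothesis on $B^1$ is to ensure the Adams-graded bookkeeping is well-behaved: by the Segal condition each $B^n$ is a derived tensor power of $B^1$ over $B^0 = \unit$, so if $B^1$ is of Tate type then so is each $B^n$, giving $B^n(0) \simeq \unit$ and $B^n(k) \simeq 0$ for $k > 0$. This is what makes the Adams shifts $B^\bullet(r)$ behave as genuine shifts of the unit object under $\Psi$. The main non-routine point is thus the Adams-graded bookkeeping; the substantive content — the equivalence $\D(B^\bullet)_\cart \simeq \D(\unit)$ — is a purely formal consequence of cartesianness and $B^0 \simeq \unit$.
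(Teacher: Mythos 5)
The core step of your argument---the claimed equivalence $\Psi \colon \D(\unit) \overset{\simeq}{\to} \D(B^\bullet)_\cart$ with inverse $\ev_0$---is false. Evaluation at level $0$ is not right adjoint to the base change $\_ \otimes^\bL_{B^0} B^\bullet$ (that role belongs to the totalization $\Tot$), and your ``counit'' $M^0 \otimes^\bL_{B^0} B^\bullet \to M^\bullet$ is not a well-defined map of cosimplicial modules: specifying it at level $n$ requires choosing a structure map $M^0 \to M^n$, i.e.\ a morphism $[0] \to [n]$, and no system of such choices is compatible with all cosimplicial operators $\varphi \colon [n] \to [m]$ since $\varphi$ need not carry the chosen vertex of $[n]$ to the chosen vertex of $[m]$. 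Concretely, for the cosimplicial Hopf algebra $A^\bullet$ of $\bG_m$ the paper proves $\D(A^\bullet)_\cart \simeq \D(\Ab)^{\integers}$ (theorem (\ref{j-equiv})), whereas $\D(A^0) = \D(\Ab)$; the weight-one representation $\integers(1)^\bullet$ has $\ev_0 = \integers$ but $\integers \otimes^\bL A^\bullet = A^\bullet \not\simeq \integers(1)^\bullet$. If your equivalence held for general affine derived group schemes, every representation would be trivial and the group would carry no information; in the Tate setting it would say $\DMT_\gm(X) \simeq \Perf(\unit)$, which is wrong already because $\Hom_{\DM(X)}(\integers(0),\integers(1)[1])$ is typically nonzero while $\Hom_{\D(\Ab)^\integers}(\unit(0),\unit(1)[1])=0$.

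The Tate-type hypothesis is also doing real work, not ``bookkeeping''. In the paper's proof $\varphi(N) = N \otimes^\bL B^\bullet$ is only \emph{one half} of an adjunction $\varphi \dashv \psi$, and neither composite is the identity. The condition $B^1(k) \simeq 0$ for $k>0$ and $B^1(0) \simeq \integers$ forces the cosimplicial operators of any cartesian $B^\bullet$-module to move things only into strictly lower Adams degrees; hence forgetting the $B^\bullet$-action does not disturb the \emph{top} Adams degree $n_0$ of $M^0$, so the composite $\varphi(N) \to \varphi(\psi(M^\bullet)) \to M^\bullet$ (with $N$ the degree-$n_0$ piece of $\psi(M^\bullet)$) is an isomorphism in Adams degree $n_0$. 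One then passes to the cofiber, which has strictly smaller Adams width, and inducts on $n_0 - m_0$. That peeling-off induction is the actual mechanism; there is no formal identification of $\D(B^\bullet)_\cart$ with $\D(\unit)$.
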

\begin{proof}
Let $M^\bullet \in \Perf(B^\bullet)$, i.e. $M^\bullet \in \D(B^\bullet)_\cart$ and
$M^0$ is a perfect object in $\D(\Ab)^\integers$. This means that $M^0$ only lives
in finitely many Adams degrees and that the complex in each Adams degree is perfect.
We can assume without loss of generality that $M^\bullet$ is non-trivial.
Let $\varphi \colon \D(\unit) \rightleftarrows \D(B^\bullet)_\cart \colon \psi$ be the adjunction
where $\varphi$ assigns to a module $N$ the trivial $B^\bullet$-representation on $N$
and $\psi$ is the right adjoint of $\varphi$. 

Let $n_0$ be the biggest Adams degree in which $M^0$ is non-trivial and $m_0$ the smallest
such integers. We show that $M^\bullet$ lies in the image of $\Perf'(B^\bullet) \to \Perf(B^\bullet)$
by induction on $n_0 - m_0$.
Consider the counit $\varphi(\psi(M^\bullet)) \to M^\bullet$. We claim that the map
$\varphi(\psi(M^\bullet))^0 \to M^0$ is an isomorphism in Adams degree $n_0$.

Indeed, since $B^1$ is of Tate type, restricting the representation $M^\bullet$ to the trivial group, i.e. pushing
forward along $\D(B^\bullet)_\cart \to \D(\unit^\bullet)_\cart$, does not change the
Adams degree $n_0$ part of $M^\bullet$, and the counit for the adjunction
$\D(\unit) \rightleftarrows \D(\unit^\bullet)_\cart$ is an isomorphism.

We let $N(n_0)=\psi(M^\bullet)(n_0)$ and $N(n)=0$ for $n \neq 0$. We let
$f \colon N \to \psi(M^\bullet)$ be the canonical map in $\D(\Ab)^\integers$.
Then the composition $$g \colon \varphi(N) \overset{\varphi(f)}{\longrightarrow} \varphi(\psi(M^\bullet))
\to M^\bullet$$
is an isomorphism in Adams degree $n_0$. By induction hypothesis (or in the case $n_0=m_0$ since $g$
is an isomorphism) the cofiber of $g$ lies in the image of $\Perf'(B^\bullet) \to \Perf(B^\bullet)$,
hence we are done.
\end{proof}

\begin{theorem} \label{perf-descent-thm}
Let $A$ be of bounded Tate type. Then the canonical map $\Perf(A) \to \Perf(B^\bullet)$
is an equivalence.
\end{theorem}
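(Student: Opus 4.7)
The plan is to combine Theorem \ref{C-D-equiv} with Proposition \ref{true-perf}, via the auxiliary subcategory $\Perf'(B^\bullet)$ already introduced in the paragraph preceding Proposition \ref{true-perf}.

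First I would verify that the equivalence $\C \overset{\sim}{\to} \D$ from Theorem \ref{C-D-equiv} restricts to an equivalence $\Perf(A) \overset{\sim}{\to} \Perf'(B^\bullet)$. Since $\Perf(A)$ is the thick subcategory of $\D(A)$ generated by $A$ itself, and since $A \in \C$ is sent to the trivial representation $B^\bullet \in \D$ (which generates $\Perf'(B^\bullet)$ by definition), the restriction follows formally from the fact that the equivalence $\C \simeq \D$ is triangulated and symmetric monoidal, carrying the generator of $\Perf(A)$ to the generator of $\Perf'(B^\bullet)$. This identifies the thick subcategory generated by $A$ with the thick subcategory generated by $B^\bullet$, as remarked in the paragraph before Proposition \ref{true-perf}.

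Next I would invoke Proposition \ref{true-perf} to upgrade $\Perf'(B^\bullet)$ to all of $\Perf(B^\bullet)$. The hypothesis of that proposition requires $B^1$ to be of Tate type, which is supplied directly by Corollary \ref{uniform-bound}: since $A$ is of bounded strict Tate type (after replacing $A$ by its strict Tate-type model if necessary), each $B^n$, and in particular $B^1$, is of bounded Tate type, hence a fortiori of Tate type.

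Combining the two steps yields a chain of equivalences $\Perf(A) \overset{\sim}{\to} \Perf'(B^\bullet) = \Perf(B^\bullet)$, which is the desired statement. The main obstacle is not technical at this stage, since both of the key inputs have been proved; the only point requiring some care is checking compatibility between the restriction of the equivalence from Theorem \ref{C-D-equiv} and the definition of perfect modules, i.e.\ that a perfect $A$-module indeed lands in $\Perf'(B^\bullet)$ and conversely that objects of $\Perf'(B^\bullet)$ pull back to perfect $A$-modules, both of which follow because the equivalence is triangulated and preserves the generator.
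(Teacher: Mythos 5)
Your proposal is correct and matches the paper's proof exactly: the paper's own argument is the one-line ``This follows from theorem (\ref{C-D-equiv}) and proposition (\ref{true-perf}),'' relying on the paragraph preceding Proposition (\ref{true-perf}) which introduces $\Perf'(B^\bullet)$ and observes that the equivalence $\C \simeq \D$ restricts to $\Perf(A) \simeq \Perf'(B^\bullet)$. You have in fact spelled out one more detail than the paper does, namely the verification via Corollary (\ref{uniform-bound}) that $B^1$ is of Tate type so that Proposition (\ref{true-perf}) applies; this is a reasonable and correct addition.
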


\begin{proof}
This follows from theorem (\ref{C-D-equiv}) and proposition (\ref{true-perf}).
\end{proof}

\section{General coefficients}

In this section we want to generalize the previous results to general coefficients.
For that let $R$ be a commutative ring and let $A$ be an algebra of bounded Tate type.
We denote by $A_R$ a (derived) base change of $A$ to $R$, i.e. $A_R=A \otimes^\bL R$.
Let $A' \to A$ be the replacement which is of strict Tate type. Factor
the augmentation $A' \to \unit$ into a cofibration $A \to B$ followed by an equivalence
$B \to \unit$. Let $A \to B^\bullet$ be the coaugmented cosimplicial algebra associated
to $A \to B$. Let $B^\bullet_R :=B^\bullet \otimes^\bL R$.
Let $$b_R \colon \D(A) \rightleftarrows \D(A_R) \colon r_R$$ and
$$b_R' \colon \D(B^\bullet) \rightleftarrows \D(B_R^\bullet) \colon r_R'$$ be the obvious adjunctions.
Recall the subcategories $\C \subset \D(A)$ and $\D \subset \D(B^\bullet)_\cart$
defined in the previous section. We let $\C_R \subset \D(A_R)$ be the full subcategory
of objects $X$ such that $r_R(X) \in \C$ and $\D_R \subset \D(B_R^\bullet)_\cart$
the full subcategory of objects $X$ such that $r_R'(X) \in \D$.

\begin{theorem} \label{R-C-D} Suppose $A$ is of bounded Tate type.
The adjunction $\D(A_R) \rightleftarrows \D(B_R^\bullet)$ restricts to an
equivalence $\C_R \to \D_R$.
\end{theorem}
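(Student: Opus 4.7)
The plan is to reduce the statement to the integral version established in Theorem \ref{C-D-equiv} by exploiting the compatibility of base change along $\integers \to R$ with the base change along $A \to B^\bullet$. Since the subcategories $\C_R \subset \D(A_R)$ and $\D_R \subset \D(B_R^\bullet)_\cart$ are defined via the conservative restriction functors $r_R, r_R'$, it suffices to show that the unit and counit of the $R$-adjunction become the integral unit and counit after applying $r_R, r_R'$.

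First I would record the key compatibility. Since $A_R \simeq A \otimes^\bL R$ and $B_R^\bullet \simeq B^\bullet \otimes^\bL R$, for any $A_R$-module $M$ there is a natural equivalence
$$r_R'(M \otimes_{A_R}^\bL B_R^\bullet) \simeq r_R(M) \otimes_A^\bL B^\bullet$$
of $B^\bullet$-modules (both sides are obtained from the same underlying complex). Dually, $r_R'$ is a right adjoint to the base change $\_ \otimes^\bL R$ and therefore preserves homotopy limits; in particular for any $N^\bullet \in \D(B_R^\bullet)$ one has a natural equivalence
$$r_R(\Tot_{A_R}(N^\bullet)) \simeq \Tot_A(r_R'(N^\bullet)).$$

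With these identifications in hand, the fact that $\_ \otimes_{A_R}^\bL B_R^\bullet$ sends $\C_R$ into $\D_R$ follows directly from Lemma \ref{C-D} applied to $r_R(M) \in \C$, and the fact that $\Tot_{A_R}$ sends $\D_R$ into $\C_R$ follows from Lemma \ref{D-C} applied to $r_R'(N^\bullet) \in \D$. Thus the adjunction restricts to an adjunction $\C_R \rightleftarrows \D_R$.

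For the unit and counit, the compatibilities above identify $r_R$ applied to the $R$-unit $M \to \Tot_{A_R}(M \otimes_{A_R}^\bL B_R^\bullet)$ with the integral unit $r_R(M) \to \Tot_A(r_R(M) \otimes_A^\bL B^\bullet)$, which is an isomorphism by Theorem \ref{C-D-equiv} since $r_R(M) \in \C$; and similarly for the counit after applying $r_R'$. Because $r_R$ and $r_R'$ are restriction of scalars along $A \to A_R$ and $B^\bullet \to B_R^\bullet$ and so do not change underlying complexes, they are conservative, so the original unit and counit are already isomorphisms. This yields the desired equivalence. The argument is essentially bookkeeping; the only point that needs any care is verifying that $r_R'$ commutes with $\Tot$, and this is purely formal since $r_R'$ is a right adjoint.
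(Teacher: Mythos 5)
Your proof is correct and follows essentially the same route as the paper: exhibit the two $2$-commutative squares relating $(\_ \otimes^\bL B^\bullet, \Tot)$ over $\integers$ and over $R$ via the forgetful functors $r_R$, $r_R'$, observe that the latter detect isomorphisms, and deduce everything from theorem (\ref{C-D-equiv}); you merely spell out the step showing the adjunction restricts to $\C_R \rightleftarrows \D_R$, which the paper leaves implicit. The one small imprecision is the derivation of $r_R \circ \Tot_{A_R} \simeq \Tot_A \circ r_R'$ from "$r_R'$ preserves homotopy limits"; the cleaner justification is to pass to right adjoints in the square of left adjoints $b_R' \circ (\_ \otimes^\bL_A B^\bullet) \simeq (\_ \otimes^\bL_{A_R} B_R^\bullet) \circ b_R$, or to note that $r_R$ (not $r_R'$) preserves the relevant homotopy limit and the cosimplicial diagrams agree on underlying complexes.
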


\begin{proof}
The diagrams
$$\xymatrix{\D(A_R) \ar[r] \ar[d]^{r_R} & \D(B_R^\bullet) \ar[d]^{r_R'} \\
\D(A) \ar[r] & \D(B^\bullet)}$$
and
$$\xymatrix{\D(A_R) \ar[d]^{r_R} & \D(B_R^\bullet) \ar[d]^{r_R'} \ar[l] \\
\D(A) & \D(B^\bullet) \ar[l]}$$
are $2$-commutative. Moreover the functors $r_R$ and $r_R'$ detect
isomorphisms. The claim follows then from theorem (\ref{C-D-equiv}).
\end{proof}

Let $\Perf(A_R) \subset \D(A_R)$ be the full subcategory of perfect $A_R$-modules and
let $\Perf(B_R^\bullet) \subset \D(B_R^\bullet)_\cart$ be the full subcategory of
$B_R^\bullet$-modules $M^\bullet$ such that each $M^n$ is a perfect $B_R^n$-module,
or equivalently such that $M^0$ is a perfect $B^0 \simeq R$-module.

Note that $\Perf(A_R) \subset \C_R$ and $\Perf(B_R^\bullet) \subset D_R$ (for the first inclusion we use
that $\mathrm{Tor}^\integers_i(M,N)=0$ for $i \ge 2$, $M$ and $N$ abelian groups).

\begin{theorem} \label{R-perf-equiv}
Suppose $A$ is of bounded Tate type. Then the natural functor
$\Perf(A_R) \to \Perf(B_R^\bullet)$ is an equivalence.
\end{theorem}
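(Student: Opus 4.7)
The plan is to follow the blueprint of theorem \ref{perf-descent-thm}: first cut down the equivalence $\C_R \simeq \D_R$ of theorem \ref{R-C-D} to a comparison between $\Perf(A_R)$ and the triangulated subcategory of $\D(B_R^\bullet)_\cart$ generated by the unit, and then identify that subcategory with $\Perf(B_R^\bullet)$.

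First I would introduce $\Perf'(B_R^\bullet) \subset \D(B_R^\bullet)_\cart$, the full triangulated subcategory generated by $B_R^\bullet$. Since the base change functor $\D(A_R) \to \D(B_R^\bullet)_\cart$ is tensor triangulated and sends $A_R$ to $B_R^\bullet$, it carries $\Perf(A_R)$ (the thick subcategory generated by $A_R$) into $\Perf'(B_R^\bullet)$. Conversely the inverse of the equivalence $\C_R \simeq \D_R$ of theorem \ref{R-C-D} takes $B_R^\bullet$ back to $A_R$ and, being triangulated, carries $\Perf'(B_R^\bullet)$ into $\Perf(A_R)$. Both inclusions already sit inside the equivalence $\C_R \simeq \D_R$, so the adjunction restricts to an equivalence $\Perf(A_R) \simeq \Perf'(B_R^\bullet)$.

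The remaining step, and the one where I expect the main technical work, is the $R$-linear analogue of proposition \ref{true-perf}, namely $\Perf'(B_R^\bullet) = \Perf(B_R^\bullet)$. The substantial input used in proposition \ref{true-perf} is that $B^1$ is an algebra of Tate type, which ensures that restriction of a representation along $B^\bullet \to \unit^\bullet$ is an isomorphism in the top non-vanishing Adams degree. Over $R$ one needs the analogous fact for $B_R^1$: that $B_R^1(k) \simeq 0$ for $k>0$ and that $R \to B_R^1(0)$ is an equivalence. Both properties follow by base change from the corresponding facts for $B^1$ established in the course of corollary \ref{uniform-bound}, since $B^1(k) \otimes^\bL R \simeq 0$ for $k>0$ and $\integers \otimes^\bL R \simeq R$ in Adams degree $0$.

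Granted this, the induction on the Adams range $n_0 - m_0$ from the proof of proposition \ref{true-perf} goes through verbatim with $\unit$ replaced by $R$ throughout: one applies the counit for the trivial-group adjunction $\D(R) \rightleftarrows \D(R^\bullet)_\cart$ (which is an isomorphism), uses the top-Adams-degree conservativity supplied by $B_R^1$ being of Tate type over $R$, and peels off one Adams component at a time to exhibit an arbitrary $M^\bullet \in \Perf(B_R^\bullet)$ as an iterated extension of shifts and twists of $B_R^\bullet$. Composing this equality with the equivalence $\Perf(A_R) \simeq \Perf'(B_R^\bullet)$ of the second paragraph yields the claimed equivalence $\Perf(A_R) \simeq \Perf(B_R^\bullet)$.
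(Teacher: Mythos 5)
Your proposal is correct and follows essentially the same route as the paper: the paper appeals to Theorem \ref{R-C-D} to reduce to essential surjectivity and then says that works "exactly as in the proof of proposition \ref{true-perf}," which is precisely the $\Perf' = \Perf$ step you spell out via the base-change observation that $B_R^1$ remains of Tate type over $R$. Your version is slightly more explicit about the intermediate category $\Perf'(B_R^\bullet)$ and the hypothesis transfer, but the substance and structure match.
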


\begin{proof}
By theorem (\ref{R-C-D}) we are left to prove that the functor
in question is essentially surjective.
This works exactly as in the proof of proposition (\ref{true-perf}).
\end{proof}

We now turn to the case where $A$ is given over a commutative base ring $R$.
So suppose $A$ is a commutative $\S$-algebra in $\Cpx(R)^\integers$.
The definition of Tate type, strict Tate Type and bounded Tate type works
as in the case over the integers.

We suppose from now on that $A$ is of bounded Tate type. 
Without loss of generality we can assume that $A$ is also of strict Tate type.

Factor the canonical augmentation $A \to \unit_R$ into a cofibration
$A \to B$ followed by a weak equivalence $B \to \unit_R$.
Let $A \to B^\bullet$ be the coaugmented cosimplicial algebra associated
to $A \to B$. As in the case over the integers we can define
the full subcategories $\C \subset \D(A)$ and $\D \subset \D(B^\bullet)_\cart$.

\begin{theorem} \label{R-C-D-equiv}
Suppose $A$ is of bounded Tate type and that $R$ has finite homological dimension.
Then the adjunction $$\D(A) \rightleftarrows \D(B)_\cart$$ restricts to an
equivalence $\C \overset{\sim}{\longrightarrow} \D$.
\end{theorem}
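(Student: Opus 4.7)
The plan is to mimic the structure of the proof of Theorem \ref{C-D-equiv} verbatim, replacing only those estimates that implicitly used that $\integers$ has global dimension $1$. Concretely, I would first record that the subcategories $\C \subset \D(A)$ and $\D \subset \D(B^\bullet)_\cart$ can be defined over $R$ by the same Adams boundedness and cohomological boundedness conditions, and that the analogues of Lemmas \ref{C-D}, \ref{D-C}, \ref{D-uniformly}, and \ref{coh-connected} go through without any modification, since their proofs only manipulate total complexes and bicomplexes in $\Cpx(R)^\integers$ and never refer to resolutions. This already gives that the adjunction restricts between $\C$ and $\D$, and reduces the theorem to checking that the unit and counit of the adjunction are equivalences, i.e., to establishing the appropriate analogues of Lemmas \ref{satisfies-unit} and \ref{satisfies-counit}. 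As in the integral case, everything ultimately hinges on the analogue of Lemma \ref{main-tot}.

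For Lemma \ref{main-tot} over $R$, the one input that genuinely uses $R = \integers$ is the resolution in Lemma \ref{juhtgd}(a), where one constructs an $A$-free resolution of a Tate type module in a $2$-periodic fashion, exploiting that subgroups of free abelian groups are free. I would replace this by a resolution that is $(d{+}1)$-periodic (up to Adams truncation), where $d$ is the homological dimension of $R$: at each stage, the required module in the top Adams range is a submodule of a finitely generated free $R$-module, so after at most $d$ further free steps we may splice in a fresh free cover. This yields the same statement as Lemma \ref{juhtgd} with the constants $2s$ and $2s{-}1$ replaced by $(d{+}1)s$ and $(d{+}1)s{-}1$ (or similar linear expressions in $s$ and $d$), and with the vanishing in (1) holding for $i \ge (d{+}1)s$.

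With this modified resolution in hand, the K\"unneth spectral sequence argument in the proof of Lemma \ref{main-tot} proceeds with the same bookkeeping: the only change is that the upper bound (\ref{hngfd}) becomes a linear expression in $s$ whose coefficient now depends on $d$. Crucially, it is still eventually dominated by $k_0$ as we let $k_0$ grow, because the dependence on $k_0$ is linear with coefficient $1$ while the correction terms depend only on $s$, $B$, $B'$, and $d$. Hence, for each fixed $s$, we can drive the quasi-isomorphism range in cohomological degree arbitrarily high by enlarging $k_0$, which gives the identification $\Tot_A(M^\bullet) \otimes_A^\bL C \simeq \Tot_C(L^\bullet)$ just as before. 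The boundedness assertions for $L^\bullet$ follow by the same spectral sequence, now using $\mathrm{Tor}^R_i = 0$ for $i > d$ in place of $i > 1$.

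The hard part is the bookkeeping in the modified version of Lemma \ref{juhtgd}: one must ensure that the new resolution still controls the generators of $P_i$ in the Adams range $[N-s, N]$ with sufficiently uniform lower bounds on cohomological degrees, so that the K\"unneth argument gives a bound of the form $B' + k_0 + n_0 - c_1(d) s - c_2(d)$ with constants $c_i$ depending only on $d$. Once this is in place, Lemmas \ref{satisfies-unit} and \ref{satisfies-counit} transfer without further change, and the theorem follows from the $R$-analogue of Lemma \ref{descent-unit} and Lemma \ref{descent-counit} combined with the conservativity of the pushforward (Lemma \ref{tate-conservative}), whose proof is insensitive to the base ring.
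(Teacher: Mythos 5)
Your proposal matches the paper's approach in its essentials: the paper's own proof is the terse remark that the steps are the same as for Theorem (\ref{C-D-equiv}) with the finite homological dimension of $R$ used in the analogues of Corollary (\ref{uniform-bound}) and Lemmas (\ref{juhtgd}), (\ref{main-tot}), (\ref{D-uniformly}), (\ref{C-D}), (\ref{D-C}), and your plan to lengthen the resolution in Lemma (\ref{juhtgd}) from a $2$-periodic to a $(d{+}1)$-periodic one and to replace $\mathrm{Tor}^\integers_i=0$ for $i>1$ by $\mathrm{Tor}^R_i=0$ for $i>d$ is exactly what is required.

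There is, however, a genuine inaccuracy in the proposal. You claim that the analogues of Lemmas (\ref{C-D}), (\ref{D-C}), (\ref{D-uniformly}) (and implicitly Corollary (\ref{uniform-bound})) go through \emph{without any modification} because ``their proofs only manipulate total complexes and bicomplexes \dots and never refer to resolutions.'' This is not correct, and the paper explicitly lists all of these among the places where the hypothesis on $R$ is used. The point is that the cohomological boundedness of the total complex of the bar-type bicomplex
$$M \leftarrow M \otimes \overline{A} \leftarrow M \otimes \overline{A}^{\otimes 2} \leftarrow \cdots$$
(and likewise of $(B^1)^{\otimes^\bL n}$ in Corollary (\ref{uniform-bound})) is deduced from the boundedness of the factors $\overline{A}(l)$ by controlling the cohomology of the derived tensor products; the proof of Corollary (\ref{uniform-bound}) invokes $\mathrm{Tor}^\integers_i(M,N)=0$ for $i>1$ precisely for this purpose. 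Over a general $R$, a tensor product of two cohomologically bounded-below complexes need not be cohomologically bounded below unless $\mathrm{Tor}^R_i$ vanishes uniformly for $i\gg 0$, which is where the finite homological dimension of $R$ enters. Since Lemma (\ref{D-uniformly}) in turn cites Corollary (\ref{uniform-bound}), it inherits the same dependence. The fix is exactly the replacement you already propose for Lemma (\ref{main-tot}), namely $\mathrm{Tor}^R_i=0$ for $i>d$, so this is a misattribution of where the hypothesis is needed rather than a fatal gap; but you should not present these lemmas as hypothesis-free.
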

\begin{proof}
The steps are as for theorem (\ref{C-D-equiv}). Our assumption on $R$ is needed
in the analogues of corollary (\ref{uniform-bound}), lemma (\ref{juhtgd}),
lemma (\ref{main-tot}), lemma (\ref{D-uniformly}),
lemma (\ref{C-D}) and lemma (\ref{D-C}). 

All other steps are exactly as for theorem (\ref{C-D-equiv}).
\end{proof}
As in the case over the integers we define subcategories
$\Perf(A) \subset \D(A)$ and $\Perf(B^\bullet) \subset \D(B^\bullet)_\cart$.

\begin{theorem} \label{RR-C-D}
Suppose $A$ is of bounded Tate type and that $R$ has finite homological dimension.
Then the natural functor
$\Perf(A) \to \Perf(B^\bullet)$ is an equivalence.
\end{theorem}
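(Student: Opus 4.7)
The plan is to mimic exactly the proof of Theorem \ref{perf-descent-thm}, combining the equivalence $\C \simeq \D$ provided by Theorem \ref{R-C-D-equiv} with an analogue of Proposition \ref{true-perf}. The finite homological dimension hypothesis on $R$ is only needed to justify Theorem \ref{R-C-D-equiv}; once that is in hand, the deduction of the perfect equivalence is essentially formal.

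First I would verify that $\Perf(A) \subset \C$ and $\Perf(B^\bullet) \subset \D$. Since the tensor unit $\unit_R \in \Cpx(R)^\integers$ lives in Adams degree $0$ with underlying complex $R$, and the finite homological dimension of $R$ guarantees the $\mathrm{Tor}^R_i$-groups relevant to forming cones and retracts vanish uniformly, a perfect $A$-module is concentrated in finitely many Adams degrees and cohomologically bounded from below in each; likewise for $\Perf(B^\bullet)$. Thus the equivalence of Theorem \ref{R-C-D-equiv} restricts to a fully faithful embedding $\Perf(A) \hookrightarrow \Perf(B^\bullet)$ (essential surjectivity being the content that remains to be proved).

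Next I would introduce $\Perf'(B^\bullet) \subset \D(B^\bullet)_\cart$ as the full triangulated subcategory generated by the trivial one-dimensional representation $B^\bullet$. Since $A \in \Perf(A)$ corresponds to $B^\bullet \in \Perf(B^\bullet)$ under the base change functor, and $\Perf(A)$ is by definition generated by $A$ via shifts, cones and retracts, the equivalence $\C \simeq \D$ restricts to an equivalence $\Perf(A) \xrightarrow{\sim} \Perf'(B^\bullet)$. For this restricted statement one really only needs the half of Theorem \ref{R-C-D-equiv} dealing with the unit of the adjunction, applied to the generator $A$.

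Finally, I would invoke an analogue of Proposition \ref{true-perf} to conclude $\Perf'(B^\bullet) = \Perf(B^\bullet)$. Inspecting that proof, the only ingredients it uses are (i) that $B^1$ is an algebra of Tate type, so that pushforward of a cartesian representation to the trivial group preserves the top Adams degree, and (ii) the counit isomorphism for the adjunction $\D(\unit_R) \rightleftarrows \D(\unit_R^\bullet)_\cart$; both are immediate in the relative setting over $R$, since base change preserves Tate type. The induction on the Adams range of $M^0$ then runs verbatim. Combining this with the previous step gives the desired equivalence $\Perf(A) \xrightarrow{\sim} \Perf(B^\bullet)$. The only genuine work lies upstream in Theorem \ref{R-C-D-equiv}, where the Künneth spectral sequence and $\mathrm{Tor}$-vanishing arguments of lemmas like (\ref{juhtgd}) and (\ref{main-tot}) must be re-executed with $\mathrm{Tor}^R$ replacing $\mathrm{Tor}^\integers$; the finite homological dimension of $R$ provides the uniform vanishing bound that formerly came from $\mathrm{Tor}^\integers_i = 0$ for $i \ge 2$.
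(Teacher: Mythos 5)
Your proposal matches the paper's argument: it derives the perfect equivalence from Theorem (\ref{R-C-D-equiv}) and runs the essential-surjectivity induction of Proposition (\ref{true-perf}) in the relative setting. One small inaccuracy: the finite homological dimension of $R$ is not actually needed for the inclusions $\Perf(A)\subset\C$ and $\Perf(B^\bullet)\subset\D$ (a perfect complex over any ring is cohomologically bounded, and $A\in\C$ already since it is of bounded Tate type); as you correctly say at the outset, that hypothesis enters only through the Künneth and $\mathrm{Tor}^R$ arguments underlying Theorem (\ref{R-C-D-equiv}).
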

\begin{proof}
By theorem \ref{R-C-D-equiv} we are left to prove that the functor
in question is essentially surjective.
This works exactly as in the proof of proposition \ref{true-perf}.
\end{proof}

\begin{remark} \label{RR'-remark}
It is possible to generalize theorems (\ref{R-C-D}) and
(\ref{R-perf-equiv}) to the relative case where we are given a map of commutative
algebras $R \to R'$ and $R$ satisfies the assumption of theorem (\ref{RR-C-D}).
We start with an algebra $A$ of bounded Tate type over $R$ as above and
build $A_{R'}=A \otimes^\bL_R R'$ and $B_{R'}^\bullet=B^\bullet \otimes^\bL_R R'$.
Then the corresponding versions of theorems (\ref{R-C-D}) and
(\ref{R-perf-equiv}) are still valid.
\end{remark}

\section{Representations of $\bG_m$} \label{Gm-rep}

In the whole section we work with strictly commutative
algebras and the usual notion of
module (in contrast to the possibility that
we view a strictly commutative algebra as
an $E_\infty$-algebra and work with the corresponding
notion of module).

\medskip

We denote by $G$ the
group scheme $\bG_m$ over $\integers$.
Let $A=\integers[z^{\pm 1}]$ be the corresponding
Hopf algebra and $A^n$ its $n$-th tensor power as an algebra.
The $A^n$ assemble to the cosimplicial algebra $A^\bullet$
in $\Cpx(\Ab)$ modelling $G$.

We denote by $\Rep^\str(G)$ the category of strict
representations of $G$ with values in
$\Cpx(\Ab)$, i.e. the category of complexes of
$A$-comodules. Also we denote by $\Mod(A^\bullet)$ the category
of complexes of $A^\bullet$-modules.

There is an embedding $\iota^\str: \Rep^\str(G)
\to \Mod(A^\bullet)$ whose image is the
subcategory of strictly cartesian $A^\bullet$-modules.

Let $j_1: \Cpx(\Ab)^\integers \to \Rep^\str(G)$
be the functor which assigns to a complex
$M$ sitting totally in Adams degree $r$
the unique representation of $G$ on $M$ with
weight $r$ and which commutes with sums.

So the value of $j_1$ on $(M(r))_{r \in \integers}$
is $\bigoplus_{r \in \integers} M(r)\otimes \integers(r)$,
where we denote $\integers(r)$ the weight $r$ representation
of $G$ on $\integers$.

Denote by $j$ the composition $\iota^\str \circ j_1$.
Note this is a tensor functor.

Clearly we have a derived functor
$$\bL j: \D(\Ab)^\integers \to \D(A^\bullet)$$
which factors as
$$\bL j: \D(\Ab)^\integers \to \D(A^\bullet)_\cart.$$

The functor $j$ has a right
adjoint $u$ given by
$$u(F^\bullet)(r)_m=
\Hom_{\Mod(A^\bullet)}(j(D^{m,r}(\integers)),F^\bullet)
\text{.}$$

To show that $\bL j$ has a right
adjoint it is easiest to show that $j$
is a left Quillen functor for adequate model structures.
On $\Cpx(\Ab)^\integers$ we take the usual projective
model structure. The model structure
on $\Mod(A^\bullet)$ should fulfill the
property that levelwise (for the cosimplicial
direction) projective cofibrations are cofibrations,
which for example is fulfilled by the
injective model structure (i.e. a map is
a cofibration if and only if it is a monomorphism).
Hence it follows that $u$ has a right derived functor
$\bR u$ which is right adjoint to $\bL j$.
Note that for this argument we do not need to use
a symmetric monoidal model structure on $\Mod(A^\bullet)$.

We first give a proof of the following
fact.

\begin{lemma} \label{Gm-perf-rep}
The restriction of $\bL j$ to perfect objects
gives a fully faithful embedding of tensor triangulated categories
$$(\D(\Ab)^\integers)^\perf \to \Perf(A^\bullet)
\text{.}$$
\end{lemma}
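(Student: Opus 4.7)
The plan is to use the derived adjunction $\bL j \dashv \bR u$ to reduce the claim to a single cohomology computation for the Hopf algebra $A = \integers[z^{\pm 1}]$. Fully faithfulness of $\bL j$ on perfect objects is equivalent to the unit $\eta_N \colon N \to \bR u \bL j(N)$ being an equivalence for every perfect $N$, since by adjunction $\Hom_{\D(A^\bullet)}(\bL j(M), \bL j(N)) \cong \Hom_{\D(\Ab)^\integers}(M, \bR u\bL j(N))$. The objects on which $\eta$ is an equivalence form a thick triangulated subcategory. Now $(\D(\Ab)^\integers)^\perf$ is the thick triangulated subcategory generated by the $\otimes$-invertible objects $e(r)$, $r \in \integers$, where $e(r)$ denotes $\integers$ placed in Adams degree $r$; hence it suffices to verify that $\eta$ is an equivalence on each $e(r)$.

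Extracting the Adams degree $s$ component and applying adjunction, the statement becomes
\[
\Hom_{\D(A^\bullet)}^*(\bL j(e(s)), \bL j(e(r))) \;=\; \begin{cases} \integers & r = s \text{ and } * = 0, \\ 0 & \text{otherwise.} \end{cases}
\]
Since $\bL j$ is monoidal ($j$ being a tensor functor) and each $e(k)$ is $\otimes$-invertible, this reduces to showing, for every $k \in \integers$, that $\Hom_{\D(A^\bullet)}^*(A^\bullet, \bL j(e(k)))$ equals $\integers$ in cohomological degree $0$ when $k = 0$, and vanishes otherwise.

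The key computation is as follows. The cartesian $A^\bullet$-module $\bL j(e(k))$ corresponds to the weight-$k$ representation $\integers(k)$ of $G = \bG_m$, and the derived Hom out of the unit $A^\bullet$ is computed by the cobar (Amitsur) complex of $A$ with coefficients in $\integers(k)$. Since the abelian category of $A$-comodules is equivalent to the category of $\integers$-graded abelian groups (every comodule decomposes canonically by weight), the classical $\mathrm{Ext}^n_{A\text{-comod}}(\integers, \integers(k))$ is $\integers$ when $n = 0 = k$ and vanishes otherwise. The main obstacle is justifying the identification of this classical $\mathrm{Ext}$ with the derived Hom in $\D(A^\bullet)_\cart$ defined via the cosimplicial injective model structure. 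I would handle this by writing out the weight-$k$ piece of the cobar complex $\integers \to A \to A \otimes A \to \cdots$ explicitly, namely a bounded-below complex of free abelian groups with generators indexed by tuples $(a_1, \ldots, a_n)$ of integers summing to $k$ and alternating-sum differentials, and then checking directly that its cohomology has the claimed shape by a short combinatorial argument. The tensor triangulated fully faithfulness of the embedding then follows at once from the monoidality of $j$.
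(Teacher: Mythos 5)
Your overall strategy coincides with the paper's: use the adjunction $\bL j \dashv \bR u$ (equivalently, reduce fully faithfulness to the unit being an equivalence), exploit that $(\D(\Ab)^\integers)^\perf$ is generated by the $\otimes$-invertible objects $\integers(r)$, and compute $\Hom^*_{\D(A^\bullet)}(A^\bullet,\integers(r)^\bullet)$ as the cohomology of the totalization of the cosimplicial abelian group $\integers(r)^\bullet$, i.e.\ the Amitsur/cobar complex $\integers \to A \to A^{\otimes 2} \to \cdots$ with the $\integers(r)$-twisted $d_0$. You also correctly flag the real issue: the classical computation of $\mathrm{Ext}$ in the abelian category of $A$-comodules is not the same thing as the $\Hom$ in $\D(A^\bullet)_\cart$, and cannot be invoked as a shortcut, since establishing their agreement on these objects is precisely what the lemma asserts.

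However, your description of the "explicit" replacement for this shortcut is wrong. The $n$-th term of the relevant complex is all of $A^{\otimes n}=\integers[z_1^{\pm 1},\dots,z_n^{\pm 1}]$, free on \emph{every} monomial $z_1^{a_1}\cdots z_n^{a_n}$, with no constraint on the exponents; there is no "weight-$k$ piece with generators the tuples summing to $k$." The twisted coface maps do not respect the total-exponent grading: $d_0^{\integers(r)}$ multiplies by $z_1^r$ and so shifts the sum by $r$, while an interior $d_i$ sends $z_i\mapsto z_iz_{i+1}$ and shifts the sum by $a_i$. Consequently the complex does not decompose into graded pieces and the combinatorics must be run on the full complex. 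The paper's actual argument does this by first passing to a normalized model of the cosimplicial abelian group (dividing $A^{\otimes n}$ by the images of $d_0,\dots,d_{n-1}$, so the surviving basis monomials $z_1^{e_1}\cdots z_n^{e_n}$ have $e_1\neq r$ and $e_j\neq e_{j+1}$ for all $j$), and then observing that a surviving monomial in degree $n$ is killed by the remaining differential exactly when $e_n=0$, i.e.\ exactly when it lies in the image from degree $n-1$. This gives acyclicity in positive cohomological degree, and the analysis of $\partial_1\colon A^0\to A^1/\im(d_0)$ (zero if $r=0$, injective otherwise) gives the correct $H^0$. You would need to replace your "tuples summing to $k$" sketch by an argument of this kind; as written, the final step would fail.
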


Before starting the proof we give an explicit
description of the cosimplicial algebra $A^\bullet$
and of the cosimplicial $A^\bullet$-module $M^\bullet$
corresponding to a representation of pure weight $r$
on a complex $M \in \Cpx(\Ab)$.

First we fix our conventions for simplicial
and cosimplicial objects.

For $n>0$ and $0 \le i \le n$ we denote by $d_i$
the unique strictly monotone map $[n-1] \to [n]$
which omits $i$ in the target.

Now let $A$ be any commutative Hopf algebra
(over some ground ring, which we omit in the notation)
with comultiplication $\bigtriangledown: A \to A \otimes A$
and associated cosimplicial algebra $A^\bullet$.

The cosimplicial maps $d_i^A:
A^{\otimes (n-1)} \to A^{\otimes n}$
are given as
\begin{equation}
d_i^A(a_1\otimes \cdots \otimes a_{n-1})= \left\{
\begin{array}{ll}
1 \otimes a_1 \otimes \cdots \otimes a_{n-1} & i=0 \\
a_1 \otimes \cdots \otimes a_{i-1} \otimes
\bigtriangledown (a_i) \otimes \cdots \otimes a_{n-1}
& 0 < i < n \\
a_1 \otimes \cdots \otimes a_{n-1} \otimes 1 & i=n
\end{array}
\right.
\end{equation}

Now let $c: M \to M \otimes A$ be a right $A$-comodule structure
on an object $M \in \Cpx(\Ab)$.
Then we get an associated $A^\bullet$-module $M^\bullet$ as
follows: We have $M^n = M \otimes A^{\otimes n}$
and the cosimplicial map $d_i^M: M^{n-1} \to M^n$
lying over $d_i^A$ is defined as
\begin{equation}
d_i^M(m \otimes a_1\otimes \cdots \otimes a_{n-1})= \left\{
\begin{array}{ll}
c(m) \otimes a_1 \otimes \cdots \otimes a_{n-1} \otimes m
& i=0 \\
m \otimes a_1 \otimes \cdots \otimes a_{i-1} \otimes
\bigtriangledown (a_i) \otimes \cdots \otimes  a_{n-1}
& 0 < i < n \\
m \otimes a_1 \otimes \cdots \otimes  a_{n-1} \otimes 1 & i=n
\end{array}
\right.
\end{equation}

Specializing to $A=\integers[z^{\pm 1}]$
we get the following description of $A^\bullet$:

\begin{itemize}
\item $A^n=\integers[z_1^{\pm 1}, \ldots, z_n^{\pm 1}]$,
\item $d_0^A(z_l)=z_{l+1}, l=1, \ldots, n-1$,
\item for $0 < i < n$:

$d_i^A(z_l)=z_l, l\le i-1$,

$d_i^A(z_i)=z_i z_{i+1}$,

$d_i^A(z_l)=z_{l+1}, i+1 \le l \le n-1$,
\item $d_n^A(z_l)=z_l, l=1, \ldots, n-1$.
\end{itemize}

The cosimplicial module
\begin{equation} \label{cosimp-weight-r}
\integers(r)^\bullet
\end{equation}
corresponding to the weight $r$ representation $\integers(r)$
has $M^n =A^n$, $d_0^{\integers(r)}(b)=d_0^A(b) \cdot z_0^r$,
all other coface maps are the same as for $A^\bullet$.

\begin{proof}[Proof of lemma \ref{Gm-perf-rep}]
The full subcategory of perfect objects of $\D(\Ab)^\integers$ is
is generated by objects
of the form $\integers(r)$, $r \in \integers$, hence to prove fully
faithfulness of $\bL j$ on perfect objects
it is sufficient to see that the map
\begin{equation} \label{hom-perf}
\Hom_{\D(\Ab)^\integers}(\integers, \integers(r)[k])
\to \Hom_{\D(A^\bullet)}(\integers, \integers(r)[k])
\end{equation}
is an isomorphism for $r,k \in \integers$. The left hand side of (\ref{hom-perf})
is $\integers$ if $r=k=0$, otherwise it is $0$.
For fixed $r$ the right hand side is computed
by the complex associated to the cosimplicial
abelian group $\integers(r)^\bullet$, see (\ref{cosimp-weight-r}).
We describe the corresponding normalized chain
complex obtained by dividing out the images of the
$d_i^{\integers(r)}$, $0 \le i \le n-1$, in $\integers(r)^n=A^n$
and using $\partial_n:=(-1)^n d_n^{\integers(r)}: 
A^{n-1}/\{\mathrm{images}\} \to A^n/\{\mathrm{images}\}$
as differential. The differentials $d_i$ respect the
direct sum decompositions of the $A^n$ by
the monomials in the $z_i$, $i=1, \ldots, n$.
Let $n > 0$. A monomial $z_1^{e_1} \cdots z_n^{e_n} \in A^n$
lies in the image of one of the $d_i$, $i=0, \ldots, n-1$,
if and only if $e_1=r$ or $e_j=e_{j+1}$ for some
$1 \le j \le n-1$. Suppose a non-zero monomial 
$z_1^{e_1} \cdots z_n^{e_n} \in A^n/\{\mathrm{images}\}$
is mapped to $0$ by the differential $\partial_{n+1}$.
This is the case if and only if $e_n=0$, i.e.
if it is in the image of the differential
$\partial_n$. So the cohomology of the
normalized chain complex of $\integers(r)^\bullet$
is $0$ in positive degrees.
The differential $\partial_1: A^0 \to A^1/ \im(d_0)$
is $0$ if $r=0$ and injective otherwise,
hence we see that the map (\ref{hom-perf})
is indeed an isomorphism.
\end{proof}

\begin{proposition} \label{graded-full-embed}
The functor $\bL j$ is a full embedding.
\end{proposition}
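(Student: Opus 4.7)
The plan is to deduce Proposition~\ref{graded-full-embed} from the already-established Lemma~\ref{Gm-perf-rep} by a standard compact generation argument using the adjunction $\bL j \dashv \bR u$. Full faithfulness of $\bL j$ is equivalent to the unit $\eta \colon \id \to \bR u \bL j$ being a natural isomorphism, so it suffices to show that the full subcategory
\[
\caE = \{\, Y \in \D(\Ab)^\integers : \eta_Y \text{ is an isomorphism}\,\}
\]
coincides with $\D(\Ab)^\integers$.

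First, I would observe that $\caE$ is a triangulated subcategory, since both $\bL j$ and $\bR u$ are triangulated functors. Next, $\caE$ contains every perfect $Y$: for any perfect $X$, Lemma~\ref{Gm-perf-rep} supplies an isomorphism $\Hom(X,Y) \xrightarrow{\sim} \Hom(\bL j X, \bL j Y) \cong \Hom(X, \bR u \bL j Y)$ induced by $\eta_Y$, and since the $\integers(r)$ are themselves perfect and compactly generate $\D(\Ab)^\integers$ (hence detect isomorphisms), this forces $\eta_Y$ to be an isomorphism whenever $Y$ is perfect.

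The heart of the argument is then to show that $\caE$ is closed under arbitrary direct sums; once this is known, $\caE$ is a localizing subcategory of the compactly generated triangulated category $\D(\Ab)^\integers$ containing the compact generators $\integers(r)$, and therefore equals the whole category. As $\bL j$ is a left adjoint it preserves direct sums automatically, so the issue reduces to showing that $\bR u$ does as well. By the usual criterion, this follows if $\bL j$ sends a set of compact generators to compact objects of $\D(A^\bullet)_\cart$; and Lemma~\ref{Gm-perf-rep} identifies $\bL j(\integers(r))$ with an object of $\Perf(A^\bullet)$.

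I expect the main obstacle to be exactly this last point: verifying that $\bL j(\integers(r))$ is compact in $\D(A^\bullet)_\cart$, equivalently that $\bR u$ commutes with direct sums. Everything else is formal triangulated bookkeeping, but the compactness requires unwinding $\bR u$ in terms of the cosimplicial totalization that already appears in the proof of Lemma~\ref{Gm-perf-rep} and checking that this totalization commutes with direct sums in each Adams degree, which should follow from the level-wise description of colimits in $\D(A^\bullet)$ together with the boundedness estimates developed earlier in the paper.
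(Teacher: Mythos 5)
Your abstract framework (reformulate full faithfulness as the unit $\eta$ being invertible, show the subcategory $\caE$ where this holds is localizing and contains the compact generators $\integers(r)$) is a sound strategy and it is genuinely different in packaging from the paper's. The paper does not set up $\caE$ at all; it instead observes that $\D(\Ab)^\integers$ is generated by direct sums $M=\bigoplus_i\integers(k_i)[l_i]$ and then directly proves that the single comparison map $\bigoplus_i\Hom_{\D(A^\bullet)}(\integers,\integers(k_i)[l_i])\to\Hom_{\D(A^\bullet)}(\integers,\bigoplus_i\integers(k_i)[l_i])$ is an isomorphism, by decomposing $\Tot(M^\bullet)$ as a product $\prod_n\Tot(M_n^\bullet)$ indexed by the cohomological degree of the summands and then invoking exactness of products in $\Ab$ together with the explicit computation already done in the proof of Lemma~\ref{Gm-perf-rep}.

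The gap in your proposal is precisely the step you yourself flag as the main obstacle, and the fix you sketch for it does not work. You claim that compactness of $\bL j(\integers(r))$ in $\D(A^\bullet)_\cart$ ``should follow from \dots the boundedness estimates developed earlier in the paper,'' but those estimates (Lemmas~\ref{hgfsgh}, \ref{coh-connected}, \ref{main-tot}) are tailored to cosimplicial algebras of bounded Tate type, where one has uniform cohomological lower bounds in each Adams degree. The cosimplicial algebra $A^\bullet$ of $\bG_m$ is not of Tate type at all (it is concentrated in Adams degree $0$ with $A^1=\integers[z^{\pm 1}]$), and the objects being direct-summed, $\integers(k_i)[l_i]$, carry unbounded shifts $l_i$, so no boundedness hypothesis is available. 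The actual mechanism that makes the totalization commute with these direct sums is the observation, used in the paper, that in each total degree the infinite product ranges over cohomological degree while the direct sum ranges over summands of a \emph{fixed} cohomological degree; hence $\Tot(M^\bullet)\cong\prod_n\Tot(M_n^\bullet)$ with each factor consisting only of direct sums, and the product can be pushed through cohomology because products are exact in $\Ab$. Also note that quoting Lemma~\ref{Gm-perf-rep} to say ``$\bL j(\integers(r))$ lands in $\Perf(A^\bullet)$'' does not help: the paper's $\Perf(A^\bullet)$ means levelwise perfect, which is not a priori the same as compact in $\D(A^\bullet)_\cart$. In short, the reduction to compactness is correct, but the compactness itself is exactly where the substantive computation lives, and your proposal leaves it unproved while pointing to the wrong tools.
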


\begin{proof}
By lemma \ref{Gm-perf-rep} we know that
$\bL j$ is an embedding on perfect objects.
Since $\D(\Ab)^\integers$ is generated as
triangulated category by objects
$M=\bigoplus_{i \in I} \integers(k_i)[l_i]$
we are finished if
\begin{equation} \label{comm-sum-equn}
\bigoplus_{i \in I} \Hom_{\D(A^\bullet)} (\integers,
\integers(k_i)[l_i]) \to \Hom_{\D(A^\bullet)} (\integers,
\bigoplus_{i \in I} \integers(k_i)[l_i])
\end{equation}
is an isomorphism.
We denote the $A^\bullet$-module corresponding to $M$
by $M^\bullet$.
By theorem \ref{Gm-perf-rep} the left hand side
of \ref{comm-sum-equn} is
\begin{equation} \label{hom-lhs-Gm}
\bigoplus_{i \in I, k_i=l_i=0} \integers.
\end{equation}
The right hand side of (\ref{comm-sum-equn})
is given as $H_0$ of the (derived) total object
$\Tot(M^\bullet)$ of the cosimplicial complex $M^\bullet$.
It is known that the total object is given as
the total complex of the double complex
associated to $M^\bullet$, where for the
value of the total complex in cohomological degree $n$
one has to use the {\em product} $\prod_{p+q=n} M^{p,q}$.
(One can also use the normalized associated complex.)
We denote this total complex by $\Tot(M^\bullet)$.
The only differential in this total complex comes
from the cosimplicial structure. Hence
$\Tot(M^\bullet)$ splits as
a product of complexes $\prod_{n \in \integers}
\Tot(M_n^\bullet)$, where
$M_n^\bullet$ is the $A^\bullet$-module
corresponding to $\bigoplus_{i \in I, l_i=n}
\integers(k_i)[l_i]$.
By exactness of products in $\Ab$ we get
that the right hand side of (\ref{comm-sum-equn})
is
\begin{equation} \label{hom-rhs-Gm}
\prod_{n \in \integers} H_0(\Tot(M_n^\bullet))=
H_0(\Tot(M_0^\bullet))=\bigoplus_{i \in I, k_i=l_i=0} \integers
\text{,}
\end{equation}
and it is easy to see that the map (\ref{comm-sum-equn})
is the identity on the identifications
(\ref{hom-lhs-Gm}) and (\ref{hom-rhs-Gm}).
\end{proof}

\begin{lemma} \label{ess-surj-bounded}
Every homotopy cartesian $A^\bullet$-module $M^\bullet$ such that the cohomology
of $M^0$ is bounded lies in the essential image of $\bL j$.
\end{lemma}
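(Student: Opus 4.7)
The plan is to induct on the cohomological amplitude of $M^0$, taking crucial advantage of the fact that $A=\integers[z^{\pm 1}]$ is free, hence flat, as a $\integers$-module. Since $M^\bullet$ is homotopy cartesian we have $M^n \simeq M^0 \otimes^{\bL} A^{\otimes n} = M^0 \otimes A^{\otimes n}$, and flatness yields $H^k(M^n) \cong H^k(M^0) \otimes A^{\otimes n}$, so the bounded amplitude of $M^0$ propagates uniformly to every level of $M^\bullet$.

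For the base case I would assume $H^\ast(M^0)$ is concentrated in a single cohomological degree, which after shifting may be taken to be degree $0$; write $H := H^0(M^0)$. Each $M^n$ is then equivalent in $\D(\Ab)$ to the ordinary abelian group $H \otimes A^{\otimes n}$ placed in degree $0$. Since mapping spaces in $\D(\Ab)$ between complexes concentrated in a single cohomological degree reduce to ordinary $\Hom$-groups, the cosimplicial diagram $M^\bullet$ in $\D(\Ab)$ descends canonically to a strict cosimplicial abelian group $[n] \mapsto H \otimes A^{\otimes n}$. The homotopy cartesian condition becomes strict cartesianness, which is equivalent to an $A$-comodule structure on $H$ and hence to a weight decomposition $H = \bigoplus_{r \in \integers} H(r)$. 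Comparing with the explicit description in (\ref{cosimp-weight-r}), this identifies $M^\bullet$ with $\bL j$ applied to the Adams graded object $(H(r))_r$.

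For the inductive step, assume $H^\ast(M^0)$ is supported in $[a,b]$ with $b>a$ and apply the good truncation at degree $a$ to $M^0$. Tensoring levelwise with $A^{\otimes n}$ on a cofibrant model of $M^\bullet$ yields a cofiber sequence $N^\bullet \to M^\bullet \to Q^\bullet$ in $\D(A^\bullet)$; by flatness of $A$ the endpoints are again homotopy cartesian, with $N^0 \simeq \tau_{\leq a} M^0$ and $Q^0 \simeq \tau_{>a} M^0$ of strictly smaller amplitude. By the inductive hypothesis $N^\bullet$ and $Q^\bullet$ lie in the essential image of $\bL j$. Since $\bL j$ is fully faithful by proposition (\ref{graded-full-embed}), its essential image is a triangulated subcategory, closed in particular under cofibers, so $M^\bullet$ lies there as well.

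The main obstacle I expect is carrying out the truncation cofiber sequence cleanly at the cosimplicial level while preserving the homotopy cartesian property: one must work with a suitable strict or cofibrant model of $M^\bullet$ so that levelwise good truncations genuinely glue to cartesian $A^\bullet$-modules. A secondary point that deserves spelling out is the descent argument in the base case, namely that for an object of $\D(A^\bullet)_\cart$ whose levels all lie in a single cohomological degree, the derived cartesian condition upgrades to a strict $A$-comodule structure on $H$ with no higher coherence obstructions; this should follow from the mapping space collapse mentioned above, but it is the conceptual heart of the argument.
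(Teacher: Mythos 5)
Your proposal is correct and follows essentially the same route as the paper: normalize so that the cohomology of $M^0$ starts at degree $0$, split off the bottom cohomology as a strictly cartesian cosimplicial abelian group (equivalently an $A$-comodule, hence a weight decomposition in the image of $\bL j$), and induct on the cofiber using that the essential image of the fully faithful $\bL j$ is a triangulated subcategory. The paper realizes the base-case object concretely as the cycles $Z^n = \ker(M^{n,0} \to M^{n,1})$ of a strict non-negatively graded replacement of $M^\bullet$, which sidesteps the "strictification" point you flag as delicate; your explicit use of flatness of $A$ to control the amplitude of all levels of $M^\bullet$ is the same fact the paper uses implicitly.
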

\begin{proof}

Without loss of generality we can assume
that $H^i(M^0)=0$ for $i<0$ and if
$M^0$ is non-contractible that $H^0(M^0) \neq 0$.
Let $l$ be the largest integer such that $H^l(M^0)
\neq 0$ if $M^0$ is non-contractible, otherwise
we set $l=-1$.
We prove the statement by induction on $l$.

The start of the induction is $l=-1$ where we have nothing
to prove. So $l \ge 0$.
By replacing $M^{n,0}$ by
$M^{n,0}/\mathrm{image}(M^{n,-1})$
and setting $M^{n,i}=0$ for $i<0$
we can assume that the complexes $M^n$ sit purely
in cohomologically non-negative degrees.
The cycles $Z^n=\ker(M^{n,0} \to M^{n,1})$
equal the cohomology $H^0(M^n)$ and
form a strictly cartesian $A^\bullet$-module.
They thus form a direct sum
of representations of $\bG_m$ of the form
$M_r(r)$ for abelian groups $M_r$. Hence $Z^\bullet$ lies in the
image of $\bL j$. Applying the induction hypothesis
to the (shifted) cofiber of $Z^\bullet \to M^\bullet$
we see that $M^\bullet$ itself is a cofiber
of objects from the image of $\bL j$.
Using fully faithfulness (proposition (\ref{graded-full-embed}))
yields the induction step.
\end{proof}

\begin{proposition} \label{ess-surj}
The functor $\bL j \colon \D(\Ab)^\integers \to \D(A^\bullet)_\cart$ is essentially surjective.
\end{proposition}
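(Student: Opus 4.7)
The plan is to prove the statement by showing that the right adjoint $\bR u$ is conservative on $\D(A^\bullet)_\cart$; combined with the full faithfulness of $\bL j$ from proposition (\ref{graded-full-embed}), this gives essential surjectivity via the standard triangle identity. Indeed, when $\bL j$ is fully faithful the unit $\id \to \bR u \bL j$ is an isomorphism, and the triangle identity $\bR u(\epsilon_X) \circ \eta_{\bR u X} = \id$ then forces $\bR u(\epsilon_X)$ to be an isomorphism for every $X$; conservativity of $\bR u$ upgrades this to $\epsilon_X$ itself being an isomorphism in $\D(A^\bullet)_\cart$.

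To establish conservativity I would equip both categories with natural t-structures. On $\D(\Ab)^\integers$ the t-structure is the obvious weight-wise one; on $\D(A^\bullet)_\cart$ it comes from levelwise good truncation, which preserves homotopy cartesianness because each $A^k = \integers[z_1^{\pm 1}, \ldots, z_k^{\pm 1}]$ is flat over $A^0 = \integers$, so truncation commutes with $(-) \otimes_\integers A^k$. By direct inspection $\bL j$ is t-exact (the underlying complex of $\bL j(K)$ is $\bigoplus_r K(r)$), and a standard adjunction argument based on the vanishing $\Hom_\D(\D^{\le 0}, \D^{\ge 1}) = 0$ from the t-structure axiom then forces $\bR u$ to be t-exact in both directions.

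The hearts are $\Ab^\integers$ and the category of $A$-comodules in $\Ab$; the classical weight decomposition of $\G$-representations identifies them, and this identification agrees with the restriction of $j$ to the hearts. Hence the restriction of $\bR u$ to the hearts is the inverse equivalence, extracting weight components. Combined with t-exactness of $\bR u$, this yields, for every $X \in \D(A^\bullet)_\cart$ and all $i, r \in \integers$, the formula $H^i(\bR u X(r)) = H^i(X^0)(r)$, where $H^i(X^0)(r)$ denotes the weight-$r$ part of the strict $A$-comodule $H^i(X^0)$ (strictness of the $A$-comodule structure on cohomology follows from K\"unneth and $\integers$-flatness of each $A^k$). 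Conservativity is then immediate: $\bR u X \simeq 0$ forces $H^i(X^0)(r) = 0$ for all $i, r$, hence $H^i(X^0) = 0$ for all $i$, so $X^0 \simeq 0$, and finally $X \simeq 0$ since $X^k \simeq X^0 \otimes_\integers^\bL A^k$ by homotopy cartesianness.

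The main obstacle I anticipate is the clean verification of t-exactness of $\bR u$ in both directions along with its compatibility with the weight decomposition on the hearts. The abstract adjunction argument for t-exactness works uniformly once the level-$0$ t-structure on $\D(A^\bullet)_\cart$ is confirmed well-defined, and both this and the heart equivalence ultimately rest on flatness of each $A^k$ over $\integers$.
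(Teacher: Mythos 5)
Your overall strategy is a legitimate alternative to the paper's: if $\bL j$ is fully faithful (proposition (\ref{graded-full-embed})) and $\bR u$ is conservative, the triangle identities do force the counit to be an isomorphism, hence essential surjectivity. The paper instead argues directly: it splits an arbitrary cartesian $M^\bullet$ via levelwise good truncation into $\tau^{\le 0}M^\bullet$ and $\tau^{\ge 1}M^\bullet$, proves the cohomologically bounded case by induction on the amplitude (lemma (\ref{ess-surj-bounded})), and handles the two unbounded halves via $\holim$ and $\hocolim$ respectively. So the two routes are genuinely different in structure.

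However, there is a real gap in your argument at the step asserting that $\bR u$ is t-exact ``in both directions'' by a ``standard adjunction argument.'' The standard argument only gives one direction: since $\bL j$ is right t-exact (preserves $\D^{\le 0}$), its right adjoint $\bR u$ is \emph{left} t-exact (preserves $\D^{\ge 0}$). Nothing formal gives right t-exactness of $\bR u$. Concretely, right t-exactness says that $H^i(\bR u X) = 0$ for $i>0$ whenever $H^i(X^0)=0$ for $i>0$, i.e.\ that the ``derived invariants'' functor has no higher cohomology on cartesian objects living in a single degree; this is a substantive vanishing statement about $\G$ (it is essentially the computation done by hand in the proof of lemma (\ref{Gm-perf-rep}) that the normalized cochain complex of $\integers(r)^\bullet$ has no positive cohomology, but now needed for all cartesian heart objects, not just $\integers(r)$). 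Without right t-exactness you cannot identify $H^i(\bR u X)$ with the weight decomposition of $H^i(X^0)$, and hence conservativity of $\bR u$ does not follow. To prove the needed vanishing, the natural route is an induction on cohomological amplitude using the strictly cartesian cycles of $M^\bullet$, which is exactly lemma (\ref{ess-surj-bounded}); and once you have that, even with the bounded case in hand one must still control unbounded objects (the paper's limit/colimit argument), since the exactness of $\bR u$ on bounded objects does not automatically pass to unbounded ones. So the proposal as written relies on an unsupported t-exactness claim that is, in effect, a restatement of the hard part of the proof.

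A secondary point worth flagging: the existence of the levelwise t-structure on $\D(A^\bullet)_\cart$ (the Hom-vanishing axiom $\Hom(\D^{\le 0},\D^{\ge 1})=0$) is plausible via a descent spectral sequence but is not immediate from flatness of the $A^k$ alone, and would need an argument; flatness only gives you that good truncation preserves cartesianness.
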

\begin{proof}
Let $M^\bullet$ be a homotopy cartesian $A^\bullet$-module. We will apply
levelwise good truncation functors to $M^\bullet$. So for $S \in \{\le a, \ge a, [a,b]\}$
denote by $\tau^S(M^\bullet)$ the good truncation such that the cohomology is preserved
in the indicated region and is $0$ otherwise. We have an exact triangle
$$\tau^{\le 0}(M^\bullet) \to M^\bullet \to \tau^{\ge 1}(M^\bullet) \to \tau^{\le 0}(M^\bullet)[1]$$
in $\D(A^\bullet)_\cart$. To prove that $M^\bullet$ is in the essential image of
$\bL j$ it is thus sufficient by proposition (\ref{graded-full-embed}) that
$\tau^{\le 0}(M^\bullet)$ and $\tau^{\ge 1}(M^\bullet)$ are in the essential image of $\bL j$.

Let us do first the case of $\tau^{\le 0}(M^\bullet)$:
We can write $$\tau^{\le 0}(M^\bullet) \simeq \holim_{a \to - \infty} \tau^{[a,0]}(M^\bullet).$$
Now the functor $\bR u \colon \D(A^\bullet) \to \D(\Ab)^\integers$ preserves homotopy limits,
so we have 
$$\bR u(\tau^{\le 0}(M^\bullet)) \simeq \holim_{a \to - \infty} \bR u(\tau^{[a,0]}(M^\bullet)).$$
By lemma (\ref{ess-surj-bounded}) the $i$-th cohomology of $\bR u(\tau^{\le 0}(M^\bullet))$ is
thus given by $H^i(M^\bullet)$ for $i \le 0$ (viewing the $\mathbb{G}_m$-representation $H^i(M^\bullet)$
as a graded object).
The commutativity of the diagram
$$\xymatrix{\bL j \bR u (\tau^{\le 0}(M^\bullet)) \ar[r] \ar[d] & \tau^{\le 0}(M^\bullet) \ar[d] \\
\bL j \bR u (\tau^{[a,0]}(M^\bullet)) \ar[r] & \tau^{[a,0]}(M^\bullet)}$$
now shows that the upper horizontal map is an equivalence.

We turn to showing that $\tau^{\ge 1}(M^\bullet)$ lies in the essential image of $\bL j$.
We can write $$\tau^{\ge 1}(M^\bullet) \simeq \hocolim_{a \to \infty} \tau^{[1,a]}(M^\bullet).$$ 
Let $X_a := \bR u (\tau^{[1,a]}(M^\bullet))$ and set $X := \hocolim_{a \to \infty} X_a$.
Then $$\bL j (X)=\hocolim_{a \to \infty} \bL j (X_a)$$ and we have a canonical map
$\bL j (X) \to \tau^{\ge 1}(M^\bullet)$. By lemma (\ref{ess-surj-bounded}) this is
an isomorphism on cohomology hence an equivalence. This finishes the proof of
the essential surjectivity of $\bL j$.
\end{proof}
\begin{theorem} \label{j-equiv}
The functor $\bL j \colon \D(\Ab)^\integers \to \D(A^\bullet)_\cart$ is an equivalence
of tensor triangulated categories.
\end{theorem}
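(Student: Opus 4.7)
The plan is to simply assemble the statement from the two main propositions that have already been proved in this section, together with a check that the tensor structure is respected.

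First I would observe that Proposition \ref{graded-full-embed} shows $\bL j$ is fully faithful on all of $\D(\Ab)^\integers$, and Proposition \ref{ess-surj} shows it is essentially surjective onto $\D(A^\bullet)_\cart$. Any fully faithful and essentially surjective functor between triangulated categories is an equivalence, so the purely triangulated part of the theorem is immediate. I would also note that both $\bL j$ and its inverse are automatically exact, since $\bL j$ is a left Quillen functor restricted to a subcategory closed under cofibers (the cartesian subcategory being stable), so no extra check is needed on the triangle structure.

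The remaining point is the tensor structure. It was already recorded in the discussion preceding Lemma \ref{Gm-perf-rep} that $j = \iota^{\str} \circ j_1$ is a tensor functor at the level of complexes: indeed $j_1$ turns direct sums of Adams graded pieces into the corresponding weight decomposition of comodules, and both the comodule tensor product in $\Rep^{\str}(G)$ and the levelwise tensor product in $\Mod(A^\bullet)$ are computed by the underlying tensor product in $\Cpx(\Ab)$. To pass to the derived level I would argue that $\bL j$ preserves tensor products on the generators $\integers(r)[k]$, since on these classes $j$ applied to $\integers(r)[k] \otimes \integers(s)[l] = \integers(r+s)[k+l]$ agrees with the levelwise tensor of $\integers(r)^\bullet[k]$ and $\integers(s)^\bullet[l]$ (all of which are already cofibrant in the relevant Adams degree), and then invoke Proposition \ref{graded-full-embed} together with compatibility with (homotopy) colimits to extend this isomorphism to arbitrary objects.

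The main obstacle, such as it is, is to be careful about what ``tensor triangulated equivalence'' means here: $\D(A^\bullet)_\cart$ inherits its symmetric monoidal structure from $\Mod(A^\bullet)$, and one must check that the derived tensor product of two cartesian modules remains cartesian and agrees with the one transported through $\bL j$. Since both categories are compactly generated (by Adams shifts of $\integers$ on the source and by their images $\integers(r)^\bullet$ on the target), and since the equivalence $\bL j$ identifies these generating sets compatibly with their symmetric monoidal structure, the comparison at the level of generators extends to a natural monoidal isomorphism, which completes the proof.
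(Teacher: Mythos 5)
Your proof is correct and takes the same route as the paper: the theorem is obtained by combining Proposition \ref{graded-full-embed} (full faithfulness) with Proposition \ref{ess-surj} (essential surjectivity). The paper's own proof is exactly this one-line combination, leaving the monoidal compatibility implicit from the earlier remark that $j$ is a tensor functor, so your additional discussion of the tensor structure is just a more explicit rendering of the same argument.
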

\begin{proof}
This combines propositions (\ref{graded-full-embed}) and (\ref{ess-surj}).
\end{proof}

\begin{corollary} \label{j-R-equiv}
Let $R$ be a commutative $\S$-algebra in $\Cpx(\Ab)^\integers$. Let $R'$ be its
image in $\Mod(A^\bullet)$ under $j$. Then the induced functor
$\D(R) \to \D(R')_\cart$ is an equivalence
of tensor triangulated categories.
\end{corollary}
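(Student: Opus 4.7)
My plan is to deduce this corollary formally from Theorem (\ref{j-equiv}) by exploiting the fact that $\bL j$ is not just an equivalence of triangulated categories but of \emph{tensor} triangulated categories. The central observation is that a symmetric monoidal equivalence automatically lifts to equivalences on module categories over commutative algebras, so the work reduces to identifying the target category $\D(R')_\cart$ correctly and checking that the cartesian condition is compatible with the $R'$-action.

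\textbf{Step 1 (setup).} The functor $j \colon \Cpx(\Ab)^\integers \to \Mod(A^\bullet)$ is strong symmetric monoidal, so $R' = j(R)$ is naturally a commutative $\S$-algebra in $\Mod(A^\bullet)$, and $j$ lifts to a functor $\Mod(R) \to \Mod(R')$. After deriving, this gives a functor $\bL j_R \colon \D(R) \to \D(R')$ which is compatible with restriction to underlying complexes. Since the underlying $A^\bullet$-module of $\bL j_R(M)$ is just $\bL j$ applied to the underlying complex of $M$, and the latter is cartesian by Theorem (\ref{j-equiv}), we see that $\bL j_R$ factors through $\D(R')_\cart$.

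\textbf{Step 2 (fully faithfulness).} For $M, N \in \D(R)$, the mapping spectrum $\Hom_{\D(R)}(M,N)$ can be built from mapping spectra of underlying complexes $\Hom_{\D(\Ab)^\integers}$ via the standard (co)simplicial bar construction encoding the $R$-linearity condition (a totalization of a diagram whose terms are $\Hom_{\D(\Ab)^\integers}(R^{\otimes n} \otimes^\bL M, N)$). The analogous totalization computes $\Hom_{\D(R')_\cart}(\bL j_R(M), \bL j_R(N))$ starting from $\Hom$'s in $\D(A^\bullet)_\cart$, and Theorem (\ref{j-equiv}) identifies each term of the cosimplicial diagram on the nose (using that $\bL j$ is a tensor equivalence so $\bL j(R^{\otimes n} \otimes^\bL M) \simeq (R')^{\otimes n} \otimes^\bL \bL j(M)$). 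Taking totalizations yields the desired isomorphism.

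\textbf{Step 3 (essential surjectivity).} Given $N \in \D(R')_\cart$, its underlying $A^\bullet$-module is cartesian, hence by Theorem (\ref{j-equiv}) there exists $M_0 \in \D(\Ab)^\integers$ and an equivalence $\bL j(M_0) \simeq N$ of underlying $A^\bullet$-modules. The $R'$-action map $R' \otimes^\bL N \to N$ in $\D(A^\bullet)_\cart$ corresponds, via fully faithfulness of $\bL j$ together with the identification $R' \otimes^\bL \bL j(M_0) \simeq \bL j(R \otimes^\bL M_0)$, to a unique morphism $R \otimes^\bL M_0 \to M_0$ in $\D(\Ab)^\integers$; the associativity and unit axioms transport across the tensor equivalence, so this makes $M_0$ into an object of $\D(R)$ with $\bL j_R(M_0) \simeq N$. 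The main subtlety — and thus the main obstacle — is precisely this transport of algebraic structure across the equivalence: to be fully rigorous one should work at the level of an underlying $\infty$-category (or use the model-categorical enhancement of $j$), so that the associativity and unit diagrams commute coherently rather than merely up to homotopy in the triangulated categories. Once that formalism is in place, both fully faithfulness and essential surjectivity above become automatic, and the tensor-triangulated structure of the equivalence follows from the symmetric monoidality of $\bL j_R$ together with the fact that $\D(R')_\cart$ inherits its tensor structure from $\D(R')$ via the cartesian inclusion.
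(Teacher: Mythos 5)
The paper's proof is much shorter and runs along a different route. It uses that the adjunction $\D(R) \rightleftarrows \D(R')_\cart$ already exists at the model-categorical level (the lift of $j$ to $R$-modules is left Quillen), that both adjoints commute with the forgetful functors down to $\D(\Ab)^\integers$ and $\D(A^\bullet)_\cart$, and that these forgetful functors are conservative. Theorem (\ref{j-equiv}) then forces the unit and counit of the $R$-linear adjunction to be isomorphisms, with no separate check of fully faithfulness or essential surjectivity and no transport of algebraic structure.

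Your Step 3 has a genuine gap, which you flag yourself. Producing an action map $R \otimes^\bL M_0 \to M_0$ in $\D(\Ab)^\integers$ whose associativity and unit diagrams commute in the homotopy category is not enough to make $M_0$ an object of $\D(R)$: a module over an $\S$-algebra is a coherent system of higher homotopies, and a triangulated equivalence does not transport such structure. Your proposed fix --- pass to an underlying symmetric monoidal $\infty$-category (or model-categorical enhancement) where $\bL j$ is an equivalence --- is the right idea, but it is not a repair of the argument you wrote; it replaces it. Once you have a symmetric monoidal $\infty$-equivalence, the equivalence of module categories over corresponding commutative algebra objects is a formal corollary, and both your bar-resolution Step 2 and your structure-transport Step 3 become superfluous. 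The paper's argument sidesteps all of this: the adjoint pair $\bL j_R \dashv \bR u_R$ on module categories is already present (no transport required), both functors commute with forgetting the module structure, and conservativity of the forgetful functors together with Theorem (\ref{j-equiv}) is all one needs to conclude that unit and counit are isomorphisms. You in fact assemble all of these ingredients in your Step 1, so the streamlined argument was within reach.
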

\begin{proof}
This follows from theorem (\ref{j-equiv}) and the fact that the
adjoint functors $\D(R) \rightleftarrows \D(R')_\cart$ commute with forgetting
the $R$- resp. $R'$-module structure.
\end{proof}

\section{Transfer argument} \label{transfer}

We keep the notation from section (\ref{Gm-rep}),
so $A^\bullet$ denotes the cosimplicial algebra
corresponding to $\bG_m$.
We consider a cosimplicial commutative $\S$-algebra $B^\bullet$
in $\Cpx(\Ab)^\integers$ and denote $C^\bullet=j(B^\bullet)$,
i.e. $C^\bullet$ is a cosimplicial algebra in $\Mod(A^\bullet)$.
In particular $C^\bullet$ is a bicosimplicial commutative $\S$-algebra in $\Cpx(\Ab)$.

There is a functor $j_B: \Mod(B^\bullet) \to \Mod(C^\bullet)$
which has a left derived functor $\bL j_B$.
Both $j_B$ and $\bL j_B$ are tensor functors.

As $j$ the functor $j_B$ has a right adjoint,
which we denote by $u_B$.
Using model structure one sees again that
$u_B$ has a right derived functor
$\bR u_B$.

We denote by $\D(C^\bullet)_\cart$ the homotopy category of $C^\bullet$-modules
which are homotopy cartesian as bicosimplicial modules.

\begin{corollary} \label{transfer-corollary}
The natural functor $\bL j_B \colon \D(B^\bullet)_\cart \to \D(C^\bullet)_\cart$ is an equivalence
of tensor triangulated categories.
\end{corollary}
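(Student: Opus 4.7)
The plan is to reduce the corollary to Corollary \ref{j-R-equiv} applied at each cosimplicial degree of $B^\bullet$, combined with the diagonal cofinality from Corollary \ref{corollary:diag-equiv}.

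First I would reinterpret $\D(C^\bullet)_\cart$ in terms of sections of diagrams of model categories. The bicosimplicial algebra $C^\bullet$ gives rise to a pseudo-functor $\caC\colon \bigtriangleup \times \bigtriangleup \to \ModCat$, $([n],[m]) \mapsto \Mod(C^{n,m})$, and a $C^\bullet$-module is the same as a section of $\caC$. Since every morphism in $\bigtriangleup \times \bigtriangleup$ factors through the two coordinate directions, being homotopy cartesian over $\bigtriangleup \times \bigtriangleup$ is equivalent to being cartesian in each of the two cosimplicial variables separately. Hence $\D(C^\bullet)_\cart = \Ho\Sect(\caC)_\cart$, and this category may equivalently be described as sections $[n] \mapsto M^{n,\bullet}$ of the $\bigtriangleup$-diagram $[n] \mapsto \Mod(C^{n,\bullet})$, valued in objects cartesian in the $A^\bullet$-direction, and themselves cartesian in $[n]$.

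Second I apply Corollary \ref{j-R-equiv} at each $n$: the functor $\bL j_{B^n}\colon \D(B^n) \xrightarrow{\sim} \D(C^{n,\bullet})_\cart$ is a tensor equivalence. Since $j$ is a symmetric monoidal left Quillen functor (Section \ref{Gm-rep}) compatible with extension of scalars along the structure maps $B^n \to B^m$, these pointwise equivalences assemble into an equivalence between the $\bigtriangleup$-diagram $[n] \mapsto \D(B^n)$ and the $\bigtriangleup$-diagram $[n] \mapsto \D(C^{n,\bullet})_\cart$. Passing to sections and restricting to those cartesian in $[n]$ yields
\[
\D(B^\bullet)_\cart \;\simeq\; \D(C^\bullet)_\cart,
\]
and the resulting equivalence is visibly $\bL j_B$, which is tensor-compatible because $j$ is monoidal.

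The main obstacle is the model-categorical bookkeeping: promoting the pointwise equivalence of Corollary \ref{j-R-equiv} to an equivalence of $\bigtriangleup$-diagrams of model categories that descends to section categories and further to cartesian sections. An arguably cleaner alternative is to apply Corollary \ref{corollary:diag-equiv} to $\mathrm{diag}\colon \bigtriangleup \to \bigtriangleup \times \bigtriangleup$, reducing cartesian bicosimplicial $C^\bullet$-modules to cartesian diagonal cosimplicial modules, and then to finish with the pointwise Corollary \ref{j-R-equiv}; both routes converge on the same equivalence.
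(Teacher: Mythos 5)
Your main route is essentially the paper's own proof. The paper's one-line argument is that the adjoint functors $\bL j_B \dashv \bR u_B$ commute with the jointly conservative restriction functors to a fixed cosimplicial level $n$, so the unit and counit of the adjunction can be tested levelwise, where they become the unit and counit of the adjunction handled by Corollary \ref{j-R-equiv}. Your diagram-of-model-categories packaging expresses the same idea, and your observation that cartesianness over $\bigtriangleup \times \bigtriangleup$ is equivalent to cartesianness in each coordinate separately is correct and is precisely what makes the levelwise test sufficient. The paper's phrasing via the adjunction sidesteps the bookkeeping you flag as the main obstacle: testing the unit and counit after restriction never requires promoting the pointwise equivalences to an equivalence of diagrams of model categories, so that step can be avoided entirely.

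The alternative route you append, though, does not close the argument. Corollary \ref{corollary:diag-equiv} gives $\D(C^\bullet)_\cart \simeq \D(D^\bullet)_\cart$, which is exactly Proposition \ref{proposition:diag-equiv}; the paper uses that proposition \emph{together with} the present corollary to deduce Theorem \ref{main-transfer-thm}. After passing to the diagonal you would still need to identify $\D(B^\bullet)_\cart$ with $\D(D^\bullet)_\cart$, and ``pointwise Corollary \ref{j-R-equiv}'' is then unavailable: at diagonal degree $n$ you only see the single algebra $D^n = C^{n,n}$, whereas Corollary \ref{j-R-equiv} compares $\D(B^n)$ against cartesian modules over the \emph{entire} cosimplicial $A^\bullet$-algebra $C^{n,\bullet}=j(B^n)$. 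Collapsing to the diagonal mixes the $\bG_m$-direction with the $B^\bullet$-direction, so the input for Corollary \ref{j-R-equiv} is no longer present. The two routes therefore do not converge; keep the first and drop the second.
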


\begin{proof}
This follows from corollary (\ref{j-R-equiv}) and the fact that the adjoint functors
in question commute with the functors restricting a cosimplicial module to one level.
\end{proof}
\medskip

Recall the maps $\alpha_i \colon [1] \to [n]$ defined before definition
(\ref{defi-derived-affine-gr-scheme}).

\begin{lemma} \label{bicos-help}
Let $n>0$. Let $R^\bullet$ be an affine derived group scheme in $\Cpx(\Ab)$ over $\integers$ and let
$S^\bullet$ be a homotopy cartesian commutative $R^\bullet$-$\S$-algebra. Then the
map $(S^1)^{\otimes^\bL n} \to (S^n)^{\otimes^\bL_{R^n} n}$ induced by the maps $\alpha_0, \ldots,\alpha_{n-1}$
is an equivalence.
\end{lemma}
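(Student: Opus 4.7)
The plan is to reduce the statement to a direct calculation that combines the Segal condition for $R^\bullet$ with the cartesian hypothesis on $S^\bullet$. Since $R^\bullet$ is an affine derived group \emph{scheme}, we have $R^0 \simeq \integers$, and the first clause of Definition~\ref{defi-derived-affine-gr-scheme} then identifies $R^n$ with the absolute derived tensor product $(R^1)^{\otimes^\bL n}$, in which the $i$-th factor ($0 \le i \le n-1$) maps to $R^n$ via $\alpha_i$. In particular the ambient tensor product $(S^1)^{\otimes^\bL n}$ on the source of the map can be taken over $R^0 = \integers$.

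Next I would use the homotopy cartesian property of $S^\bullet$: applied to $\alpha_i \colon [1] \to [n]$ it gives equivalences $S^n \simeq S^1 \otimes^\bL_{R^1,\alpha_i} R^n$ for each $i$. Combining this with the Segal decomposition of $R^n$ above yields the description
$$S^n \;\simeq\; R^1 \otimes^\bL \cdots \otimes^\bL R^1 \otimes^\bL S^1 \otimes^\bL R^1 \otimes^\bL \cdots \otimes^\bL R^1,$$
with $S^1$ sitting in position $i$ and the remaining factors being $R^1$.

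Now, using the $i$-th such description on the $i$-th factor of $(S^n)^{\otimes^\bL_{R^n} n}$, I would compute by separating the tensor over $R^n \simeq (R^1)^{\otimes^\bL n}$ slot by slot. In slot $j$, exactly one of the $n$ tensor factors is $S^1$ (contributed by the $j$-th copy of $S^n$) while the other $n-1$ are $R^1$; tensoring these over $R^1$ gives $S^1$ back. Concatenating across $j = 0, \ldots, n-1$ produces $(S^1)^{\otimes^\bL n}$, and tracing through the identifications shows the composite is inverse to the canonical map induced by $\alpha_0, \ldots, \alpha_{n-1}$.

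The main obstacle is not any single step but the bookkeeping needed to make the associativity and commutativity manipulations of derived tensor products rigorous, together with verifying that the identification one obtains is the canonical map of the statement rather than some permutation of its inputs. In practice this is handled by choosing cofibrant replacements of $R^\bullet$ and $S^\bullet$ compatible with the cosimplicial structure (so that each derived tensor product can be computed strictly), or equivalently by arguing in the underlying $\infty$-categorical setting where the symmetric monoidal rearrangements are formal.
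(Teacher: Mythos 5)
Your proposal is correct and is essentially the same argument as the paper's proof: both use the Segal identification $R^n \simeq (R^1)^{\otimes^\bL n}$, the cartesian equivalences $S^n \simeq S^1 \otimes^\bL_{R^1,\alpha_i} R^n$, and the slot-by-slot computation of the derived tensor product over $(R^1)^{\otimes^\bL n}$. The paper packages the bookkeeping you flag slightly more cleanly, by introducing $S_i := S^1 \otimes^\bL_{R^1,\beta_i} (R^1)^{\otimes^\bL n}$ and factoring the canonical map through $S_1 \otimes^\bL_{R'} \cdots \otimes^\bL_{R'} S_n$, so that it is exhibited directly as a composite of equivalences rather than by producing an inverse.
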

\begin{proof}
We let $R'=(R^1)^{\otimes^\bL n}$ and $\beta_i \colon R^1 \to R'$ the $i$-th inclusion,
$1 \le i \le n$. Let $S_i$ be the push forward of $S^1$ with respect to $\beta_i$.
Then the map $$(S^1)^{\otimes^\bL n} \to S_1 \otimes^\bL_{R'} \cdots \otimes^\bL_{R'} S_n$$
induced by the natural maps $S^1 \to S_i$ is an equivalence.

There are maps $S_i \to S^n$ induced by the maps $(\alpha_{i-1})_* \colon S^1 \to S^n$
lying over the map $(R^1)^{\otimes^\bL n} \to R^n$ induced by the $\alpha_0, \ldots, \alpha_{n-1}$.

The latter map is an equivalence since $R^\bullet$ is an affine derived group scheme.
Thus the induced map $$S_1 \otimes^\bL_{R'} \cdots \otimes^\bL_{R'} S_n \to
(S^n)^{\otimes^\bL_{R^n} n}$$ is also an equivalence. This shows the claim.
\end{proof}

We keep the notations from this paragraph.
We assume now that $B^\bullet$ represents
an affine derived group scheme over $\unit$, i.e. fulfills
the conditions of
definition \ref{defi-derived-affine-gr-scheme}.
Then $C^\bullet$ also represents an affine derived group scheme in $\Mod(A^\bullet)$.

We denote by $D^\bullet$ the diagonal of the bicosimplicial commutative $\S$-algebra
underlying $C^\bullet$.

\begin{lemma}
The cosimplicial commutative $\S$-algebra $D^\bullet$ in
$\Cpx(\Ab)$ is an affine derived group scheme over $\integers$.
\end{lemma}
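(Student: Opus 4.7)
The plan is to verify directly the three conditions of Definition~\ref{defi-derived-affine-gr-scheme} for the cosimplicial commutative $\S$-algebra $D^\bullet$ by reducing them factorwise to the corresponding conditions for $B^\bullet$ and for the cosimplicial algebra $A^\bullet$ associated to $\bG_m$, which is itself an affine derived group scheme over $\integers$ (its Segal map is the tautological isomorphism $\integers[z^{\pm 1}]^{\otimes n} \xrightarrow{\cong} A^n = \integers[z_1^{\pm 1},\ldots,z_n^{\pm 1}]$, and since $A^1$ is free over $\integers$ the ordinary tensor product computes the derived one).

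The first step is to identify the underlying complex of $D^n$. Since $j$ factors through strictly cartesian $A^\bullet$-modules, the value of $j(B^n) \in \Mod(A^\bullet)$ at cosimplicial level $m$ is $(B^n)^\sharp \otimes A^m$, where $(B^n)^\sharp = \bigoplus_r B^n(r)$ denotes the underlying ungraded complex of the Adams-graded object $B^n$. Hence $C^{n,m} = (B^n)^\sharp \otimes A^m$, and restricting to the diagonal yields $D^n \simeq (B^n)^\sharp \otimes A^n$; again, flatness of $A^n$ over $\integers$ makes this a model for the derived tensor product. The cosimplicial structure on $D^\bullet$ is the restriction along $\bigtriangleup \to \bigtriangleup \times \bigtriangleup$ of the bicosimplicial structure on $C^{\bullet,\bullet}$, so each coface $\alpha_i^D$ factors as $(\alpha_i^B)^\sharp \otimes \id_{A^1}$ followed by the $A^\bullet$-module coface $\alpha_i^{j(B^n)}$ (the latter coinciding with $\id \otimes \alpha_i^A$ except when $d_0^A$ intervenes, in which case it is twisted by the comodule action coming from the Adams grading of $B^n$).

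Now I would check the three conditions. For the unit condition, $B^0 \simeq \unit$ in $\Cpx(\Ab)^\integers$ and $A^0 = \integers$, so $D^0 \simeq (B^0)^\sharp \otimes A^0 \simeq \integers$. For the Segal condition, using the symmetry of the monoidal structure of $\Cpx(\Ab)$ one rearranges
$$(D^1)^{\otimes^\bL n} \;\simeq\; ((B^1)^\sharp)^{\otimes^\bL n} \otimes (A^1)^{\otimes^\bL n} \;\xrightarrow{\ \sim\ }\; (B^n)^\sharp \otimes A^n \;\simeq\; D^n,$$
where the middle arrow is an equivalence in each factor: by the Segal condition for $B^\bullet$ (which survives forgetting the Adams grading, since $j_1$ and hence the functor $(-)^\sharp$ is symmetric monoidal) and by the classical Segal condition for $A^\bullet$. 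An entirely analogous decomposition of $D^1 \otimes^\bL_{s,D^0,s} D^1 \to D^2$ (noting that $D^0 \simeq \integers$ so the relative tensor reduces to the absolute one, as remarked after the definition) reduces the group-like condition to the group-like conditions for $B^\bullet$ and for $\bG_m$.

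The main obstacle is bookkeeping: one must verify that under the identification $D^n \simeq (B^n)^\sharp \otimes A^n$ the Segal and group-like structure maps really decompose tensorially as claimed, despite the twist introduced by the comodule action when $d_0^A$ is involved. Because the comodule structure on $j(B^n)$ is itself induced by the Adams grading of $B^n$ and $B^\bullet$ is already a cosimplicial Adams-graded algebra, the two cosimplicial structures are compatible by construction, and the decomposition is routine once the symmetry isomorphisms of the tensor product and the explicit face-map formulas of $A^\bullet$ are unwound.
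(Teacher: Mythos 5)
Your overall strategy---verify the three axioms of Definition~\ref{defi-derived-affine-gr-scheme} by separating the contribution of $B^\bullet$ from that of $A^\bullet$---is the same as the paper's, but the paper executes it more abstractly: it factors each diagonal map $D^{\alpha_i}=C^{\alpha_i,\alpha_i}$ through the two cosimplicial directions and treats the second (the $A^\bullet$-) direction via Lemma~\ref{bicos-help}, which says that for any homotopy cartesian $R^\bullet$-algebra $S^\bullet$ over an affine derived group scheme $R^\bullet$ the map $(S^1)^{\otimes^\bL n}\to (S^n)^{\otimes^\bL_{R^n}n}$ is an equivalence. You instead write out $D^n=(B^n)^\sharp\otimes A^n$ explicitly and try to recognize the Segal map of $D^\bullet$ as the tensor product of the Segal maps of $(B^\bullet)^\sharp$ and of $A^\bullet$. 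This is a genuinely different, more computational route.

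There is a gap in the computational route as you have written it: the Segal map of $D^\bullet$ does \emph{not} decompose tensorially under the symmetry identification $(D^1)^{\otimes^\bL n}\cong((B^1)^\sharp)^{\otimes^\bL n}\otimes(A^1)^{\otimes^\bL n}$. The coface $d_0$ of the $A^\bullet$-module $j(B^n)$ involves the coaction $b\mapsto b\otimes z^{r}$ ($r$ the Adams degree of $b$), and $\alpha_i$ is a composite involving $i$ instances of $d_0$. Unwinding gives, already for $n=2$,
\begin{equation*}
(m_1\otimes z^{e_1})\otimes(m_2\otimes z^{e_2})\;\longmapsto\;
d_2^B(m_1)\,d_0^B(m_2)\,\otimes\, z_1^{\,e_1+r_2}\,z_2^{\,e_2}\;,
\end{equation*}
where $r_2$ is the Adams degree of $m_2$; the tensor product of the two Segal maps would give $z_1^{e_1}z_2^{e_2}$ without the extra $z_1^{r_2}$. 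So the two maps differ by a twist that multiplies the $(A^1)^{\otimes n}$ coordinate by the unit $z_1^{r_2+\cdots+r_n}z_2^{r_3+\cdots+r_n}\cdots z_{n-1}^{r_n}$ determined by the multi-Adams-degree of the $(B^1)^\sharp$-factor. Your sentence ``the decomposition is routine once the symmetry isomorphisms \ldots\ are unwound'' is therefore not correct as stated; what is true is that the Segal map for $D^\bullet$ equals (symmetry shuffle) $\circ$ (twist) $\circ$ (tensor of the two Segal maps), and the twist is an \emph{automorphism} of the source because it multiplies by a unit on each multihomogeneous piece. Once you record that observation, your conclusion that the Segal map is an equivalence follows; the same remark applies to the group-like condition. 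The paper avoids this bookkeeping entirely by never reducing to an explicit basis and instead appealing to the abstract Lemma~\ref{bicos-help}. So: right strategy, fixable gap---you need to identify the twist coming from the $\bG_m$-coaction and argue explicitly that it is an automorphism, rather than asserting that it is absorbed by the tensorial decomposition.
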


\begin{proof}
We denote by $C^{n,k}$ the entry in cosimplicial degree $k$ of the $A^\bullet$-algebra
$C^n$. Since $B^\bullet$ is an affine derived group scheme we know
that the map $$C^{1,k} \otimes^\bL_{A^k} \cdots \otimes^\bL_{A^k} C^{1,k} \to C^{k,k}$$
induced by the maps $\alpha_0, \ldots, \alpha_{k-1}$ in the first cosimplicial direction
is an equivalence.

Furthermore by lemma (\ref{bicos-help}) the map
$$(C^{1,1})^{\otimes^\bL k} \to (C^{1,k})^{\otimes^\bL_{A^k} k}$$
induced by the $\alpha_i$ in the second cosimplicial direction
is an equivalence. This establishes that also the composite map
$$(C^{1,1})^{\otimes^\bL k} \to C^{k,k}$$ induced by the $\alpha_i$ in the
diagonal cosimplicial direction is an equivalence.
This shows the first property necessary for being an affine derived group scheme.

The proof for the second condition for being an affine derived group
scheme is similar:

As in lemma (\ref{bicos-help}) it follows that the map
$$C^{1,1} \otimes^\bL C^{1,1} \to C^{1,2} \otimes^\bL_{A_2} C^{1,2}$$
induced by the maps $c,\alpha_0$ in the second cosimplicial direction
is an equivalence since $A^\bullet$ satisfies the second condition
of being an affine derived group scheme
and since $C^{1,\bullet}$ is homotopy cartesian over $A^\bullet$.

Furthermore the map
$$C^{1,2} \otimes^\bL_{A_2} C^{1,2} \to C^{2,2}$$
induced by  $c,\alpha_0$ in the first cosimplicial direction
is an equivalence since $B^\bullet$ satisfies the second condition
of being an affine derived group scheme.

Again the composite of these two maps yields the map which the second
condition of being an affine derived group scheme requests to be
an equivalence.

The third condition that the map $\integers \to D^0$ is an equvalence is satisfied
since $A^0=\integers$ and the map $\unit \to C^0$ is an equivalence.

This finishes the proof of the claim.
\end{proof}

Morally we can think of $D^\bullet$ as the derived affine
group scheme
\begin{equation} \label{semi-direct}
B^\bullet \rtimes \mathbb{G}_m.
\end{equation}

We will use this notation in the section on examples.

\begin{proposition} \label{proposition:diag-equiv}
The restriction functor $\D(C^\bullet)_\cart \to \D(D^\bullet)_\cart$
is an equivalence of categories.
\end{proposition}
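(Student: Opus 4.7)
The plan is to apply Corollary \ref{corollary:diag-equiv} to the bicosimplicial diagram of module categories associated to $C^\bullet$. More precisely, I would first construct a pseudo-functor
$$F \colon \bigtriangleup \times \bigtriangleup \to \ModCat, \qquad (n,m) \mapsto \Mod(C^{n,m}),$$
where the transition along a morphism $(n,m) \to (n',m')$ in $\bigtriangleup \times \bigtriangleup$ is given by extension of scalars along the corresponding algebra map $C^{n,m} \to C^{n',m'}$. Each $\Mod(C^{n,m})$ carries a cofibrantly generated model structure (cf.\ the discussion in section (\ref{prelim}) and \cite{km}, \cite{mandell.flat}), and base change along an algebra map is left Quillen. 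Hence this defines a pseudo-functor into $\ModCat$ as required, and the projective model structure on $\Sect(F)$ exists by cofibrant generation.

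Next, I would identify the two section categories with the categories of modules in question. By definition $\Mod(C^\bullet)$ is precisely $\Sect(F)$: a $C^\bullet$-module consists of $C^{n,m}$-modules $M^{n,m}$ together with coherent transition maps linear over the structure maps. Similarly the composition $F' := F \circ \mathrm{diag} \colon \bigtriangleup \to \ModCat$ is exactly $[n] \mapsto \Mod(D^n)$, so $\Sect(F') \simeq \Mod(D^\bullet)$. Under these identifications the cartesian objects on the left are precisely $\D(C^\bullet)_\cart$ (bicosimplicial homotopy cartesian) and on the right $\D(D^\bullet)_\cart$, and the restriction functor from the proposition is the canonical functor of the corollary.

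Having set up the diagram, the claim is then immediate from Corollary (\ref{corollary:diag-equiv}): the diagonal $\mathrm{diag}\colon \bigtriangleup \to \bigtriangleup \times \bigtriangleup$ is homotopy left cofinal by Corollary (\ref{incl-cofinal}), and so by Lemma (\ref{cart-equiv}) the induced adjunction restricts to mutually inverse equivalences between the categories of cartesian objects. I expect the main obstacle to be bookkeeping rather than substance: one must verify that the extension-of-scalars functors in the two cosimplicial directions compose (up to coherent isomorphism) to the extension-of-scalars functors along the structural maps of $D^\bullet$, so that the restriction functor coming from diagonal embedding of indexing categories is literally the restriction-of-scalars functor from $C^\bullet$-modules to $D^\bullet$-modules. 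Once this compatibility is in place, the proof is essentially formal and consists in applying the already-established cofinality machinery.
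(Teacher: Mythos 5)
Your argument is exactly the one the paper uses: Proposition (\ref{proposition:diag-equiv}) is deduced directly from Corollary (\ref{corollary:diag-equiv}), and your proposal just spells out the identification of $\Mod(C^\bullet)$ and $\Mod(D^\bullet)$ with the section categories of the bicosimplicial diagram of model categories and its diagonal restriction, which the paper leaves implicit. This is correct and is the same route, only more explicit.
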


\begin{proof}
This follows form corollary (\ref{corollary:diag-equiv}).
\end{proof}

\begin{theorem} \label{main-transfer-thm}
There is a natural equivalence of tensor triangulated categories
$\D(B^\bullet)_\cart \to \D(D^\bullet)_\cart$. It restricts to an equivalence
$\Perf(B^\bullet) \to \Perf(D^\bullet)$.
\end{theorem}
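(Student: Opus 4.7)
The plan is to obtain the equivalence by concatenating two results already established in the previous section. Corollary \ref{transfer-corollary} supplies a tensor triangulated equivalence $\bL j_B \colon \D(B^\bullet)_\cart \xrightarrow{\sim} \D(C^\bullet)_\cart$, where $C^\bullet = j(B^\bullet)$ is regarded as a bicosimplicial commutative $\S$-algebra in $\Cpx(\Ab)$. Proposition \ref{proposition:diag-equiv} then supplies an equivalence $\D(C^\bullet)_\cart \xrightarrow{\sim} \D(D^\bullet)_\cart$ given by restriction along the diagonal. I would simply compose these two functors to obtain the asserted equivalence $\D(B^\bullet)_\cart \to \D(D^\bullet)_\cart$. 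The tensor structure is preserved because the tensor product of (bi)cosimplicial modules is computed levelwise and therefore commutes with restriction to the diagonal; the tensor compatibility of $\bL j_B$ is part of Corollary \ref{transfer-corollary}.

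For the restriction to perfect objects, I would trace the definition of perfection through each stage. By definition, $M^\bullet \in \D(B^\bullet)_\cart$ is perfect iff $M^0$ is a perfect $B^0$-module in $\D(\Ab)^\integers$. The image $\bL j_B(M^\bullet)$ has, in the first cosimplicial direction at level $0$, the object $j(M^0)$ viewed as a cartesian $C^0 = j(B^0)$-module in $A^\bullet$-modules. Applying Corollary \ref{j-R-equiv} with $R=B^0$, the condition that $M^0$ be perfect over $B^0$ translates to the condition that $j(M^0)$ be perfect in the sense that its underlying $(0,0)$-entry is a perfect $C^{0,0}$-module. Restricting to the diagonal yields a cosimplicial $D^\bullet$-module whose $0$-th entry is precisely this $C^{0,0}$-module, and since $D^0 = C^{0,0}$, this is exactly the defining condition for membership in $\Perf(D^\bullet)$. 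The argument is reversible, showing that the equivalence both preserves and reflects perfection.

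The main point requiring care is the bookkeeping of ``perfection'' across three different module categories (over $B^\bullet$, the bicosimplicial $C^\bullet$, and the diagonal $D^\bullet$), but once Corollary \ref{j-R-equiv} is applied to $B^0$ this identification is direct. No further input is needed: the theorem is essentially a formal consequence of Corollary \ref{transfer-corollary} and Proposition \ref{proposition:diag-equiv}.
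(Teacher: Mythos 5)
Your proof is correct and follows the same route as the paper's: the paper's proof of the theorem is literally the one-line observation that one composes Corollary \ref{transfer-corollary} with Proposition \ref{proposition:diag-equiv}, leaving the restriction to perfect objects implicit (it is a formal consequence of the equivalences being tensor triangulated and of the characterization of $\Perf$ by the level-zero term). Your explicit tracking of the perfection condition through $B^0$, $C^{0,0}$, and $D^0 = C^{0,0}$ via Corollary \ref{j-R-equiv} is a useful explication of that implicit step, but does not alter the substance of the argument.
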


\begin{proof}
This combines corollary (\ref{transfer-corollary}) and proposition (\ref{proposition:diag-equiv}).
\end{proof}

We are going to describe the above result in the case of algebras
over a given commutative ring $R$. 

Set $A_R^\bullet=A^\bullet \otimes R$. Let $B^\bullet$ be an affine derived group scheme
in $\Cpx(R)^\integers$. Let $C^\bullet=j(B^\bullet)$. So $C^\bullet$ is a derived affine
group scheme in $\Mod(A_R^\bullet)$. In particular it is a bicosimplicial commutative
$\S$-algebra in $\Cpx(R)$. Let $D^\bullet$ be its diagonal.

\begin{theorem} \label{R-main-transfer-thm}
There is a natural equivalence of tensor triangulated categories
$\D(B^\bullet)_\cart \to \D(D^\bullet)_\cart$. It restricts to an equivalence
$\Perf(B^\bullet) \to \Perf(D^\bullet)$.
\end{theorem}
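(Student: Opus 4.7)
The plan is to run the proof of Theorem \ref{main-transfer-thm} verbatim, observing that both of its ingredients — Corollary \ref{transfer-corollary} and Proposition \ref{proposition:diag-equiv} — generalize to the $R$-linear setting without any change in content. In fact, neither ingredient depends in an essential way on working over the integers.

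First I would establish the $R$-linear analogue of Corollary \ref{transfer-corollary}: the functor $\bL j_B \colon \D(B^\bullet)_\cart \to \D(C^\bullet)_\cart$ is an equivalence of tensor triangulated categories. The point is that Corollary \ref{j-R-equiv} is already stated for an arbitrary commutative $\S$-algebra $R$ in $\Cpx(\Ab)^\integers$; applying it cosimplicial-level-wise to the commutative $\S$-algebra $B^\bullet$ (which, since $R \in \Cpx(\Ab)^\integers$, is in particular a commutative $\S$-algebra in $\Cpx(\Ab)^\integers$) and observing that the adjoint functors $\D(B^n) \rightleftarrows \D(C^n)_\cart$ commute with the restriction functors associated to morphisms $[n] \to [m]$ in $\bigtriangleup$, we obtain the desired equivalence on cartesian modules exactly as in the proof of Corollary \ref{transfer-corollary}.

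Second, I would invoke the $R$-linear version of Proposition \ref{proposition:diag-equiv}, namely that restriction to the diagonal yields an equivalence $\D(C^\bullet)_\cart \simeq \D(D^\bullet)_\cart$. This is formal and base-ring-independent: Corollary \ref{corollary:diag-equiv} is stated for an arbitrary diagram $F \colon \bigtriangleup \times \bigtriangleup \to \ModCat$ of model categories and rests only on the homotopy left cofinality of $\mathrm{diag} \colon \bigtriangleup \to \bigtriangleup \times \bigtriangleup$ (Corollary \ref{incl-cofinal}), which has nothing to do with $R$. Composing the two equivalences yields a natural tensor triangulated equivalence $\D(B^\bullet)_\cart \simeq \D(D^\bullet)_\cart$.

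It remains to see that this equivalence restricts to $\Perf(B^\bullet) \simeq \Perf(D^\bullet)$. Since both equivalences are symmetric monoidal and send the respective tensor units $B^\bullet$, $C^\bullet$, $D^\bullet$ to one another, they send the triangulated subcategory generated by the unit to the corresponding subcategory on the target; by the argument of Proposition \ref{true-perf} (applied now with $B^\bullet$ replaced by $D^\bullet$, whose $D^1 \simeq C^{1,1}$ is of Tate type over $R$ because $B^1$ is an $R$-algebra and the $A^\bullet$ factor contributes only the $\bG_m$-weight grading), these generated subcategories coincide with $\Perf(B^\bullet)$ and $\Perf(D^\bullet)$ respectively. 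The only step that needs attention is checking that perfectness of $M^0$ over $B^0$ matches perfectness of the corresponding $N^0$ over $D^0$; since $D^0 \simeq B^0$ (as both absorb only the trivial $\bG_m$-level $A^0 = R$), this is immediate by tracing through the identifications.
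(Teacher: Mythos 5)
Your first two paragraphs reproduce the paper's argument exactly: the paper's own proof of this theorem is precisely the remark that one combines the $R$-analogues of Corollary \ref{transfer-corollary} and Proposition \ref{proposition:diag-equiv}, and you correctly identify why each analogue holds (namely that \ref{j-R-equiv} is already stated over an arbitrary commutative $\S$-algebra in $\Cpx(\Ab)^\integers$ and that the cofinality statement \ref{corollary:diag-equiv} is formal for any diagram of model categories and hence coefficient-independent).

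Your third paragraph goes beyond what the paper writes out, but contains a misstep worth correcting. You invoke Proposition \ref{true-perf} with $D^\bullet$ in place of $B^\bullet$, claiming $D^1 \simeq C^{1,1}$ is ``of Tate type over $R$''. But $D^\bullet$ is a cosimplicial algebra in $\Cpx(R)$ carrying no Adams grading, whereas Tate type is by definition a condition on an Adams-graded algebra (vanishing in positive Adams degrees, unit in Adams degree $0$), so the hypothesis of \ref{true-perf} cannot even be formulated for $D^\bullet$ as stated. Likewise $D^0$ is not $B^0$: the object $B^0 \simeq \unit_R$ lives in $\Cpx(R)^\integers$, while $D^0 \simeq R$ lives in $\Cpx(R)$. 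The more direct argument is the one you only sketch in your final sentence: the composed equivalence $\D(B^\bullet)_\cart \to \D(D^\bullet)_\cart$ is induced level-wise, and at cosimplicial level $0$ it sends the Adams-graded complex $M^0$ to its underlying $R$-complex $\bigoplus_r M^0(r)$. This sum is perfect over $R$ iff only finitely many $M^0(r)$ are non-contractible (otherwise the total cohomology fails boundedness or finite generation) and each is then perfect as a direct summand of a perfect complex — which is precisely the condition for $M^0$ to be perfect over $B^0 \simeq \unit_R$. That level-$0$ check is all that is needed; \ref{true-perf} need not, and cannot directly, be invoked for $D^\bullet$.
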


\begin{proof}
This combines the $R$-analogues of
corollary (\ref{transfer-corollary}) and proposition (\ref{proposition:diag-equiv}).
\end{proof}

\begin{proof}[Proof of theorem (\ref{main-thm})]
If the characteristic of $k$ is $0$
then by \cite[Corollary 6.9]{spitzweck.per} there exists a commutative $\S$-algebra
$A$ in $\Cpx(\Ab)^\integers$ such that $\D(A)$ is naturally equivalent
to $\DMT(X)$ as tensor triangulated category.
In general this follows from the fact that over any scheme which lives
over a field the Eilenberg MacLane spectrum $\MZ$ is strongly
periodizable in the language of \cite{spitzweck.per}. This follows
from \cite[Corollary 6.3]{spitzweck.per} for perfect fields, in general
one uses the fact that there is a map of $E_\infty$-ring spectra
from the pullback of the Eilenberg MacLane spectrum over the prime field
to the Eilenberg MacLane spectrum over the given base.

Now $\MZ$ receives a canonical map from
the push forward of the topological Eilenberg MacLane spectrum. This
implies that there is an $\S$-algebra 
$A$ in $\Cpx(\Ab)^\integers$ such that $\D(A)$ is naturally equivalent
to $\DMT(X)$ as tensor triangulated category by a representation
theorem similar to \cite[Corollary 6.9]{spitzweck.per}.

The equivalence $\DMT(X) \simeq \D(A)$ restricts to an equivalence $\DMT_\gm(X) \simeq \Perf(A)$.
Note that $A$ is of bounded Tate type by our assumptions on $\DMT(X)$. 
We replace $A$ by the canonical algebra of strict Tate type and denote
it again by $A$. Factor the canonical augmentation
$A \to \unit$ into a cofibration $A \to B$ followed by a weak equvalence
$B \to \unit$. Let $A \to B^\bullet$ the coaugmented cosimplicial algebra assciated
to $A \to B$. Then by proposition (\ref{cech-der-equiv}) $B^\bullet$ is
an affine derived group scheme over $\unit$. Theorem (\ref{perf-descent-thm})
gives us an equivalence of tensor triangulated categories
$\Perf(A) \simeq \Perf(B^\bullet)$.

We next apply the procedure of this paragraph. So let $C^\bullet=j(B^\bullet)$
and let $D^\bullet$ be the diagonal of $C^\bullet$. Then theorem
(\ref{main-transfer-thm}) gives an equivalence of tensor
triangulated categories $\Perf(B^\bullet) \simeq \Perf(D^\bullet)$.
So $D^\bullet$ is our looked for affine derived group scheme with the property
$\DMT_\gm(X) \simeq \Perf(D^\bullet)$.

For the case with $R$-coefficients observe first that there is an
equivalence $$\DMT_\gm(X)_R \simeq \Perf(A_R).$$ Next theorem (\ref{R-perf-equiv})
gives the equivalence $\Perf(A_R) \simeq \Perf(B_R^\bullet)$. Theorem
(\ref{R-main-transfer-thm}) yields $\Perf(B_R^\bullet) \simeq \Perf(D_R^\bullet)$
which gives the conclusion.
\end{proof}

\begin{proof}[Proof of theorem (\ref{R-main-thm})]
The proof goes along the same lines as the proof of theorem (\ref{main-thm})
using remark (\ref{RR'-remark}).
\end{proof}

\begin{proof}[Proof of theorem (\ref{main1-thm})]
As in the proof of theorem (\ref{main-thm}) we can find a Tate-algebra
$A$ in $\Cpx(\Q)^\integers$ such that $\DMT(X)_\Q \simeq \D(A)$
and $\DMT_\gm(X)_\Q \simeq \Perf(A)$. Now we claim that we can write
$A$ as a filtered (homotopy) colimit of {\em bounded} Tate-algebras,
$A \simeq \colim_i A_i$. That granted we apply the procedure used 
in the proof of theorem (\ref{main-thm}) to get affine derived group schemes
$D_i^\bullet$ such that $\Perf(A_i) \simeq \Perf(D_i^\bullet)$.
The $D_i^\bullet$ can be defined in such a way that they depend functorially
on $i$. In this way we get a pro affine derived group scheme
$\text{``$\lim_i$''} D_i^\bullet$. Noting that $\Perf(A) \simeq \text{$2$-$\colim$}_i \Perf(A_i)$
we get equivalences $$\Perf(\text{``$\lim_i$''} D_i^\bullet)=\text{$2$-$\colim$}_i \Perf(D_i^\bullet)$$
$$\simeq \text{$2$-$\colim$}_i \Perf(A_i) \simeq \Perf(A) \simeq \DMT_\gm(X)_\Q$$
which was to be shown.

It remains to prove that we can write $A$ as a filtered (homotopy) colimit of
bounded Tate algebras. We give two procedures. The first one (mentioned in
the introduction) writes $A$ as a filtered (homotopy) colimit of finite type cell
Tate-algebras. So we have to prove that such a a finitely generated Tate-algebra is bounded.
Here we use that we work over $\Q$. In this case we can work with strictly commutative
(Adams graded) DGA's. We approximate $A$ by finite cell algebras where the cells are
attached in negative Adams degrees. It is easily seen that such an algebra is of bounded
Tate type (forgetting the differential it is free on finitely many generators in negative Adams degree). 

The second way to approximate $A$ (which we can assume to be of strict Tate type)
is to truncate $A$ in the following ways: For $i \in \naturals_{> 0}$, $k<0$,$n \in \integers$ we set
$A_i(k)^n=0$ if $n < ik$ and $A_i(k)^n=A(k)^n$ otherwise. Then the $A_i$ form subalgebras
of $A$ such that $A=\colim_i A_i$, and each $A_i$ is bounded.

This finishes the proof.
\end{proof}

\begin{remark}
The two strategies employed in the above proof to write a Tate-algebra as a filtered (homotopy)
colimit of bounded Tate-algebras does not work for Tate algebras over the integers.
For the first method we note that a finite cell algebra with generators in negative
Adams degrees will in general not be bounded because of the contributions of the homology
of the symmetric groups. The second method fails since the truncations considered will
in general not be $\S$-algebras since the involved $E_\infty$-operad lives in
negative cohomological degrees.
\end{remark}

 \section{Examples} \label{section:examples}

Our general theorem (\ref{main1-thm}) applies to all smooth schemes over fields
with rational coefficients. In its form it deals with representation categories
of pro affine derived group schemes.
In this section we focus on examples of theorems (\ref{main-thm}) and (\ref{R-main-thm})
which discuss situations where Tate motives can be modelled as representation
categories of affine derived group schemes.

We assume the reader is familiar with the constructions of
affine derived group schemes starting with an Adams graded commutative $\S$-algebra
in complexes with an augmentation.
Below we will always apply theorems (\ref{perf-descent-thm}), (\ref{R-perf-equiv}),
(\ref{RR-C-D}), remark (\ref{RR'-remark}) and theorems (\ref{main-transfer-thm}),
(\ref{R-main-transfer-thm}).

\subsection{First examples}

Let $k$ be a field of characteristic $0$ and
$A$ the algebra in $\Cpx(k)^\integers$ where $A(0)=S^0(k)$,
$A(-1)=S^{-1}(k \oplus k)$, and all other complexes are equal to $0$. 
That is the typical cohomology algebra of $\mathbf{CP}^1 \setminus \{0,1,\infty\}$
where we put the generators for $H^1$ in Adams degree $-1$.

The algebra $B^1=k \otimes_A^\bL k$ sits completely in cohomological
degree $0$, and the Hopf algebra structure determines the pro-unipotent
group scheme with $\mathbb{G}_m$-action which corresponds to the completed
free Lie algebra over $k$ on two generators in Adams degree $1$.

If we put $A=k[x]/(x^{n+1})$, where $x$ sits in cohomological degree $2$ and Adams
degree $-1$, then $A$ recovers the $k$-cohomology of $\mathbf{CP}^n$
where we put the generator in cohomological degree $2$ into Adams degree $-1$.
In this case the algebra $B^1=k \otimes_A^\bL k$ sits in various cohomological
degrees.

In the case $A=k[x]$ with the bidegree of $x$ as above we have that $B^1=k \otimes_A^\bL k$
has a single generator in cohomological degree $1$ and Adams degree $-1$.

In these cases our representation theorem applies
and we have $$\Perf(A)= \Perf(B^\bullet \rtimes \mathbb{G}_m).$$
Here $B^\bullet \rtimes \mathbb{G}_m$ is to be understood
as in notation (\ref{semi-direct}) from section (\ref{transfer}).

Since these algebras are base changes from algebras over $\unit \in \Cpx(\Ab)^\integers$
our representation theorems apply to their versions over an arbitrary coefficient ring.
In particular we have their versions with $\integers$-coefficients.

\subsection{Finite fields}

Next we give examples for base schemes and coefficients where our theorems
(\ref{main-thm}) and (\ref{R-main-thm}) apply.

First let $k$ be a finite field with $q=p^e$ elements. Let $R=\integers[\frac{1}{p}]$.
Using the Bloch-Kato conjecture Levine computed the motivic cohomology of $k$
(\cite[Remark 14.11]{levine.mot-coh}):

$$\begin{array}{lcl}
\Hom_{\DM(k)_R}(R(0),R(0)) & = & R, \\
\Hom_{\DM(k)_R}(R(0),R(n)[1]) & = & R/(q^n-1)R, \;\; n\ge 1,
\end{array}$$

and all other bi-homs are equal to $0$.
Thus theorem (\ref{R-main-thm}) applies and we have a derived motivic
fundamental group $B^\bullet$ over $R$ such that $\DMT_\gm(k)_R \simeq \Perf(B^\bullet)$.

It is tempting to compute $B^1$ in this simple case.
However it is unclear what the multiplication $A \otimes^\bL_R A \to A$,
$A$ the algebra of bounded Tate type modelling the Tate motives, does
on the Tor-terms.

\subsection{Number fields}

Next let $k$ be a number field. Levine's computation (\cite[Remark 14.11]{levine.mot-coh})
in this case gives an isomorphism
$$\Hom_{\DM(k)}(\integers_{(l)}(0), \integers_{(l)}(n)[i]) \otimes_{\integers_{(l)}}
\integers_l \cong H_{\acute{e}t}^i(k,\integers_l(n))$$
for all $i \le n$.

Thus theorem (\ref{main-thm}) applies and we have a derived motivic
fundamental group $B^\bullet$ over $\integers$ such that $$\DMT_\gm(k) \simeq \Perf(B^\bullet).$$
This fundamental group is the promised integral structure on the usual (non-derived)
fundamental group for number fields.

\subsection{Finite coefficients}

We look now at the special case where the coefficients are an algebra over a
finite field, so we suppose that $R'$ is an $R=\mathbb{F}_p$-algebra, $p$ a prime.
Suppose $k$ is a field of characteristic not equal to $p$.

Then by the Bloch-Kato conjecture we have $$\Hom_{\DM(k)_R}(R(0),R(n)[i]) \cong
H_{\acute{e}t}^i(k,R(n)),$$ thus in particular the assumptions of
theorem (\ref{R-main-thm}) are fulfilled. So we have  derived motivic
fundamental group $B^\bullet$ over $R'$ such that
$$\DMT_\gm(k)_{R'} \simeq \Perf(B^\bullet).$$

\subsection{Geometric fundamental groups}

Let $R$ be an algebra fulfilling the assumptions of theorem (\ref{R-main-thm}).
Suppose $k$ gives rise to an algebra $A$ over $R$ of bounded Tate type,
i.e. that the assumptions of theorem (\ref{R-main-thm}) are fulfilled as e.g. in all previous examples.
Let $X=\PP^1_k \setminus \{0,1,\infty\}$. We denote the corresponding
algebra modelling Tate motives over $X$ by $A(X)$. Let $B^\bullet$ and
$B^\bullet(X)$ be the derived affine schemes corresponding to
$A$ and $A(X)$. Then the natural map of cosimplicial algebras
$B^\bullet \to B^\bullet(X)$ can be thought of as a map of group schemes
in the opposite direction. The kernel of this group scheme map is an affine derived group scheme
sitting completely in degree $0$. It coincides as a group scheme
with $\mathbb{G}_m$-action with our first example in this section
defined over $R$. We think of this kernel as the geometric
fundamental group of $X$.

In a similar way we can define geometric fundamental groups of other varieties
over $k$ such as $\PP^n_k$ or Grassmannians which admit stratifications
where the strata are linear varieties.

\subsection{Beilinson motives}
\label{hgfddf}

Let $\LQ$ be the Landweber spectrum over a base scheme $S$ modelled on the rationals with its
canonical $E_\infty$-structure, see \cite[section 10]{NSO1}.
Let $\DMT_{\B,\gm}(S)$ be the full triangulated subcategory of the homotopy
category of highly structured $\LQ$-modules spanned by the motivic spheres
(note that the latter homotopy category is the category of Beilinson motives
$\DM_\B(S)$ in the sense of \cite{cisinski-deglise}).
Since $\LQ$ is rational it is canonically strongly periodizable in
the sense of \cite{spitzweck.per}. Thus the representation theorem
of loc. cit. applies and we find a commutative algebra $A$ in
$\Cpx(\Q)^\integers$ such that $\DMT_{\B,\gm}(S) \simeq \Perf(A)$.
 If we suppose that $A$ is of bounded Tate type then we get
an affine derived group scheme $B^\bullet$ over $\Q$ 
such that $$\DMT_{\B,\gm}(S) \simeq \Perf(B^\bullet).$$
In general, if $A$ is of Tate type, we get a pro affine
derived group scheme $\text{``$\lim_i$''} B_i^\bullet$ such that
$$\DMT_{\B,\gm}(S) \simeq \Perf(\text{``$\lim_i$''} B_i^\bullet).$$

The first case applies for example when the base is the spectrum of $S$-integers
of a number field $\caO_S$ or $\PP^1_{\caO_S} \setminus \{0,1,\infty\}$.

In both cases the derived fundamental group sits completely in degree $0$.
In the case of the $S$-integers we recover the motivic fundamental group defined in
\cite{deligne-goncharov}. The kernel of the induced map of fundamental
groups is the geometric fundamental group defined in loc. cit. after forgetting the
action of the motivic fundamental group of $\caO_S$. To construct this action
we would have to take care of a fiber functor induced by a (tangential) base
point of $\PP^1_{\caO_S} \setminus \{0,1,\infty\}$ in our derived setting.
We did not pursue that point further in this text.

\noindent

\bibliographystyle{plain}
\bibliography{der-fundgr}

\begin{center}
Department of Mathematics, University of Oslo, Norway.

e-mail: markussp@math.uio.no
\end{center}

\end{document}